\def\norm#1{\|#1\|}
\def\RR{\mathbb{R}}
\def\R{\mathbb{R}}
\def\be{\begin{equation}}
\def\ee{\end{equation}}
\def\bea{\begin{eqnarray}}
\def\eea{\end{eqnarray}}
\def\bean{\begin{eqnarray*}}
\def\eean{\end{eqnarray*}}
\def\div{\mathrm{div}\,}
\newcommand{\abs}[2][]{\@ifnotempty{#1}{\left}|#2\@ifnotempty{#1}{\right}|}
\newcommand{\Continuous}{\mathcal{C}}
\newcommand{\dint}{\,\mathrm{d}}
\let\div\relax
\DeclareMathOperator{\div}{div}
\DeclareMathOperator*{\esssup}{ess\,sup}
\newcommand{\Frobenius}{\mathrm{F}}
\newcommand{\inner}[2]{\left\langle\,#1\,\middle|\,#2\,\right\rangle}
\newcommand{\Jacobian}{\mathsf{J}}
\newcommand{\Lebesgue}[2][]{\mathsf{L}_{#1}^{#2}}
\newcommand{\minner}[2]{\left\langle\!\left\langle\,#1\,\middle|\,#2\,\right\rangle\!\right\rangle}
\newcommand{\minnerl}[1]{\left\langle\!\left\langle\,#1\,\right|\right.\cdots}
\newcommand{\minnerr}[1]{\cdots\left.\left|\,#1\,\right\rangle\!\right\rangle}
\newcommand{\nablabf}{\mathbf{\nabla}}
\newcommand{\Sobolev}[1]{\mathsf{H}^{#1}}
\newcommand{\SobolevW}[2]{\mathsf{W}^{#1,#2}}
\newcommand{\Sup}{\infty}
\DeclareMathOperator{\tr}{tr}
\newcommand{\vdot}{\,\cdot\,}
\DeclareMathOperator{\vol}{vol}
\newcommand{\eigennorm}[2][]{\@ifnotempty{#1}{\left}\|#2\@ifnotempty{#1}{\right}\|}
\title{Uniqueness and stability result for Cauchy's equation of motion
       for a certain class of hyperelastic materials\thanks{This 
        work was supported by the German Science Foundation (Deutsche Forschungsgemeinschaft, DFG) under Schu 1978/4-1 and Schu 1978/4-2.}}
\author{A.~W\"ostehoff\thanks{Helmut Schmidt University, Department of Mechanical Engineering,
        Holstenhofweg 85, 22043 Hamburg, Germany ({\tt arne.woestehoff@hsu-hh.de}).}
        \and T.~Schuster\thanks{Saarland University, Department of Mathematics, Campus,
        66123 Saarbr\"ucken, Germany ({\tt thomas.schuster@num.uni-sb.de}), corresponding author.}}
\begin{document}

\maketitle

\begin{abstract}
We consider Cauchy's equation of motion for hyperelastic materials. The solution of this nonlinear initial-boundary value problem is the vector field which discribes the displacement which a particle of this material perceives when exposed to stress and external forces. This equation is of greatest relevance when investigating the behaviour of elastic, anisotropic composites and for the detection of defects in such materials from boundary measurements. Thus results on unique solvability and continuous dependence from the initial values are of large interest in materials research and structural health monitoring. In this article we present such a result, provided that reasonable smoothness assumptions for the displacement field and the boundary of the domain are satisfied, for a certain class of hyperelastic materials where the first Piola-Kirchhoff tensor is written as a conic combination of finitely many given tensors.
\end{abstract}

\begin{keywords} 
Cauchy's equation of motion, hyperelastic materials, uniqueness and stability, Cordes condition, Gronwall's lemma
\end{keywords}

\begin{AMS}
35A01, 35A02, 35L20, 35L70, 74B20
\end{AMS}

\pagestyle{myheadings}
\thispagestyle{plain}
\markboth{A.~W\"OSTEHOFF AND T.~SCHUSTER}{CAUCHY'S EQUATION OF MOTION FOR HYPERELASTIC MATERIALS}

\section{Introduction}

Cauchy's equation of motion follows from conservation of mass and momentum and reads as
\be\label{eq-Cauchy}
\rho (x) \ddot u(t,x) = \div P(t,x) + \rho(x) f(t,x),\qquad t\geq 0,\; x\in \Omega
\ee
where $\Omega\subset \RR^3$ is a bounded domain, $\rho (x)$ denotes the mass density, $u(t,x)$ is the vector of particle displacement, $P(t,x)$ is the first Piola-Kirchhoff stress tensor and $f(t,x)$ an external body force. It describes the discplacement that a particle in position $x\in\Omega$ at time $t$ perceives under stress $P$ and external force $f$. If we specifically investigate the behavior of elastic materials we furthermore need a constitutive law which states a connection between the stress tensor $P$ and the position $x$ as well as the deformation gradient $\Jacobian u$, i.e.
\be\label{eq-CL}   P(t,x) = \hat{P}(x,\Jacobian u (t,x)).   \ee
Actually the stress-strain law \eqref{eq-CL} characterizes elastic materials. A special class of such materials are \emph{hyperelastic} materials, where the constitutive function $\hat{P}$ can be expressed as a derivative of a stored energy function $C$,
\be\label{eq-hyper} \hat{P}(x,Y) = \nabla_Y C (x,Y),\qquad Y\in\RR^{3\times 3},\; \mathrm{det}\,Y >0.  \ee
Here, the derivative $\nabla_Y$ is to be understood componentwise. The class of hyperelastic materials comprehends isotropic materials, Mooney-Rivlin materials, neo-Hookean materials and even elastic fluids. Combining \eqref{eq-Cauchy}, \eqref{eq-CL} and \eqref{eq-hyper} yields the equation of motion for hyperelastic materials
\be\label{eq-motion-hyper} \rho (x) \ddot u(t,x) - \div \nabla_Y C\big(x,\Jacobian u (t,x)\big) = \rho (x) f(t,x).  \ee
For detailled derivations of Cauchy's equation of motion and introductions to elastic and hyperelastic materials we refer to the standard textbooks \cite{CIARLET:88,Holzapfel200003,MARSDEN;HUGHES:83} to name only a few.

Since equation \eqref{eq-motion-hyper} models the behavior of hyperelastic materials, this equation has many applications ranging from engineering to biomedical research. E.g. composite materials like carbon-fibre reinforced epoxy are of growing interest in aircraft construction or wind power stations and thus has a deep, economic impact. Dveloping autonomous structural helath monitoring (SHM) systems for such materials is a current and vivid research field to which not only engineers but also mathematicians and computer scientists contribute. Understanding the behavior of composites and developing numerical solvers for the inverse problems which arise in SHM demand for a deep analysis of (\ref{eq-motion-hyper}) equipped with appropriate initial- and boundary values, see also \cite{Giurgiutiu2008}. Existence- and uniqueness results for special cases, especially for the linearized Cauchy equation, can be found in standard references on systems of hyperbolic equations such as \cite{HOERMANDER:10,JOHN:82,Taylor199606,Wloka1982}. In \cite{Hughes1977} the authors deal with existence and uniqueness of a global solution in nonlinear elasticity and they further prove continuous dependence of the solution from initial values. The existence of weak solutions of the linearized version of \eqref{eq-motion-hyper} can also be proven by means of evolution equations, see \cite{lions1972non}. Of course this list is by far not complete. We prove a novel existence- and uniqueness result where it is important to know how the arising constants of the stability estimates depend on the underlying differential operator. This result, which is the main result of the entire article and stated in Theorem \ref{MainTheorem}, relies on a specific class of constitutive functions which are assumed to be conic combinations of fintely many, given tensors, i.e. we suppose that
\[  \hat{P}(x,Y) = \partial_Y C(x,Y) = \sum_{K=1}^N \alpha_K \div \nabla_Y C_K (x,Y),  \]
where $C_K$ and $\alpha_K\geq 0$, $K=1,\ldots,N$ are given. This setting is inspired by the article of Kaltenbacher and Lorenzi \cite{Kaltenbacher200711}. There the authors also assume such a conic combination but their results do hold for scalar displacements, contant mass density and homogeneous engery functions $C(x,Y)=C(Y)$ only, whereas our results are valid for systems of equations in \emph{any} dimension and spatially variable functions $\rho (x)$ and $C(x,Y)$. This is why we do not only consider the three-dimesnional case, even if that case might be the most prominent case in view of applications, but formulate our setting for arbitrary domains $\Omega\subset\RR^d$ with sufficiently smooth boundary and displacements in $\RR^n$. Equipped with appropriate initial- and homogeneous Dirichlet boundary values this gives the system
\begin{equation}\label{PDE}
  \rho(x)\ddot u(t,x) - \sum_{K=1}^{N}\alpha_K\div\nablabf_Y C_K\bigl(x,\Jacobian u(t,x)\bigr) = \rho(x)f(t,x)
\end{equation}
for $t\in [0,T]$ and $x\in \Omega\subset \RR^d$ along with the boundary conditions
\begin{equation}\label{BoundaryValues}
  u(t,x) = 0,\qquad t\in [0,T],\; x\in \partial\Omega
\end{equation}
and given initial values
\begin{equation}\label{InitialValues}
  \begin{split}
    u(0,\vdot)       &= u_0\in\Sobolev{2}(\Omega,R^n)\text{,}\\
	\dot{u}(0,\vdot) &= u_1\in\Sobolev{1}(\Omega,R^n)\text{.}
	\end{split}
\end{equation}
We will prove existence, uniqueness and continuous dependence from the given initial-boundary values, if $\Omega$ has a $\mathcal{C}^2$-boundary and the solution $u$ as well as the given functions $C_K$ satisfy boundedness estimates for derivatives up to the order $3$ and $4$, respectively. The assertions are stated in Theorem \ref{MainTheorem}. The crucial difficulty of the proof is to show that the constants involved to the stability estimates are uniformly bounded with respect to the coefficients $\alpha_K$.\\
The proof is performed in several steps. First we need a generalization of the Cordes condition. To this end we extend a result stated in \cite{Maugeri200004} (Section 3). The next three main steps of the proof are derivations of upper bounds of the solutions and their derivatives corresponding two different sets of initial values $(u_0,u_1)$, $(\tilde{u}_0,\tilde{u}_1)$ coefficients $\alpha_K$, $\tilde{\alpha}_K$ and forces $f$, $\tilde{f}$ which are outlined in Sections 4.1, 4.2 and 4.3. The concluding step of this extensive proof is described in Section 4.4.

%%%%%%%%%%%%%%%%%%%%%%%%%%%%%%%%%%%%%%%%%%

\section{Preliminaries and main result}

Throughout the entire article, $\Omega\subset\R^d$ denotes a bounded, open and convex domain with $\Continuous^2$-boundary and $t\in [0,T]$ is a fixed time interval with $T>0$.
Furthermore we suppose that $\rho:\Omega\to(0,\infty)$ is a function satisfying estimates
\[  \rho_{\mathrm{min}} \leq \inf_{x\in\Omega}\rho(x) \leq \sup_{x\in\Omega}\rho(x) \leq \rho_{\mathrm{max}} \]
for constants $0 < \rho_{\mathrm{min}} \leq \rho_{\mathrm{max}} < \infty$. The divergence of a function $f:[0,T]{\times}\Omega\to R^{n\times d}$ is the mapping $\div f: [0,T]{\times}\Omega\to\R^n$ defined by
\begin{equation*}
  \div f(t,x) := \Big(\sum_{j=1}^{d}\frac{\partial}{\partial x_j}f_i(t,x)\Big)_{i=1,\ldots,n}
\end{equation*}
and the Jacobian $\Jacobian u:[0,T]{\times}\Omega\to\R^{n\times d}$ of a function $u:[0,T]{\times}\Omega\to\R^n$ is
\begin{equation*}
  \Jacobian u(t,x) := \Big(\frac{\partial}{\partial x_j} u_i(t,x)\Big)_{i=1,\ldots,n,\;j=1,\ldots,d}\text{.}
\end{equation*}
The derivative with respect to time is always denoted by a dot like $\dot{u}=\partial_t u$, $\ddot{u}=\partial^2_t u$.

By $\SobolevW{2}{2}_{\gamma_0}(\Omega,\R^n)$ we denote the Sobolev space $\SobolevW{2}{2}_{\gamma_0}(\Omega,\R^n) := \Sobolev{2}(\Omega,\R^n)\cap\Sobolev{1}_0(\Omega,\R^n)$ endowed with the norm
\begin{equation*}
  \norm{\vdot}_{\SobolevW{2}{2}_{\gamma_0}(\Omega,\R^n)} := \left(\sum_{k=1}^{n}\norm{u_k}_{\SobolevW{2}{2}_{\gamma_0}(\Omega)}\right)^{1/2}
	                                                       := \left(\sum_{k=1}^{n}\int_{\Omega}\sum_{\ell=1}^{d}\sum_{j=1}^{d}
																												      \bigl(\partial_{ij}u_k(x)\bigr)^2\dint x\right)^{1/2}
\end{equation*}
turning $\SobolevW{2}{2}_{\gamma_0}(\Omega,\R^n)$ into a  Banach space whose norm is equivalent to the $\Sobolev{2}(\Omega,\R^n)$-norm.
Especially, there is a constant $\hat{K} > 0$,
such that $\norm{f}_{\Sobolev{2}(\Omega,\R^n)}\leq\hat{K}\norm{f}_{\SobolevW{2}{2}_{\gamma_0}(\Omega,\R^n)}$ for all $f\in\SobolevW{2}{2}_{\gamma_0}(\Omega,\R^n)$.

Before stating and proving the main result it is necessary to confine the nonlinearity of the PDE-system \eqref{PDE}. To this end we require for every $K$ the existence of constants $\kappa_K^{[0]}$, $\kappa_K^{[1]}$, $\mu_K^{[0]}$ and $\mu_K^{[1]}$, satisfying
\begin{equation}\label{EstimateCK}%%3.3
  \kappa_K^{[0]}\norm{Y}_{\Frobenius}^2 \leq C_K(x,Y)
	                                      \leq \mu_K^{[0]}\norm{Y}_{\Frobenius}^2
\end{equation}
and
\begin{equation}\label{EstimateDDCK}%%3.4
  \kappa_K^{[1]}\norm{H}_{\Frobenius}^2 \leq \minner{H}{\nablabf_Y\nablabf_YC_K(x,Y)H}
	                                      \leq \mu_K^{[1]}\norm{H}_{\Frobenius}^2
\end{equation}
for all $H,Y\in\R^{n\times d}$ and almost all $x\in\Omega$. Here, $\minner{A}{B} := \tr (A^TB)$ denotes the inner product of $n{\times}d$-matrices and $\tr(A)$ the trace of $A$; as is known this inner product induces the Frobenius norm $\norm{A}_{\Frobenius} := \sqrt{\minner{A}{A}}$. Furthermore we assume for any $K=1,\ldots,N$ the existence of constants $\mu_K^{[2]},\ldots,\mu_K^{[7]}$, such that the functions $C_K:\Omega\times\R^{n\times d}\to\R^{n\times d}$ and their derivatives are bounded as
\begin{align}
  \norm{\partial_{Y_{pq}}\partial_{Y_{ij}}\partial_{Y_{k\ell}}C_K}_{\Lebesgue{\infty}(\Omega\times\R^{n\times d})}                  &\leq \mu_{K}^{[2]}
	\label{Bound35}\\
	\norm{\partial_{Y_{ab}}\partial_{Y_{pq}}\partial_{Y_{ij}}\partial_{Y_{k\ell}}C_K}_{\Lebesgue{\infty}(\Omega\times\R^{n\times d})} &\leq \mu_{K}^{[3]}
	\label{Bound36}\\
	\norm{\partial_{\ell}\partial_{Y_{k\ell}}C_K}_{\Lebesgue{\infty}(\Omega\times\R^{n\times d})}                                     &\leq \mu_{K}^{[4]}
	\label{Bound37}\\
	\norm{\partial_{Y_{ij}}\partial_{\ell}\partial_{Y_{k\ell}}C_K}_{\Lebesgue{\infty}(\Omega\times\R^{n\times d})}                    &\leq \mu_{K}^{[5]}
	\label{Bound38}\\
	\norm{\partial_{\ell}\partial_{Y_{ij}}\partial_{Y_{k\ell}}C_K}_{\Lebesgue{\infty}(\Omega\times\R^{n\times d})}                    &\leq \mu_{K}^{[6]}
	\label{Bound39}\\
	\norm{\partial_{Y_{pq}}\partial_{\ell}\partial_{Y_{ij}}\partial_{Y_{k\ell}}C_K}_{\Lebesgue{\infty}(\Omega\times\R^{n\times d})}   &\leq \mu_{K}^{[7]}
	\label{Bound310}
\end{align}
for any $p,i,k,a=1,\ldots,n$ and $q,j,\ell,b=1,\ldots,d$. Additionally, let $Y\mapsto C_K(x,Y)$ be three times continuously differentiable for almost all $x\in\Omega$, and let
\begin{equation}\label{Premise311}%%(3.11)
  \partial_{Y_{ij}}\partial_\ell\partial_{Y_{k\ell}}C(x,Y) = \partial_\ell\partial_{Y_{ij}}\partial_{Y_{k\ell}}C(x,Y)
\end{equation}
for any $k,i=1,\ldots,n$ and $\ell,j=1,\ldots,d$. E.g. \eqref{Bound35}--\eqref{Premise311} are fulfilled if $C_K\in\Continuous^4(\overline{\Omega}\times\R^{n\times d})$. Finally we assume that the body force $f$ appearing on the right-hand side of \eqref{PDE} is to be an element of $\SobolevW{1}{1}\bigl((0,T),\Lebesgue{2}(\Omega,\R^n)\bigr)$, which is a the set of all $f\in\Lebesgue{1}\bigl((0,T),\Lebesgue{2}(\Omega,R^n)\bigr)$ satisfying $\dot{f}\in\Lebesgue{1}\bigl((0,T),\Lebesgue{2}(\Omega,R^n)\bigr)$ and which is equipped with the norm
\begin{align*}
  \norm{f}_{\SobolevW{1}{1}((0,T),\Lebesgue{2}(\Omega,\R^n))} &:=           \norm{f}_{\Lebesgue{1}((0,T),\Lebesgue{2}(\Omega,R^n))} +
	                                                                            \norm{\dot{f}}_{\Lebesgue{1}((0,T),\Lebesgue{2}(\Omega,R^n))}\\
	                                                            &\phantom{:}= \int_{0}^{T}\left(\int_{\Omega}\abs{f(t,x)}^2\dint x\right)^{1/2} +
															\left(\int_{\Omega}\abs{\dot{f}(t,x)}^2\dint x\right)^{1/2}\dint t\text{.}
\end{align*}

\begin{theorem}\label{MainTheorem}
Let $u$, $\tilde{u}$ be two solutions of problem (\ref{PDE}),(\ref{BoundaryValues}), (\ref{InitialValues}) corresponding to the parameters and initial values $(\alpha,u_0,u_1,f)$, $(\tilde{\alpha},\tilde{u}_0,\tilde{u}_1,\tilde{f})$, respectively. Furthermore assume that
\begin{align}
	  \begin{aligned}
	    \norm{\partial_\ell\partial_ju}_{\Lebesgue{\infty}((0,T),\Lebesgue{2}(\Omega,\R^n))}         &\leq M_0 \qquad &
			\norm{\partial_\ell\partial_j\tilde{u}}_{\Lebesgue{\infty}((0,T),\Lebesgue{2}(\Omega,\R^n))} &\leq M_0\\
			\norm{\partial_\ell\dot{u}}_{\Lebesgue{\infty}((0,T)\times\Omega)}                           &\leq M_1        &
			\norm{\partial_\ell\dot{u}}_{\Lebesgue{\infty}((0,T)\times\Omega)}                           &\leq M_1
		\end{aligned}\label{APriori316}%%(3.16)
	\intertext{and}
	  \begin{aligned}
	    \phantom{\norm{\partial_\ell\partial_ju}_{\Lebesgue{\infty}((0,T),\Lebesgue{2}(\Omega,\R^n))}}         &\phantom{\leq M_0 \qquad} &
			\phantom{\norm{\partial_\ell\partial_j\tilde{u}}_{\Lebesgue{\infty}((0,T),\Lebesgue{2}(\Omega,\R^n))}} &\phantom{\leq M_0}\\[-\baselineskip]%row for alignment according to last eq.
		  \norm{\partial_\ell\partial_j\dot{u}_k}_{\Lebesgue{\infty}((0,T)\times\Omega)}                         &\leq M_2  \qquad          &
			\norm{\partial_\ell\partial_j\dot{\tilde{u}}_k}_{\Lebesgue{\infty}((0,T)\times\Omega)}                 &\leq M_2\\
			\norm{\partial_\ell\partial_ju_k}_{\Lebesgue{\infty}((0,T)\times\Omega)}                               &\leq M_3                  &
			\norm{\partial_\ell\partial_j\tilde{u}_k}_{\Lebesgue{\infty}((0,T)\times\Omega)}                       &\leq M_3
	  \end{aligned}\label{APriori317}%%(3.17)
\end{align}
hold for any $k = 1,\ldots,n$ and all $\ell,j=1,\ldots,n$. If, in addition, the dimensions $n$ and $d$ satisfy
\begin{equation}\label{Dimensions}
	  \frac{nd-2}{nd-1}\mu < \kappa
		                     < \frac{nd}{nd-1}\mu\text{,}
\end{equation}
where $\kappa := \sum_{K=1}^{N}\alpha_K\kappa_K^{[1]}$ and $\mu := \sum_{K=1}^{N}\alpha_K\mu_K^{[1]}$,
and if there are constants $\kappa(\alpha)$ and $\mu(\alpha)$, so that $\kappa \geq \kappa(\alpha) > 0$ and $\mu \leq \mu(\alpha)$,
then there exist constants $\overline{C}_0$, $\overline{C}_1$, and $\overline{C}_2$ such that the stability estimate
\begin{align}
	  &\mathrel{\phantom{\leq}} \Bigl[\norm{(\dot{u}-\dot{\tilde{u}})(t,\vdot)}_{\Lebesgue{2}(\Omega,R^n)}^2 +
		                            \kappa(\alpha)\norm{(\Jacobian u-\Jacobian\tilde{u})(t,\vdot)}_{\Lebesgue{2}(\Omega,\R^{n\times d})}^2 \mathop{+}\nonumber\\
		&\mathrel{\phantom{\leq}} \mathop{+} \norm{(\ddot{u}-\ddot{\tilde{u}})(t,\vdot)}_{\Lebesgue{2}(\Omega,R^n)}^2 +
		                            \kappa(\alpha)\norm{(\Jacobian\dot{u}-\Jacobian\dot{\tilde{u}})(t,\vdot)}_{\Lebesgue{2}(\Omega,\R^{n\times d})}^2 \mathop{+}\nonumber\\
		&\mathrel{\phantom{\leq}} \mathop{+} \norm{(u-\tilde{u})(t,\vdot)}_{\Sobolev{2}(\Omega,\R^n)}^2\Bigr]^{1/2}\nonumber\\
		&\leq                       \overline{C}_0\left[\mu(\alpha)\norm{u_0-\tilde{u}_0}_{\Sobolev{2}(\Omega,\R^n)}^2 + \norm{u_1-\tilde{u}_1}_{\Sobolev{1}(\Omega,\R^n)}\right]^{1/2} \mathop{+}\nonumber\\
		&\mathrel{\phantom{\leq}} \mathop{+} \overline{C}_1\norm{f-\tilde{f}}_{\SobolevW{1}{1}((0,T),\Lebesgue{2}(\Omega,\R^n))} + \overline{C}_2\norm{\alpha-\tilde{\alpha}}_{\Sup}\label{MainEstimate}
\end{align}
is valid for all $t\in(0,T)$. Thereby the constants $\overline{C}_0$, $\overline{C}_1$, and $\overline{C}_2$ only depend on $T$, $M_0$, $M_1$, $M_2$, $M_3$,
\begin{align}
	  \overline{C}(\alpha) &:= \sum_{K=1}^{N}\alpha_K\mu_K^{[2]}\left(\sum_{K=1}^{N}\alpha_K\kappa_K^{[1]}\right)^{\!-1}\text{,}\label{DefOverlineC}\\
	  \mbox{and} &\\
		\hat{C}(\alpha)      &:= \frac{\hat{K}}{1-\sqrt{1-\varepsilon}}\sum_{K=1}^{N}
		                           \alpha_K\mu_K^{[1]}\left(\sum_{K=1}^{N}\alpha_K\kappa_K^{[1]}\right)^{\!-2},\nonumber
\end{align}
where $\varepsilon$ is a constant whose existence is ensured by inequality~\eqref{Dimensions}.
Moreover, the constants $\overline{C}_0$, $\overline{C}_1$, and $\overline{C}_2$ are uniformly bounded if $(M_0,M_1,M_2,M_3,\overline{C}(\alpha),\hat{C}(\alpha),T)\in \mathcal{M}$ with $\mathcal{M}\subset (0,\infty)^7$ bounded.\\[1ex]
\end{theorem}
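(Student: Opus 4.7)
The plan is to form the difference $w := u-\tilde u$, to prove three coupled inequalities governing $(\dot w,\Jacobian w)$, $(\ddot w,\Jacobian\dot w)$ and $w$ in $\Sobolev{2}$, and to close the resulting system with Gronwall's lemma in order to read off~\eqref{MainEstimate}. Subtracting the two instances of~\eqref{PDE} and rewriting, via the fundamental theorem of calculus, $\nablabf_YC_K(x,\Jacobian u)-\nablabf_YC_K(x,\Jacobian\tilde u)=A_K(t,x)\Jacobian w$ with $A_K:=\int_0^1\nablabf_Y\nablabf_YC_K(x,\Jacobian\tilde u+s\Jacobian w)\dint s$, the principal part becomes the linear divergence $\div\bigl(\sum_K\alpha_K A_K\Jacobian w\bigr)$, whose symbol is symmetric, coercive with constant $\kappa$ and bounded by $\mu$ by virtue of~\eqref{EstimateDDCK}. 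The remaining right-hand side gathers $\sum_K(\alpha_K-\tilde\alpha_K)\div\nablabf_YC_K(x,\Jacobian\tilde u)$, $\rho(f-\tilde f)$ and commutator contributions whose $\Lebesgue{\infty}$-norms are controlled by~\eqref{Bound35}--\eqref{Bound310} together with~\eqref{APriori316}--\eqref{APriori317}.

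Step~1 (Section~4.1): pair the difference equation with $\dot w$ in $\Lebesgue{2}(\Omega,\R^n)$, use~\eqref{BoundaryValues} to integrate by parts in space, and integrate in time. The inertial and divergence contributions combine to $\tfrac12\tfrac{\mathrm d}{\mathrm dt}\|\sqrt\rho\,\dot w\|_{\Lebesgue{2}}^2+\tfrac12\tfrac{\mathrm d}{\mathrm dt}\minner{\Jacobian w}{\textstyle\sum_K\alpha_K A_K\Jacobian w}$; the $t$-derivative falling on $A_K$ produces a commutator bounded, via~\eqref{Bound35}--\eqref{Bound36} and~\eqref{APriori317}, by a constant multiple of $M_2\|\Jacobian w\|_{\Lebesgue{2}}^2$. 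Using~\eqref{EstimateDDCK} to minorise the quadratic form by $\kappa(\alpha)\|\Jacobian w\|_{\Lebesgue{2}}^2$ yields an inequality of the form
\[
   \|\dot w(t)\|_{\Lebesgue{2}}^2+\kappa(\alpha)\|\Jacobian w(t)\|_{\Lebesgue{2}}^2 \le R_1(t)+c_1\int_0^t\bigl(\|\dot w\|_{\Lebesgue{2}}^2+\kappa(\alpha)\|\Jacobian w\|_{\Lebesgue{2}}^2\bigr)\dint s,
\]
in which $R_1$ collects the initial data together with the perturbations in $\alpha$ and $f$, and $c_1$ depends only on $M_0,\dots,M_3$ and $\overline C(\alpha)$.

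Step~2 (Section~4.2) differentiates the difference equation once in $t$, produces an equation for $\dot w$, and repeats the Step~1 procedure pairing now with $\ddot w$; commutators of the form $\dot A_K\Jacobian w$ and $\partial_t\nablabf_YC_K(x,\Jacobian\tilde u)$ are absorbed using $M_0,\dots,M_3$, the bounds~\eqref{Bound35}--\eqref{Bound310} and the symmetry~\eqref{Premise311}, while the $f-\tilde f$ contribution uses precisely the $\SobolevW{1}{1}((0,T),\Lebesgue{2})$-norm. The outcome is an analogous inequality for $\|\ddot w(t)\|_{\Lebesgue{2}}^2+\kappa(\alpha)\|\Jacobian\dot w(t)\|_{\Lebesgue{2}}^2$. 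Step~3 (Section~4.3) then supplies the missing $\Sobolev{2}$-bound on $w$ itself: at each fixed $t$ the difference equation is an elliptic system whose principal symbol is $\sum_K\alpha_K\nablabf_Y\nablabf_YC_K(x,\Jacobian\tilde u)$, and the dimension hypothesis~\eqref{Dimensions} is precisely the Cordes condition that, through the generalised Cordes estimate announced in Section~3, furnishes an $\varepsilon>0$ and a bound $\|w(t,\vdot)\|_{\SobolevW{2}{2}_{\gamma_0}}\le\hat C(\alpha)\|\text{RHS}\|_{\Lebesgue{2}}$; the $\Sobolev{2}$-norm equivalence and the two preceding inequalities then close an upper bound for $\|w(t,\vdot)\|_{\Sobolev{2}}^2$ in terms of the same quantities.

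Finally (Section~4.4) I would add the three inequalities, absorb the $w$-contributions arising from the lower-order part of the Cordes right-hand side using the smallness furnished by $\varepsilon$, and apply Gronwall's lemma to read off~\eqref{MainEstimate}; tracking every constant as a polynomial in $M_0,\dots,M_3$, $\overline C(\alpha)$, $\hat C(\alpha)$ and $T$ then delivers the uniform boundedness statement. I expect the main obstacle to be Step~3: the generalised Cordes estimate must be established with the \emph{precise} dependence on $(\kappa,\mu)$ encoded in $\hat C(\alpha)$, and the first-order terms $\partial_\ell\nablabf_Y\nablabf_YC_K(x,\Jacobian\tilde u)$ that arise from the $x$- and $\tilde u$-dependence of the coefficients (controlled through $\mu_K^{[4]},\dots,\mu_K^{[7]}$) have to be absorbed into the $\Sobolev{2}$-bound without spoiling the uniform dependence on $\alpha$ required for the final conclusion.
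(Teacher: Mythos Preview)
Your proposal is correct and follows the same four-step architecture as the paper: energy estimate for $v=u-\tilde u$ by testing with $\dot v$, energy estimate for $z=\dot v$ by testing with $\dot z$, Cordes-based $\Sobolev{2}$-estimate on $v(\tau,\cdot)$ at fixed time, and a Gronwall closure. One small correction to your Step~4: the loop is not closed by any ``smallness furnished by $\varepsilon$''---the $\Sobolev{2}$-bound from Step~3 contains $\|\dot z(\tau,\cdot)\|_{\Lebesgue{2}}$, whose Step~2 estimate carries a term $\int_0^\tau\|v(s,\cdot)\|_{\Sobolev{2}}\dint s$, and it is a further Gronwall application (not absorption) that resolves this coupling.
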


The principal techniques to prove this theorem take advantage of a lemma by Gronwall on the first hand and use a generalization of a known result by Maugeri, Palagachev and Softova in \cite{Maugeri200004}, the so-called \emph{Cordes condition}, on the other hand. To this end we conclude this section by stating Gronwall's lemma as we need it in our proof. The generalization of the Cordes condition is subject of section 3.\\

\begin{lemma}[Gronwall]\label{Gronwall}
  Let $\psi\in\Continuous\bigl((0,T),\R\bigr)$ and $b,k\in\Lebesgue{1}\bigl((0,T),\R\bigr)$ be nonnegative functions.
	If $\psi$ satisfies
	\begin{equation*}
	  \psi(\tau) \leq a + \int_{0}^{\tau}b(t)\psi(t)\dint t + \int_{0}^{\tau}k(t)\psi(t)^p\dint t
	\end{equation*}
	for all $\tau\in[0,T]$ with constants $p\in(0,1)$ and $a\geq 0$, then
	\begin{equation*}
	  \psi(\tau) \leq \exp\left(\int_{0}^{\tau}b(t)\dint t\right)\left[a^{1-p}+(1-p)
		  \int_{0}^{\tau}k(t)\exp\left((p-1)\int_{0}^{t}b(\sigma)\dint\sigma\right)\dint t\right]^{1/(1-p)}
	\end{equation*}
	for all $\tau\in[0,T]$.\\
\end{lemma}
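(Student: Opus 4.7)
The plan is to reduce the integral inequality to a solved Bernoulli-type differential inequality by the standard majorant trick. Introduce
\[
  \phi(\tau) := a + \int_{0}^{\tau}b(t)\psi(t)\dint t + \int_{0}^{\tau}k(t)\psi(t)^p\dint t,
\]
so that $\psi(\tau)\leq\phi(\tau)$ by hypothesis, and $\phi$ is absolutely continuous on $[0,T]$ as the sum of indefinite Lebesgue integrals of $L^1$-functions (note $\psi$ is continuous, hence bounded on $[0,\tau]$, so $b\psi,k\psi^p\in L^1$). Since $b,k,\psi\geq 0$ and $\phi\geq\psi\geq 0$, monotonicity of $x\mapsto x^p$ on $[0,\infty)$ yields the pointwise differential inequality
\[
  \dot\phi(\tau) = b(\tau)\psi(\tau) + k(\tau)\psi(\tau)^p \leq b(\tau)\phi(\tau) + k(\tau)\phi(\tau)^p \quad \text{a.e.},\qquad \phi(0)=a.
\]

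The second step is to linearise via the substitution $v(\tau):=\bigl(\phi(\tau)+\e\bigr)^{1-p}$ for a small parameter $\e>0$ (used to avoid dividing by zero when $\phi(\tau)=0$; we will let $\e\to 0$ at the end). Then $v$ is absolutely continuous with
\[
  \dot v = (1-p)(\phi+\e)^{-p}\dot\phi \leq (1-p)(\phi+\e)^{-p}\bigl(b\phi + k\phi^p\bigr) \leq (1-p)\bigl(b\,v + k\bigr),
\]
using $\phi\leq\phi+\e$, $\phi^p\leq(\phi+\e)^p$, and $(\phi+\e)^{-p}\phi\leq(\phi+\e)^{1-p}=v$. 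Thus $v$ satisfies a linear first-order differential inequality with $v(0)=(a+\e)^{1-p}$.

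The third step is to integrate this inequality with the integrating factor $\exp\bigl(-(1-p)\int_0^\tau b(\sigma)\dint\sigma\bigr)$, which turns the left-hand side into a total derivative; integrating from $0$ to $\tau$ and solving for $v(\tau)$ yields
\[
  v(\tau) \leq \exp\!\left((1-p)\!\int_{0}^{\tau}\!b\right)\!\left[(a+\e)^{1-p} + (1-p)\!\int_{0}^{\tau}\!k(t)\exp\!\left((p-1)\!\int_{0}^{t}\!b(\sigma)\dint\sigma\right)\!\dint t\right].
\]
Raising to the power $1/(1-p)>0$, using $\psi\leq\phi\leq v^{1/(1-p)}$, and letting $\e\downarrow 0$ (by dominated convergence in the bracketed expression) gives the claimed estimate.

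The only delicate point is the passage from the integral inequality to the pointwise inequality for $\dot\phi$ and the justification of the substitution when $\phi$ may vanish; both are handled by the $\e$-regularisation above, so no genuine obstacle remains, and the proof is essentially a bookkeeping exercise once the Bernoulli reduction is in place.
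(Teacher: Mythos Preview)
Your argument is correct. The paper does not actually prove this lemma; it merely cites the textbook of Bainov and Simeonov for a proof, so there is nothing to compare against beyond noting that your Bernoulli-substitution approach is precisely the standard one found in such references.

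Two minor cosmetic remarks: the inequality $\phi\leq v^{1/(1-p)}$ is in fact an equality $\phi+\e=v^{1/(1-p)}$, and the appeal to dominated convergence is unnecessary since only the single term $(a+\e)^{1-p}$ depends on $\e$ and converges to $a^{1-p}$ directly. Neither affects the validity of the proof.
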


A proof of this version can be found for example in \cite{Bainov199205}.

%%%%%%%%%%%%%%%%%%%%%%%%%%%%%%%%%%%%%%%%%%%%%%%%%%%%%%%%%

\section{The Cordes condition}

In this section we prove the mentioned generalization of a result accomplished in \cite{Maugeri200004}. More on the Cordes condition can be found in the original articles \cite{Cordes1956,Cordes1961}.

\begin{theorem}\label{theorem:Cordes}
  Let $d\geq 2$ and $a_{ijk\ell}\in\Lebesgue{\infty}(\Omega,\R)$ for $k,i=1,\ldots,n$ and $j,\ell=1,\ldots,d$.
	Additionally, let there be $\lambda,\varepsilon > 0$ with $\varepsilon<1$ in such a way that
	\begin{equation*}
	       \sum_{k=1}^{n}\sum_{\ell=1}^{d}\sum_{i=1}^{n}\sum_{j=1}^{d}a_{k\ell ij}(x)\eta_{k\ell}\eta_{ij}
		\geq \lambda\sum_{k=1}^{n}\sum_{\ell=1}^{d}\abs{\eta_{k\ell}}^2
	\end{equation*}
	for all $(\eta_{k\ell})\in\R^{n\times d}$ and allmost all $x\in\Omega$ as well as
	\begin{equation}\label{Cordes}
	       \sum_{k=1}^{n}\sum_{\ell=1}^{d}\sum_{i=1}^{n}\sum_{j=1}^{d}a_{k\ell ij}^2(x)\left(\sum_{k=1}^{n}\sum_{\ell=1}^{d}a_{k\ell k\ell}(x)\right)^{\!-2}
		\leq \frac{1}{nd-1+\varepsilon}
	\end{equation}
	for allmost all $x\in\Omega$.
	Then the Dirichlet problem
	\begin{equation*}
	    \sum_{k=1}^{n}\sum_{\ell=1}^{d}\sum_{i=1}^{n}\sum_{j=1}^{d}a_{k\ell ij}(x)e_k\partial_{\ell j}u_i(x)
		= f(x),\quad u\in\Sobolev{2}(\Omega,\R^n)\cap\Sobolev{1}_0(\Omega,\R^n)
	\end{equation*}
	with $e_k$ denoting the $k$th standard basis vector in $\R^n$ admits a unique solution $u$ for every $f\in\Lebesgue{2}(\Omega,\R^n)$.
	Moreover, this solution fulfills
	\begin{equation}\label{EstimateDueToCordes}
	  \norm{u}_{\Sobolev{2}(\Omega,\R^n)} \leq C(\alpha)\norm{f}_{\Lebesgue{2}(\Omega,\R^n)}
	\end{equation}
	with
	\begin{equation*}
	  C(\alpha) := \hat{K}\frac{\esssup_{x\in\Omega}\alpha(x)}{1-\sqrt{1-\varepsilon}}
	\end{equation*}
	and
	\begin{equation*}
		\alpha(x) := \sum_{k=1}^{n}\sum_{\ell=1}^{d}a_{k\ell k\ell}(x)\left(\sum_{k=1}^{n}\sum_{\ell=1}^{d}\sum_{i=1}^{n}\sum_{j=1}^{d}
		               a_{k\ell ij}^2(x)\right)^{\!-1}\text{.}
	\end{equation*}
\end{theorem}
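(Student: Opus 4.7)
\emph{Approach.} My plan is to treat the operator $Tu:=\bigl(\sum_{\ell,i,j}a_{k\ell ij}(x)\partial_{\ell j}u_i\bigr)_{k=1}^n$ as a perturbation of the componentwise Laplacian $\Delta u:=\bigl(\sum_\ell\partial_{\ell\ell}u_k\bigr)_{k=1}^n$ after multiplication by the scalar field $\alpha(x)$ from the statement. The Cordes condition~\eqref{Cordes} will be read as: pointwise in $x$, the coefficient tensor $\alpha(x)a_{k\ell ij}(x)$ lies within Frobenius distance $\sqrt{1-\varepsilon}<1$ from the ``identity tensor'' $\delta_{ki}\delta_{\ell j}$. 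Combined with the Miranda--Talenti inequality, which is available on convex $\Continuous^2$-domains, this reduces the theorem to inverting a near-identity operator $\SobolevW{2}{2}_{\gamma_0}(\Omega,\R^n)\to\Lebesgue{2}(\Omega,\R^n)$ via a Neumann series.

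\emph{Key algebraic step.} First I would check that $\alpha$ is well-defined, positive and bounded: ellipticity applied to $\eta=e_k\otimes e_\ell$ gives $a_{k\ell k\ell}\geq\lambda$ a.e., hence $\sum_{k,\ell}a_{k\ell k\ell}\geq nd\lambda>0$, while Cauchy--Schwarz yields $\sum_{k,\ell,i,j}a_{k\ell ij}^2\geq(\sum_{k,\ell}a_{k\ell k\ell})^2/(nd)$, so $0<\alpha(x)\leq 1/\lambda$. Next, for fixed $x$ I would expand
\begin{equation*}
  \sum_{k,\ell,i,j}\bigl(\alpha(x)a_{k\ell ij}(x)-\delta_{ki}\delta_{\ell j}\bigr)^2
   = nd - \frac{\bigl(\sum_{k,\ell}a_{k\ell k\ell}(x)\bigr)^2}{\sum_{k,\ell,i,j}a_{k\ell ij}^2(x)}
   \leq nd-(nd-1+\varepsilon) = 1-\varepsilon,
\end{equation*}
the final inequality being an immediate rearrangement of~\eqref{Cordes}. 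A pointwise Cauchy--Schwarz in the multi-index $(\ell,i,j)$, followed by summation over $k$ and integration over $\Omega$, then produces the operator-perturbation bound
\begin{equation*}
  \|\alpha Tu-\Delta u\|_{\Lebesgue{2}(\Omega,\R^n)}
   \leq \sqrt{1-\varepsilon}\,\|u\|_{\SobolevW{2}{2}_{\gamma_0}(\Omega,\R^n)}
  \quad\text{for all } u\in\SobolevW{2}{2}_{\gamma_0}(\Omega,\R^n).
\end{equation*}

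\emph{Stability, uniqueness and existence.} Applying the classical Miranda--Talenti inequality componentwise on the convex $\Continuous^2$-domain $\Omega$ yields $\|u\|_{\SobolevW{2}{2}_{\gamma_0}}\leq\|\Delta u\|_{\Lebesgue{2}}$. A reverse triangle inequality combined with the perturbation bound then gives
\begin{equation*}
  (1-\sqrt{1-\varepsilon})\,\|u\|_{\SobolevW{2}{2}_{\gamma_0}}
   \leq \|\alpha Tu\|_{\Lebesgue{2}}
   \leq \bigl(\esssup_{x\in\Omega}\alpha(x)\bigr)\|Tu\|_{\Lebesgue{2}},
\end{equation*}
and~\eqref{EstimateDueToCordes} follows after invoking the norm equivalence $\|u\|_{\Sobolev{2}}\leq\hat{K}\|u\|_{\SobolevW{2}{2}_{\gamma_0}}$; uniqueness is immediate. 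For existence I would factor $\alpha T=\Delta\bigl(I+\Delta^{-1}(\alpha T-\Delta)\bigr)$; here $\Delta:\SobolevW{2}{2}_{\gamma_0}\to\Lebesgue{2}$ is an isomorphism by Miranda--Talenti, and the correction term has operator norm at most $\sqrt{1-\varepsilon}<1$, so a Neumann series makes $\alpha T$ an isomorphism as well. Since $\alpha>0$ almost everywhere and $\alpha\in\Lebesgue{\infty}(\Omega)$, solving $\alpha Tu=\alpha f$ for given $f\in\Lebesgue{2}(\Omega,\R^n)$ is equivalent to the original Dirichlet problem and produces its unique solution. I expect the real substance of the proof to sit in the algebraic identity of the second paragraph---recognising that the system case merely replaces the scalar dimension $d$ from~\cite{Maugeri200004} by $nd$ throughout, so that the Frobenius distance to $(\delta_{ki}\delta_{\ell j})$ remains strictly less than one; the componentwise Miranda--Talenti inequality and the Neumann series are then routine.
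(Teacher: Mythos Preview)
Your proposal is correct and follows essentially the same route as the paper: both hinge on the identical algebraic identity $\sum_{k,\ell,i,j}(\alpha a_{k\ell ij}-\delta_{ki}\delta_{\ell j})^2\leq 1-\varepsilon$ together with the componentwise Miranda--Talenti inequality. The only cosmetic difference is that the paper packages existence via the Banach fixed-point theorem applied to $w\mapsto\Delta^{-1}(\alpha f+\Delta w-\alpha\mathcal{L}w)$, whereas you invert $I+\Delta^{-1}(\alpha T-\Delta)$ by a Neumann series; these are two formulations of the same contraction argument.
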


\begin{proof}
  To prove Theorem \ref{theorem:Cordes} we follow the lines of the according proof in \cite{Maugeri200004}.
	Let $\mathcal{L}$ be the differential operator
	\begin{equation*}
	  \mathcal{L}u := \sum_{k=1}^{n}\sum_{\ell=1}^{d}\sum_{i=1}^{n}\sum_{j=1}^{d}a_{k\ell ij}(x)e_k\partial_{\ell j}u_i(x)\text{.}
	\end{equation*}
	Due to the premises $\alpha$ is strictly positive,
	since putting $\eta_{k\ell} := \delta_{k_0k}\delta_{\ell_0\ell}$ with $\delta$ being the Kronecker symbol reveals $a_{k_0\ell_0k_0\ell_0} \geq \lambda > 0$.
	Thus, $\mathcal{L}u = f$ is equivalent to $\Delta u = \alpha f + \Delta u - \alpha\mathcal{L}u$.
	The idea is to analyze the operator $T:\SobolevW{2}{2}_{\gamma_0}(\Omega,\R^n)\to\SobolevW{2}{2}_{\gamma_0}(\Omega,\R^n)$ defined by $Tw := U$,
	where $U$ denotes the unique solution of the Poisson problem
	\begin{equation}\label{eq-poisson}
	  \Delta U = \alpha f + \Delta w - \alpha\mathcal{L}w\in\Lebesgue{2}(\Omega,\R^n)\text{,}\quad U\in\SobolevW{2}{2}_{\gamma_0}(\Omega,\R^n)\text{.}
	\end{equation}
	Existence and uniqueness of a solution of \eqref{eq-poisson} can be seen by applying standard results as shown e.\,g. in \cite{Gilbarg197712} or \cite{Ladyzhenskaya196802}
	to the $k$th component
	\begin{equation*}
	    \Delta(U_k)
		= \alpha f_k + \Delta(w_k) - \alpha\sum_{\ell=1}^{d}\sum_{i=1}^{n}\sum_{j=1}^{d}a_{k\ell ij}\partial_{\ell j}w_i\in\Lebesgue{2}(\Omega)\text{.}
	\end{equation*}
	We focus now at the properties of $T$ and want to show that this mapping is a contraction.
	For this purpose, we draw on the famous \emph{Miranda-Talenti estimate}
	\begin{equation}\label{eq-miranda}
	  \int_{\Omega}\sum_{\ell=1}^{d}\sum_{j=1}^{d}\bigl(\partial_{\ell j}v(x)\bigr)^2\dint x \leq \int_{\Omega}\bigl(\Delta v(x)\bigr)^2\dint x\text{.}
	\end{equation}
	A proof of (\ref{eq-miranda}) can also be found in \cite{Maugeri200004}.
	Let $w_1,w_2\in\SobolevW{2}{2}_{\gamma_0}(\Omega,\R^n)$.
	Then using (\ref{eq-miranda}) and the Cauchy-Schwarz inequality yields
	\begin{align*}
	  &\mathrel{\phantom{\leq}} \norm{Tw_1-Tw_2}_{\SobolevW{2}{2}_{\gamma_0}(\Omega,\R^n)}^2
		=                        \sum_{k=1}^{n}\int_{\Omega}\sum_{\ell=1}^{d}\sum_{j=1}^{d}
		                            \bigl[\partial_{\ell j}\bigl(U_{1,k}(x)-U_{2,k}(x)\bigr)\bigr]^2\dint x\\
		&\leq                     \int_{\Omega}\sum_{k=1}^{n}\left\{\bigl[\Delta\bigl(U_1(x)-U_2(x)\bigr)\bigr]_k\right\}^2\dint x
    =                        \norm{\Delta(w_1-w_2)-\alpha\mathcal{L}(w_1-w_2)}_{\Lebesgue{2}(\Omega,\R^n)}^2\\
    &=                        \sum_{k=1}^{n}\int_{\Omega}\abs[auto]{\sum_{\ell=1}^{d}\sum_{i=1}^{n}\sum_{j=1}^{d}
		                            [\delta_{\ell j}\delta_{ki}-\alpha(x)a_{k\ell ij}(x)]\partial_{\ell j}(w_{1,i}(x)-w_{2,i}(x))}^2\dint x\\
		&\leq                     \int_{\Omega}\left[\sum_{k=1}^{n}\sum_{\ell=1}^{d}\sum_{i=1}^{n}\sum_{j=1}^{d}
		                            \bigl(\delta_{\ell j}\delta_{ki}-\alpha(x)a_{k\ell ij}(x)\bigr)^2\right]\!\!\!
		                            \left[\sum_{\ell=1}^{d}\sum_{i=1}^{n}\sum_{j=1}^{d}\left(\partial_{\ell j}
																\bigl(w_{1,i}(x)-w_{2,i}(x)\bigr)\right)^2\right]\!\!\dint x\text{.}
	\end{align*}
	The expression of the first factor of the integrand can be estimated as
	\begin{align*}
	  &\mathrel{\phantom{\leq}} \sum_{k=1}^{n}\sum_{\ell=1}^{d}\sum_{i=1}^{n}\sum_{j=1}^{d}
		                            \bigl(\delta_{\ell j}\delta_{ki}-\alpha(x)a_{k\ell ij}(x)\bigr)^2\\
		&=                        nd - 2\alpha(x)\sum_{k=1}^{n}\sum_{\ell=1}^{d}a_{k\ell k\ell}(x) +
		                            \alpha^2(x)\sum_{k=1}^{n}\sum_{\ell=1}^{d}\sum_{i=1}^{n}\sum_{j=1}^{d}a_{k\ell ij}^2(x)\\
		&=                        nd - \left(\sum_{k=1}^{n}\sum_{\ell=1}^{d}a_{k\ell k\ell}(x)\right)^2
		                            \left(\sum_{k=1}^{n}\sum_{\ell=1}^{d}\sum_{i=1}^{n}\sum_{j=1}^{d}a_{k\ell ij}^2(x)\right)^{\!-1}\\
		&\leq                     nd - (nd-1+\varepsilon)
		 =                        1 - \varepsilon\text{,}
	\end{align*}
	where we made use of the Cordes condition~\eqref{Cordes}.
	We summarize that
	\begin{align*}
	        \norm{Tw_1-Tw_2}_{\SobolevW{2}{2}_{\gamma_0}(\Omega,\R^n)}^2
		&\leq \int_{\Omega}(1-\varepsilon)\sum_{\ell=1}^{d}\sum_{i=1}^{n}\sum_{j=1}^{d}
		        \left(\partial_{\ell j}\bigl(w_{1,i}(x)-w_{2,i}(x)\bigr)\right)^2\dint x\\
		&=    (1-\varepsilon)\norm{w_1-w_2}_{\SobolevW{2}{2}_{\gamma_0}(\Omega,\R^n)}^2
	\end{align*}
	what proves that $T$ in fact is a contraction in $\SobolevW{2}{2}_{\gamma_0}(\Omega,\R^n)$, since $0<\varepsilon<1$. Due to the Banach fixed-point theorem, $T$ has a unique fixed-point,
	i.\,e. there exists a unique $w\in\SobolevW{2}{2}_{\gamma_0}(\Omega,\R^n)$ satisfying $w = Tw = U$.
	The definition of $T$ implies $\Delta w = \alpha f + \Delta w - \alpha\mathcal{L}w$, which is equivalent to $\mathcal{L}w = f$.
	
	It remains to varify \eqref{EstimateDueToCordes}.
	We have already shown that
	\begin{equation*}
	  \norm{U_1-U_2}_{\SobolevW{2}{2}_{\gamma_0}(\Omega,\R^n)}^2 \leq \norm{\Delta(U_1-U_2)}_{\Lebesgue{2}(\Omega,\R^n)}^2
	\end{equation*}
	Setting $w_1:=w$, with $w$ the unique fixed-point of $T$, and $w_2=0$ yielding $U_2 = Tw_2 = T0 = 0$ we infer
	\begin{align*}
	  \norm{w}_{\SobolevW{2}{2}_{\gamma_0}(\Omega,\R^n)} &\leq \norm{\Delta w}_{\Lebesgue{2}(\Omega,\R^n)}
		                                                    \leq \norm{\alpha f}_{\Lebesgue{2}(\Omega,\R^n)} +
																												       \norm{\Delta w-\alpha\mathcal{L}w}_{\Lebesgue{2}(\Omega,\R^n)}\\
																											 &\leq \esssup_{x\in\Omega}\alpha(x)\norm{f}_{\Lebesgue{2}(\Omega,\R^n)} +
																											         \sqrt{1-\varepsilon}\norm{u}_{\SobolevW{2}{2}_{\gamma_0}(\Omega,\R^n)},
	\end{align*}
	where we again used (\ref{eq-miranda}). The assertion finally follows from the equivalence of the norms
	$\norm{\vdot}_{\Sobolev{2}(\Omega,\R^n)}$ and
	$\norm{\vdot}_{\SobolevW{2}{2}_{\gamma_0}(\Omega,\R^n)}$.
\end{proof}

%%%%%%%%%%%%%%%%%%%%%%%%%%%%%%%%%%%%%%%%%%%%

\section{Proof of Theorem \ref{MainTheorem}}

Before we start with the proof of Theorem \ref{MainTheorem} we note that we may replace $\norm{\vdot}_{\Lebesgue{2}(\Omega,R^n)}$ in estimate~\eqref{MainEstimate} by the equivalent, weighted norm $\norm{f}_{\Lebesgue[\rho]{2}(\Omega,R^n)} := \norm{\rho f}_{\Lebesgue{2}(\Omega,R^n)}$ and get
\begin{align*}
	&\mathrel{\phantom{\leq}} \Bigl[\norm{(\dot{u}-\dot{\tilde{u}})(t,\vdot)}_{\Lebesgue[\rho]{2}(\Omega,R^n)}^2 +
		                          \kappa(\alpha)\norm{(\Jacobian u-\Jacobian\tilde{u})(t,\vdot)}_{\Lebesgue{2}(\Omega,\R^{n\times d})}^2 \mathop{+}\\
	&\mathrel{\phantom{\leq}} \mathop{+} \norm{(\ddot{u}-\ddot{\tilde{u}})(t,\vdot)}_{\Lebesgue[\rho]{2}(\Omega,R^n)}^2 +
		                          \kappa(\alpha)\norm{(\Jacobian\dot{u}-\Jacobian\dot{\tilde{u}})(t,\vdot)}_{\Lebesgue{2}(\Omega,\R^{n\times d})}^2 \mathop{+}\\
	&\mathrel{\phantom{\leq}} \mathop{+} \norm{(u-\tilde{u})(t,\vdot)}_{\Sobolev{2}(\Omega,\R^n)}^2\Bigr]^{1/2}\\
	&\leq                       \overline{C}_0\left[\mu(\alpha)\norm{u_0-\tilde{u}_0}_{\Sobolev{2}(\Omega,\R^n)}^2 +
	                            \norm{u_1-\tilde{u}_1}_{\Sobolev{1}(\Omega,\R^n)}\right]^{1/2} \mathop{+}\\
	&\mathrel{\phantom{\leq}} \mathop{+} \overline{C}_1\norm{f-\tilde{f}}_{\SobolevW{1}{1}((0,T),\Lebesgue{2}(\Omega,\R^n))} +
	                            \overline{C}_2\norm{\alpha-\tilde{\alpha}}_{\Sup}\text{.}
\end{align*}

The proof is subdivided in four parts:
\begin{enumerate}
\item We deduce an upper bound for the norm of $v:=u-\tilde{u}$, which depends on $\alpha,\tilde{\alpha}$, $u_0,\tilde{u}_0$, $u_1,\tilde{u}_1$ and $f,\tilde{f}$ (Section 4.1).
\item We show an upper bound for the time-derivative $z:=\dot{v}$ which additionally depends on $v$ (Section 4.2).
\item We prove an upper bound for the $\Sobolev{2}(\Omega,\R^n)$-norm of $v(\tau,\vdot)$ depending on $\alpha,\tilde{\alpha}$, $u_0,\tilde{u}_0$, $u_1,\tilde{u}_1$ and $f,\tilde{f}$ and other norms of derivatives of $v$ (Section 4.3).
\item We summarize the results so far and finish the proof (Section 4.4).
\end{enumerate}

%%%%%%%%%%%%%%%%%%%%%%%

\subsection{An upper bound for $u-\tilde{u}$}

To derive our aim to bound the norm of $v=u-\tilde{u}$ we at first prove some intermediate results. Thereby the key role will play Gronwall's lemma \ref{Gronwall}.\\

\begin{lemma}\label{LemmaUpperBoundu}%%Satz 3.12
  We have
	\begin{align*}
	  &\mathrel{\phantom{\leq}} \norm{\dot{u}(\tau,\vdot)}_{\Lebesgue[\rho]{2}(\Omega,\R^n)}^2 +
		                            2\sum_{K=1}^{N}\alpha_K\kappa_K^{[0]}\norm{\Jacobian u(\tau,\vdot)}_{\Lebesgue{2}(\Omega,\R^{n\times d})}^2\\
		&\leq                     \left[\left(\norm{u_1}_{\Lebesgue[\rho]{2}(\Omega,\R^n)}^2+2\sum_{K=1}^{N}\alpha_K\mu_K^{[0]}\norm{\Jacobian u_0}_{\Lebesgue{2}(\Omega,R^{n\times d})}^2\right)^{1/2} +
		                          \left(\int_{0}^{\tau}\norm{f(t,\vdot)}_{\Lebesgue{2}(\Omega,\R^n)}\right)^{1/2}\right]^2\text{.}
	\end{align*}
\end{lemma}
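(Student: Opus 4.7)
The plan is the classical energy identity for second-order hyperbolic systems: multiply \eqref{PDE} by $\dot u$, integrate over $\Omega$, and then integrate in time from $0$ to $\tau$. Differentiating \eqref{BoundaryValues} in $t$ shows that $\dot u(t,\vdot)|_{\partial\Omega}=0$, so $\dot u$ is an admissible test function. The inertial contribution collapses to
$$\int_\Omega\rho(x)\,\ddot u(t,x)\vdot\dot u(t,x)\dint x=\frac{1}{2}\frac{d}{dt}\norm{\dot u(t,\vdot)}_{\Lebesgue[\rho]{2}(\Omega,\R^n)}^2,$$
while the elastic contribution, after integrating the divergence by parts (no boundary term because of $\dot u|_{\partial\Omega}=0$), becomes $\sum_K\alpha_K\int_\Omega\minner{\nablabf_Y C_K(x,\Jacobian u)}{\Jacobian\dot u}\dint x$. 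By the chain rule applied to $Y\mapsto C_K(x,Y)$ along the curve $Y=\Jacobian u(t,x)$, this last expression is nothing but $\frac{d}{dt}\sum_K\alpha_K\int_\Omega C_K(x,\Jacobian u(t,x))\dint x$.

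Integrating this identity in time and multiplying by $2$ yields
\begin{align*}
  &\norm{\dot u(\tau,\vdot)}_{\Lebesgue[\rho]{2}}^2+2\sum_{K=1}^{N}\alpha_K\int_\Omega C_K\bigl(x,\Jacobian u(\tau,x)\bigr)\dint x\\
  &\qquad=\norm{u_1}_{\Lebesgue[\rho]{2}}^2+2\sum_{K=1}^{N}\alpha_K\int_\Omega C_K\bigl(x,\Jacobian u_0(x)\bigr)\dint x+2\int_{0}^{\tau}\!\int_\Omega\rho(x)f(t,x)\vdot\dot u(t,x)\dint x\dint t\text{.}
\end{align*}
Bounding $C_K(x,\Jacobian u(\tau,x))$ from below and $C_K(x,\Jacobian u_0(x))$ from above via \eqref{EstimateCK} converts this identity into
$$\psi(\tau)\leq a+2\int_{0}^{\tau}\!\int_\Omega\rho(x)f(t,x)\vdot\dot u(t,x)\dint x\dint t,$$
where $\psi(\tau)$ denotes the quantity to be bounded and $a:=\norm{u_1}_{\Lebesgue[\rho]{2}}^2+2\sum_{K}\alpha_K\mu_K^{[0]}\norm{\Jacobian u_0}_{\Lebesgue{2}}^2$. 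A Cauchy--Schwarz inequality applied in the weighted inner product, together with the trivial bound $\norm{\dot u(t,\vdot)}_{\Lebesgue[\rho]{2}}\leq\sqrt{\psi(t)}$, then produces the scalar integral inequality
$$\psi(\tau)\leq a+\int_{0}^{\tau}k(t)\sqrt{\psi(t)}\dint t$$
with $k(t)$ proportional to $\norm{f(t,\vdot)}_{\Lebesgue{2}(\Omega,\R^n)}$.

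This is exactly the setting of Gronwall's Lemma~\ref{Gronwall} with $p=1/2$ and $b\equiv 0$: the conclusion reads $\sqrt{\psi(\tau)}\leq\sqrt{a}+\tfrac{1}{2}\int_{0}^{\tau}k(t)\dint t$, and squaring gives the asserted bound. The one genuinely delicate point, which I would address first, is rigorously justifying the testing by $\dot u$ and the pointwise chain rule for $C_K\circ\Jacobian u$; both are ensured by the $\Continuous^2$-regularity of $\partial\Omega$, the smoothness of $C_K$ in $Y$ guaranteed by \eqref{Bound35}--\eqref{Premise311}, and the a priori spatial and temporal regularity of $u$ built into the hypotheses~\eqref{APriori316}--\eqref{APriori317} of Theorem~\ref{MainTheorem}. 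Once these technicalities are settled, everything else in the proof is a routine energy computation combined with the non-linear Gronwall inequality.
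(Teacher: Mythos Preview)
Your proof is correct and follows essentially the same route as the paper: multiply \eqref{PDE} by $2\dot u$, integrate by parts using $\dot u|_{\partial\Omega}=0$, recognize the elastic term as the time derivative of the stored energy via the chain rule, apply the two-sided bound \eqref{EstimateCK}, use Cauchy--Schwarz on the forcing term, and close with Gronwall's lemma (Lemma~\ref{Gronwall}) at $p=1/2$, $b\equiv 0$. The only cosmetic difference is that the paper takes $\psi(\tau)=\norm{\dot u(\tau,\vdot)}_{\Lebesgue[\rho]{2}}^2+2\sum_K\alpha_K\mu_K^{[0]}\norm{\Jacobian u(\tau,\vdot)}_{\Lebesgue{2}}^2$ (with $\mu_K^{[0]}$ rather than $\kappa_K^{[0]}$) as the Gronwall quantity, which dominates the left-hand side and yields the same conclusion.
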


\begin{proof}
  Multiplying equation (\ref{PDE}) by $2\dot{u}$ and integrating over $\Omega$ gives
	\begin{align}
	  &\mathrel{\phantom{=}} \partial_t\norm{\dot{u}(t,\vdot)}_{\Lebesgue[\rho]{2}(\Omega,\R^n)}^2 -
		                         2\sum_{K=1}^{N}\alpha_K\inner{\dot{u}(t,\vdot)}{\div\nablabf_YC_K\bigl(\vdot,\Jacobian u(t,\vdot)\bigr)}_{\Lebesgue{2}(\Omega,R^n)}\nonumber\\
	  &=                     2\inner{\rho(\vdot)\dot{u}(t,\vdot)}{f(t,\vdot)}_{\Lebesgue{2}(\Omega,R^n)}\text{.}\label{eq-help1}
	\end{align}
	Using the divergence theorem and the chain rule yields
	\begin{align*}
	  &\mathrel{\phantom{=}} \inner{\dot{u}(t,\vdot)}{\div\nablabf_YC_K\bigl(\vdot,\Jacobian u(t,\vdot)\bigr)}_{\Lebesgue{2}(\Omega,R^n)}\\
		&=                     \sum_{k=1}^{n}\int_{\Omega}\dot{u}_k(t,x)\div\left[e_k^T\nablabf_YC_K\bigl(x,\Jacobian u(t,x)\bigr)\right]\dint x\\
		&=                     -\int_{\Omega}\minner{\nablabf_YC_K\bigl(x,\Jacobian u(t,x)\bigr)}{\Jacobian\dot{u}(t,x)}\dint x\\
		&=                     \partial_t\left[-\int_{\Omega}C_K\bigl(x,\Jacobian u(t,x)\bigr)\dint x\right]\text{.}
	\end{align*}
	If we use this reformulation in \eqref{eq-help1}, we see that
	\begin{align*}
	  &\mathrel{\phantom{=}} \partial_t\left\{\norm{\dot{u}(t,\vdot)}_{\Lebesgue[\rho]{2}(\Omega,\R^n)}^2 +
		                         2\sum_{K=1}^{N}\alpha_K\int_{\Omega}C_K\bigl(x,\Jacobian u(t,x)\bigr)\dint x\right\}\\
		&=                     2\inner{\rho(\vdot)\dot{u}(t,\vdot)}{f(t,\vdot)}_{\Lebesgue{2}(\Omega,R^n)}\text{.}
	\end{align*}
	Applying this together with assumption (\ref{EstimateCK}),
	the fundamental theorem of calculus and (\ref{InitialValues}) we obtain
	\begin{align*}
	  &\mathrel{\phantom{=}} \norm{\dot{u}(\tau,\vdot)}_{\Lebesgue[\rho]{2}(\Omega,\R^n)}^2 +
		                         2\sum_{K=1}^{N}\alpha_K\kappa_K^{[0]}\norm{\Jacobian u(\tau,\vdot)}_{\Lebesgue{2}(\Omega,\R^{n\times d})}^2\\
		&\leq                  \norm{\dot{u}(\tau,\vdot)}_{\Lebesgue[\rho]{2}(\Omega,\R^n)}^2 +
		                         2\sum_{K=1}^{N}\alpha_K\int_{\Omega}C_K\bigl(x,\Jacobian u(\tau,x)\bigr)\dint x\\
		&=                     \int_{0}^{\tau}2\inner{\rho(\vdot)\dot{u}(t,\vdot)}{f(t,\vdot)}_{\Lebesgue{2}(\Omega,R^n)}\dint t +
		                         \norm{u_1}_{\Lebesgue[\rho]{2}(\Omega,\R^n)} \mathop{+}\\
		&\mathrel{\phantom{=}} \mathop{+} 2\sum_{K=1}^{N}\alpha_K\int_{\Omega}C_K\bigl(x,\Jacobian u_0(x)\bigr)\dint x\text{.}
	\end{align*}
	The Cauchy-Schwarz inequality and once more assumption (\ref{EstimateCK}) imply
	\begin{align*}
	  &\mathrel{\phantom{=}} \norm{\dot{u}(\tau,\vdot)}_{\Lebesgue[\rho]{2}(\Omega,\R^n)}^2 +
		                         2\sum_{K=1}^{N}\alpha_K\kappa_K^{[0]}\norm{\Jacobian u(\tau,\vdot)}_{\Lebesgue{2}(\Omega,\R^{n\times d})}^2\\
		&\leq                  \norm{u_1}_{\Lebesgue[\rho]{2}(\Omega,\R^n)} +
		                         2\sum_{K=1}^{N}\alpha_K\mu_K^{[0]}\norm{\Jacobian u_0}_{\Lebesgue{2}(\Omega,\R^{n\times d})}^2 +
		                         2\int_{0}^{\tau}\norm{f(t,\vdot)}_{\Lebesgue{2}(\Omega,\R^n)} \mathop{\times}\\
		&\mathrel{\phantom{=}} \mathop{\times} \left(\norm{\dot{u}(t,\vdot)}_{\Lebesgue[\rho]{2}(\Omega,\R^n)}^2 +
		                         2\sum_{K=1}^{N}\alpha_k\mu_K^{[0]}
														 \norm{\Jacobian u(t,\vdot)}_{\Lebesgue{2}(\Omega,\R^{n\times d})}^2\right)^{1/2}\dint t\text{.}
	\end{align*}
	The assertion now follows from Gronwall's lemma setting $b=0$, $p=1/2$, $k(t)=2\|f(t,\cdot)\|_{\Lebesgue{2}(\Omega,\R^n)}$,
	\[   a = \norm{u_1}_{\Lebesgue[\rho]{2}(\Omega,\R^n)} +
		                         2\sum_{K=1}^{N}\alpha_K\mu_K^{[0]}\norm{\Jacobian u_0}_{\Lebesgue{2}(\Omega,\R^{n\times d})}^2  \]
    and
    \[   \psi (\tau) =  \norm{\dot{u}(\tau,\vdot)}_{\Lebesgue[\rho]{2}(\Omega,\R^n)}^2 +
		                         2\sum_{K=1}^{N}\alpha_K\mu_K^{[0]}\norm{\Jacobian u(\tau,\vdot)}_{\Lebesgue{2}(\Omega,\R^{n\times d})}^2\text{.}  \]
\end{proof}

We proceed by proving an upper bound as in Lemma \ref{LemmaUpperBoundu} for the difference of two solutions $v$. To use again Gronwall's lemma we need a corresponding integral inequality which we will prove as a first step.\\

\begin{lemma}\label{LemmaUpperBoundv}%%Satz 3.15
  For $v = u-\tilde{u}$ we have
	\begin{align*}
	  &\mathrel{\phantom{=}} \norm{\dot{v}(\tau,\vdot)}_{\Lebesgue{2}[\rho](\Omega,\R^n)}^2 +
		                         \sum_{K=1}^{N}\alpha_K\kappa_K^{[1]}\norm{\Jacobian v(\tau,\vdot)}_{\Lebesgue{2}(\Omega,\R^{n\times d})}^2\\
		&\leq                  \norm{u_1-\tilde{u}_1}_{\Lebesgue[\rho](\Omega,\R^n)}^2 +
		                         \sum_{K=1}^{N}\alpha_K\mu_K^{[1]}
														 \norm{\Jacobian u_0-\Jacobian\tilde{u}_0}_{\Lebesgue{2}(\Omega,\R^{n\times d})}^2 \mathop{+}\\
		&\mathrel{\phantom{=}} \mathop{+} (nd)^2M_1\sum_{K=1}^{N}\mu_K^{[2]}\alpha_K
		                         \int_{0}^{\tau}\norm{\Jacobian v(t,\vdot)}_{\Lebesgue{2}(\Omega,\R^n)}^2\dint t \mathop{+}\\
		&\mathrel{\phantom{=}} \mathop{+} 2\int_{0}^{\tau}\left\{\norm{\dot{v}(t,\vdot)}_{\Lebesgue[\rho]{2}(\Omega,\R^n)} +
		                         \sum_{K=1}^{N}\alpha_K\kappa_K^{[1]}\norm{\Jacobian v(t,\vdot)}_{\Lebesgue{2}(\Omega,\R^{n\times d})}^2\right\}^{1/2} \mathop{\times}\\
		&\mathrel{\phantom{=}} \mathop{\times} \Bigl\{\norm{(f-\tilde{f})(t,\vdot)}_{\Lebesgue{2}(\Omega,\R^n)} \mathop{+}\\
		&\mathrel{\phantom{=}} \mathop{+} \sum_{K=1}^{N}\abs{\alpha_k-\tilde{\alpha}_K}d\left[\sqrt{n\vol(\Omega)}\rho_{\mathrm{min}}^{-1}\mu_K^{[4]} +
		                         dnM_0\rho_{\mathrm{min}}^{-1}\mu_K^{[1]}\right]\Bigr\}\dint t\text{.}
	\end{align*}
\end{lemma}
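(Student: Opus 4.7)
The plan is to mimic the proof of Lemma~\ref{LemmaUpperBoundu}, applied now to $v=u-\tilde u$. Subtracting equation~\eqref{PDE} for $\tilde u$ (with coefficients $\tilde\alpha$ and force $\tilde f$) from the equation for $u$ shows that
\begin{equation*}
\rho\ddot v = \sum_{K=1}^N\alpha_K\bigl[\div\nabla_YC_K(\cdot,\Jacobian u)-\div\nabla_YC_K(\cdot,\Jacobian\tilde u)\bigr] + \sum_{K=1}^N(\alpha_K-\tilde\alpha_K)\div\nabla_YC_K(\cdot,\Jacobian\tilde u) + \rho(f-\tilde f),
\end{equation*}
with $v\bigl|_{\partial\Omega}=0$. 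Multiplying by $2\dot v$, integrating over $\Omega$, and integrating by parts in the first sum (legitimate since $\dot v$ vanishes on $\partial\Omega$) produces the energy identity
\begin{align*}
&\partial_t\norm{\dot v(t,\cdot)}_{\Lebesgue[\rho]{2}(\Omega,\R^n)}^2 + 2\sum_{K=1}^N\alpha_K\int_\Omega\minner{\nabla_YC_K(\cdot,\Jacobian u)-\nabla_YC_K(\cdot,\Jacobian\tilde u)}{\Jacobian\dot v}dx\\
&\qquad= 2\sum_{K=1}^N(\alpha_K-\tilde\alpha_K)\inner{\dot v}{\div\nabla_YC_K(\cdot,\Jacobian\tilde u)}_{\Lebesgue{2}} + 2\inner{\rho\dot v}{f-\tilde f}_{\Lebesgue{2}}.
\end{align*}

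The key idea is to recognize the bilinear term involving $\Jacobian\dot v$ as, up to a controllable Taylor residual, the time derivative of the quadratic functional
\begin{equation*}
Q_K(t) := \int_\Omega\bigl[C_K(x,\Jacobian u(t,x))-C_K(x,\Jacobian\tilde u(t,x))-\minner{\nabla_YC_K(x,\Jacobian\tilde u(t,x))}{\Jacobian v(t,x)}\bigr]dx.
\end{equation*}
The integral form of Taylor's theorem together with~\eqref{EstimateDDCK} yields $\tfrac12\kappa_K^{[1]}\norm{\Jacobian v}_{\Lebesgue{2}}^2\leq Q_K(t)\leq\tfrac12\mu_K^{[1]}\norm{\Jacobian v}_{\Lebesgue{2}}^2$. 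Differentiating $Q_K$, applying the chain rule $\partial_t\nabla_YC_K(\cdot,\Jacobian\tilde u)=\nabla_Y\nabla_YC_K(\cdot,\Jacobian\tilde u)\Jacobian\dot{\tilde u}$, and regrouping using $\Jacobian\dot u=\Jacobian\dot{\tilde u}+\Jacobian\dot v$ produces
\begin{equation*}
\partial_tQ_K(t) = \int_\Omega\minner{\nabla_YC_K(\cdot,\Jacobian u)-\nabla_YC_K(\cdot,\Jacobian\tilde u)}{\Jacobian\dot v}dx + R_K(t),
\end{equation*}
where $R_K(t):=\int_\Omega\minner{\nabla_YC_K(\cdot,\Jacobian u)-\nabla_YC_K(\cdot,\Jacobian\tilde u)-\nabla_Y\nabla_YC_K(\cdot,\Jacobian\tilde u)\Jacobian v}{\Jacobian\dot{\tilde u}}dx$. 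Estimating $R_K$ by the standard integral remainder of a second-order Taylor expansion and using the componentwise bound~\eqref{Bound35} on $\partial^3_YC_K$ together with $\norm{\partial_\ell\dot{\tilde u}_k}_{\infty}\leq M_1$ gives $|R_K(t)|\leq\tfrac12(nd)^2\mu_K^{[2]}M_1\norm{\Jacobian v(t)}_{\Lebesgue{2}}^2$, which reproduces precisely the $(nd)^2M_1\sum_K\alpha_K\mu_K^{[2]}$ coefficient appearing in the claim.

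Substituting $\partial_tQ_K$ back into the energy identity and integrating from $0$ to $\tau$, exploiting $v(0)=u_0-\tilde u_0$, $\dot v(0)=u_1-\tilde u_1$ and the bounds $\sum_K\alpha_K\kappa_K^{[1]}\norm{\Jacobian v(\tau)}_{\Lebesgue{2}}^2\leq 2\sum_K\alpha_KQ_K(\tau)$ and $2\sum_K\alpha_KQ_K(0)\leq\sum_K\alpha_K\mu_K^{[1]}\norm{\Jacobian u_0-\Jacobian\tilde u_0}_{\Lebesgue{2}}^2$, converts the identity into an inequality of the claimed form. The source term is handled by Cauchy--Schwarz to produce the $\norm{(f-\tilde f)(t,\cdot)}_{\Lebesgue{2}}$ factor. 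For the coefficient term, Cauchy--Schwarz combined with the pointwise bound $\rho\geq\rho_{\min}$ yields the stated $\rho_{\min}^{-1}$ weight, while $\norm{\div\nabla_YC_K(\cdot,\Jacobian\tilde u)}_{\Lebesgue{2}}$ is expanded by the chain rule into a piece bounded via~\eqref{Bound37} (producing the $\sqrt{n\vol(\Omega)}\mu_K^{[4]}$ contribution) and a piece of the form $\partial_{Y_{ij}}\partial_{Y_{k\ell}}C_K\cdot\partial_\ell\partial_j\tilde u_i$ that is controlled componentwise by $\mu_K^{[1]}$ from~\eqref{EstimateDDCK} and integrated against $M_0$. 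Finally, estimating $\norm{\dot v}_{\Lebesgue[\rho]{2}}\leq\{\norm{\dot v}_{\Lebesgue[\rho]{2}}^2+\sum_K\alpha_K\kappa_K^{[1]}\norm{\Jacobian v}_{\Lebesgue{2}}^2\}^{1/2}$ brings the inequality into the stated $\{\ldots\}^{1/2}\{\ldots\}$ shape. The principal difficulty is the construction of $Q_K$: it must simultaneously provide a two-sided bound on $\norm{\Jacobian v}_{\Lebesgue{2}}^2$ with the constants $\kappa_K^{[1]},\mu_K^{[1]}$ and admit a time derivative matching the cross term from the energy identity modulo a residual controllable in terms of $\mu_K^{[2]}$ and $M_1$; the strong regularity hypotheses on $u,\tilde u$ in the theorem are exactly what make this residual manageable.
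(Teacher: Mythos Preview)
Your argument is correct and follows the same energy-method strategy as the paper: multiply the equation for $v$ by $2\dot v$, integrate by parts, identify a time-derivative structure in the principal term, and estimate the residual via~\eqref{Bound35} and~\eqref{APriori316}. The only difference is the choice of quadratic ``potential'': the paper uses the segment-averaged Hessian form
\[
\int_\Omega\int_0^1\partial_{Y_{ij}}\partial_{Y_{k\ell}}C_K\bigl(x,(1-s)\Jacobian u+s\Jacobian\tilde u\bigr)\,\partial_j v_i\,\partial_\ell v_k\,ds\,dx,
\]
obtained directly from the fundamental theorem of calculus, whereas you use the second-order Taylor remainder $Q_K$ (the ``relative energy''); both satisfy the same two-sided bounds from~\eqref{EstimateDDCK} and produce residuals bounded identically by $\tfrac12(nd)^2\mu_K^{[2]}M_1\|\Jacobian v\|_{\Lebesgue{2}}^2$, so the resulting inequality is the same. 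Your residual involves only $\Jacobian\dot{\tilde u}$ rather than the convex combination $(1-s)\Jacobian\dot u+s\Jacobian\dot{\tilde u}$ appearing in the paper, which is marginally cleaner but makes no difference once~\eqref{APriori316} is applied.
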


\begin{proof}
  Note that $v$ solves the differential equation
	\begin{align}
	  &\mathrel{\phantom{=}} \rho(x)\ddot{v}(t,x) - \sum_{K=1}^{N}\alpha_K\div\left[\nablabf_YC_K\bigl(x,\Jacobian u(t,x)\bigr) -
		                         \nablabf_YC_K\bigl(x,\Jacobian\tilde{u}(t,x)\bigr)\right]\nonumber\\
		&=                     \rho(x)\bigl(f(t,x)-\tilde{f}(t,x)\bigr) +
		                         \sum_{K=1}^{N}(\alpha_K-\tilde{\alpha}_K)\div\nablabf_YC_K\bigl(x,\Jacobian\tilde{u}(t,x)\bigr)\label{PDEv}\text{.}
	\end{align}
	As in the proof of Lemma \ref{LemmaUpperBoundu} we multiply equation \eqref{PDEv} by $2\dot{v}$. Hence we reformulate the product of $2\dot{v}$ with the sum on the left-hand side of equation \eqref{PDEv} applying Gaussian's divergence theorem, the fundamental theorem of calculus and the chain rule and obtain
	\begin{align*}
	  &\mathrel{\phantom{=}} \inner{2\dot{v}(t,\vdot)}{\div\left[\nablabf_YC_K\bigl(\vdot,\Jacobian u(t,\vdot)\bigr) -
		                         \nablabf_YC_K\bigl(\vdot,\Jacobian\tilde{u}(t,\vdot)\bigr)\right]}_{\Lebesgue{2}(\Omega,\R^n)}\\
		&=                     -2\sum_{k=1}^{n}\int_{\Omega}\inner{e_k^T\left[\nablabf_YC_K\bigl(x,\Jacobian u(t,x)\bigr) -
		                         \nablabf_YC_K\bigl(x,\Jacobian\tilde{u}(t,x)\bigr)\right]}{\nabla\dot{v}_k(t,x)}\dint x\\
		&=                     -2\sum_{k=1}^{n}\sum_{\ell=1}^{d}\int_{\Omega}\int_{0}^{1}
		                         \minner{\nablabf_YC_K\bigl(x,(1-s)\Jacobian u(t,x)+s\Jacobian\tilde{u}(t,x)\bigr)}{\Jacobian v(t,x)} %\mathop{\times}
		%&\mathrel{\phantom{=}} \mathop{\times} 
        \partial_{\ell}\dot{v}_k(t,x)\dint s\dint x\text{.}
	\end{align*}
	A subsequent application of the product and chain rule gives
	\begin{align*}
	  &\mathrel{\phantom{=}} \inner{2\dot{v}(t,\vdot)}{\div\left[\nablabf_YC_K\bigl(\vdot,\Jacobian u(t,\vdot)\bigr) -
		                         \nablabf_YC_K\bigl(\vdot,\Jacobian\tilde{u}(t,\vdot)\bigr)\right]}_{\Lebesgue{2}(\Omega,\R^n)}\\
		&=                     -\sum_{k=1}^{n}\sum_{\ell=1}^{d}\sum_{i=1}^{n}\sum_{j=1}^{d}
		                       \int_{\Omega}\int_{0}^{1} \Big\{ \partial_{Y_{ij}}\partial_{Y_{k\ell}}C_K\bigl(x,(1-s)\Jacobian u(t,x)+s\Jacobian\tilde{u}(t,x)\bigr) \mathop{\times}\\
		&\mathrel{\phantom{=}} \mathop{\times} \partial_t\bigl(\partial_jv_i(t,x)\partial_\ell v_k(t,x)\bigr) \Big\} \dint s\dint x\\
		&=                     -\partial_t\sum_{k=1}^{n}\sum_{\ell=1}^{d}\sum_{i=1}^{n}\sum_{j=1}^{d}
		                         \int_{\Omega}\partial_jv_i(t,x)\partial_\ell v_k(t,x)\dint s\dint x \mathop{\times}\\
		&\mathrel{\phantom{=}} \mathop{\times} \int_{0}^{1}\partial_{Y_{ij}}\partial_{Y_{k\ell}}C_K
		                         \bigl(x,(1-s)\Jacobian u(t,x)+s\Jacobian\tilde{u}(t,x)\bigr)\dint s\dint x \mathop{+}\\
		&\mathrel{\phantom{=}} \mathop{+} \sum_{k=1}^{n}\sum_{\ell=1}^{d}\sum_{i=1}^{n}\sum_{j=1}^{d}\int_{\Omega}\partial_jv_i(t,x)\partial_\ell v_k(t,x) \mathop{\times}\\
		&\mathrel{\phantom{=}} \mathop{\times} \int_{0}^{1}\big\langle\!\big\langle \nablabf_Y\partial_{Y_{ij}}\partial_{Y_{k\ell}}C_K\bigl(x,(1-s)\Jacobian u(t,x)+s\Jacobian\tilde{u}(t,x)\bigr)\big|
		%&\mathrel{\phantom{=}} 
        %\minnerr{
        (1-s)\Jacobian\dot{u}(t,x)+s\Jacobian\dot{\tilde{u}}(t,x) \big\rangle\!\big\rangle\dint s\dint x\text{.}
	\end{align*}
	The dot product of the divergence-term on the right-hand side of \eqref{PDEv} with $2\dot{v}$ is easily computed to
	\begin{align*}
	  &\mathrel{\phantom{=}} \inner{2\dot{v}(t,\vdot)}{\div\nabla_YC_K\bigl(\vdot,\Jacobian\tilde{u}(t,\vdot)\bigr)}\dint x\\
		&\hspace*{1.5cm}=                     2\sum_{k=1}^{n}\sum_{\ell=1}^{d}\int_{\Omega}\dot{v}_k(t,x)\partial_\ell\partial_{Y_{k\ell}}
		                         C_K\bigl(x,\Jacobian\tilde{u}(t,x)\bigr)\dint x \mathop{+}\\
		&\hspace*{1.5cm}\mathrel{\phantom{=}} \mathop{+} 2\sum_{k=1}^{n}\sum_{\ell=1}^{d}\int_{\Omega}\dot{v}_k(t,x)
		                         \minner{\nablabf_YC_K\bigl(x,\Jacobian\tilde{u}(t,x)\bigr)}{\partial_\ell\Jacobian\tilde{u}(t,x)}\dint x\text{.}
	\end{align*}
	We summarize that taking the dot product of \eqref{PDEv} with $2\dot{v}$ gives\\
	\vspace{-2.1cm}
	%\begin{align*}
	\begin{eqnarray*}
	  %&\mathrel{\phantom{=}} 
      \partial_t\norm{\dot{v}(t,\vdot)}_{\Lebesgue[\rho]{2}(\Omega,\R^n)}^2 &+&
	 \sum_{K=1}^{N}\alpha_K\partial_t\sum_{k=1}^{n}\sum_{\ell=1}^{d}\sum_{i=1}^{n}\sum_{j=1}^{d}
		                         \int_{\Omega}\partial_jv_i(t,x)\partial_\ell v_k(t,x)\dint s\dint x \mathop{\times}\\		                         
		%&\mathrel{\phantom{=}} 
        &&\hspace*{-3.3cm}\mathop{\times} \int_{0}^{1}\partial_{Y_{ij}}\partial_{Y_{k\ell}}
		                         C_K\bigl(x,(1-s)\Jacobian u(t,x)+s\Jacobian\tilde{u}(t,x)\bigr)\dint s\dint x \mathop{+}\\		                         
		%&\mathrel{\phantom{=}} 
        &&\hspace*{-3.3cm}\mathop{-} \sum_{K=1}^{N}\alpha_K\sum_{k=1}^{n}\sum_{\ell=1}^{d}\sum_{i=1}^{n}\sum_{j=1}^{d}
		                         \int_{\Omega}\partial_jv_i(t,x)\partial_\ell v_k(t,x) \mathop{\times}\\		                         
		%&\mathrel{\phantom{=}} 
        &&\hspace*{-3.3cm}\mathop{\times} \int_{0}^{1}\left\langle\!\left\langle \nablabf_Y\partial_{Y_{ij}}\partial_{Y_{k\ell}}C_K\bigl(x,(1-s)\Jacobian u(t,x)+s\Jacobian\tilde{u}(t,x)\bigr)\big|
	    (1-s)\Jacobian\dot{u}(t,x)+s\Jacobian\dot{\tilde{u}}(t,x) \right\rangle\!\right\rangle \dint s\dint x\\	    
		%&=                     
        &&\hspace*{-3.3cm} = 2\inner{\rho\dot{v}(t,\vdot)}{(f-\tilde{f})(t,\vdot)}_{\Lebesgue{2}(\Omega,\R^n)} +\\		
		&&\hspace*{-3.3cm} + 2\sum_{K=1}^{N}(\alpha_K-\tilde{\alpha}_K)
		                         \Biggl[\sum_{k=1}^{n}\sum_{\ell=1}^{d}\int_{\Omega}\dot{v}_k(t,x)\partial_\ell\partial_{Y_{k\ell}}
														 C_K\bigl(x,\Jacobian\tilde{u}(t,x)\bigr)\dint x \mathop{+}\\														 
		%&\mathrel{\phantom{=}} 
         &&\hspace*{-3.3cm} \mathop{+} \sum_{k=1}^{n}\sum_{\ell=1}^{d}\int_{\Omega}\dot{v}_k(t,x)
		                         \minner{\nablabf_YC_K\bigl(x,\Jacobian\tilde{u}(t,x)\bigr)}{\partial_\ell\Jacobian\tilde{u}(t,x)}\dint x\Biggr]\text{.}
	%\end{align*}
	\end{eqnarray*}
	%\vspace{-1.7cm}
	We integrate this identity over $[0,\tau]$ to get the important equality
	\begin{align*}
	  &\mathrel{\phantom{=}} \norm{\dot{v}(t,\vdot)}_{\Lebesgue[\rho]{2}(\Omega,\R^n)}^2 +
		                         \sum_{K=1}^{N}\alpha_K\sum_{k=1}^{n}\sum_{\ell=1}^{d}\sum_{i=1}^{n}\sum_{j=1}^{d}
														 \int_{\Omega}\partial_jv_i(\tau,x)\partial_\ell v_k(\tau,x)\dint s\dint x \mathop{\times}\\													 
		&\mathrel{\phantom{=}} \mathop{\times} \int_{0}^{1}\partial_{Y_{ij}}\partial_{Y_{k\ell}}
		                         C_K\bigl(x,(1-s)\Jacobian u(\tau,x)+s\Jacobian\tilde{u}(\tau,x)\bigr)\dint s\dint x \mathop{+}\\		                         
		&=                     \norm{\dot{v}(0,\vdot)}_{\Lebesgue[\rho]{2}(\Omega,\R^n)}^2 +
		                         \sum_{K=1}^{N}\alpha_K\sum_{k=1}^{n}\sum_{\ell=1}^{d}\sum_{i=1}^{n}\sum_{j=1}^{d}
														 \int_{\Omega}\partial_jv_i(0,x)\partial_\ell v_k(0,x)\dint s\dint x \mathop{\times}\\														 
    &\mathrel{\phantom{=}} \mathop{\times} \int_{0}^{1}\partial_{Y_{ij}}\partial_{Y_{k\ell}}
		                         C_K\bigl(x,(1-s)\Jacobian u(0,x)+s\Jacobian\tilde{u}(0,x)\bigr)\dint s\dint x \mathop{+}\\		                         
		&\mathrel{\phantom{=}} \mathop{+} \int_{0}^{\tau}\Biggl\{2\inner{\rho\dot{v}(t,\vdot)}{(f-\tilde{f})(t,\vdot)}_{\Lebesgue{2}(\Omega,\R^n)} \mathop{+}\\	
		&\mathrel{\phantom{=}} \mathop{+} \sum_{K=1}^{N}\alpha_K\sum_{k=1}^{n}\sum_{\ell=1}^{d}\sum_{i=1}^{n}\sum_{j=1}^{d}
		                         \int_{\Omega}\partial_jv_i(t,x)\partial_\ell v_k(t,x) \mathop{\times}\\		                         
		&\mathrel{\phantom{=}} \mathop{\times} \int_{0}^{1}\minnerl{\nablabf_Y\partial_{Y_{ij}}\partial_{Y_{k\ell}}C_K\bigl(x,(1-s)\Jacobian u(t,x)+s\Jacobian\tilde{u}(t,x)\bigr)}\\		
		&\mathrel{\phantom{=}} \minnerr{(1-s)\Jacobian\dot{u}(t,x)+s\Jacobian\dot{\tilde{u}}(t,x)}\dint s\dint x
\end{align*}
    \begin{align*}		
		&\mathrel{\phantom{=}} \mathop{+} 2\sum_{K=1}^{N}(\alpha_K-\tilde{\alpha}_K)
		                         \Biggl[\sum_{k=1}^{n}\sum_{\ell=1}^{d}\int_{\Omega}\dot{v}_k(t,x)\partial_\ell\partial_{Y_{k\ell}}
														 C_K\bigl(x,\Jacobian\tilde{u}(t,x)\bigr)\dint x \mathop{+}\\														 
		&\mathrel{\phantom{=}} \mathop{+} \sum_{k=1}^{n}\sum_{\ell=1}^{d}\int_{\Omega}\dot{v}_k(t,x)
		                         \minner{\nablabf_YC_K\bigl(x,\Jacobian\tilde{u}(t,x)\bigr)}{\partial_\ell\Jacobian\tilde{u}(t,x)}\dint x\Biggr]\Biggr\}\dint t
	\end{align*}
	which we take as basis for the proof of the bound to be verified. From assumption (\ref{EstimateDDCK}), it is easy to see that the left-hand side is bounded from below by
	\begin{equation*}
	  \norm{\dot{v}(\tau,\vdot)}_{\Lebesgue[\rho]{2}(\Omega,\R^n)}^2 +
		\sum_{K=1}^{N}\alpha_K\kappa_K^{[1]}\norm{\Jacobian v(\tau,\vdot)}_{\Lebesgue{2}(\Omega,\R^{n\times d})}^2\text{.}
	\end{equation*}
	The right-hand side demands for deeper investigation. Assumption (\ref{EstimateDDCK}) along with the Cauchy-Schwarz inequality implies that an upper bound is given by
	\begin{align*}
	  &\mathrel{\phantom{=}} \norm{u_1-\tilde{u}_1}_{\Lebesgue[\rho]{2}(\Omega,\R^n)}^2 +
		                        \sum_{K=1}^{N}\alpha_K\mu_K^{[1]}\norm{\Jacobian u_0-\Jacobian\tilde{u}_0}_{\Lebesgue{2}(\Omega,\R^{n\times d})}^2 \mathop{+}\\
		&\mathrel{\phantom{=}} \mathop{+} 2\int_{0}^{\tau}\left\{\norm{\dot{v}(t,\vdot)}_{\Lebesgue[\rho]{2}(\Omega,\R^n)}^2 +
		                         \sum_{K=1}^{N}\alpha_K\kappa_K^{[1]}\norm{\Jacobian v(t,\vdot)}_{\Lebesgue{2}(\Omega,\R^{n\times d})}^2\right\}^{1/2} \mathop{\times}\\
    &\mathrel{\phantom{=}} \mathop{\times} \norm{(f-\tilde{f})(t,\vdot)}_{\Lebesgue{2}(\Omega,\R^n)}\dint t \mathop{+}\\
		&\mathrel{\phantom{=}} \mathop{+} \sum_{K=1}^{N}\alpha_K\int_{0}^{\tau}\sum_{k=1}^{n}\sum_{\ell=1}^{d}\sum_{i=1}^{n}\sum_{j=1}^{d}
		                         \int_{\Omega}\partial_jv_i(t,x)\partial_\ell v_k(t,x) \mathop{\times}\\
		&\mathrel{\phantom{=}} \mathop{\times} \int_{0}^{1}\minnerl{\nablabf_Y\partial_{Y_{ij}}\partial_{Y_{k\ell}}
		                         C_K\bigl(x,(1-s)\Jacobian u(t,x)+s\Jacobian\tilde{u}(t,x)\bigr)}\\
		&\mathrel{\phantom{=}} \minnerr{(1-s)\Jacobian\dot{u}(t,x)+s\Jacobian\dot{\tilde{u}}(t,x)}\dint s\dint x\dint t\\
		&\mathrel{\phantom{=}} \mathop{+} 2\sum_{K=1}^{N}(\alpha_K-\tilde{\alpha}_K)
		                         \int_{0}^{\tau}\Biggl[\sum_{k=1}^{n}\sum_{\ell=1}^{d}\int_{\Omega}\dot{v}_k(t,x)\partial_\ell\partial_{Y_{k\ell}}
														 C_K\bigl(x,\Jacobian\tilde{u}(t,x)\bigr)\dint x \mathop{+}\\
		&\mathrel{\phantom{=}} \mathop{+} \sum_{k=1}^{n}\sum_{\ell=1}^{d}\int_{\Omega}\dot{v}_k(t,x)
		                         \minner{\nablabf_YC_K\bigl(x,\Jacobian\tilde{u}(t,x)\bigr)}{\partial_\ell\Jacobian\tilde{u}(t,x)}\dint x\Biggr]\dint t\text{.}
	\end{align*}
	We estimate the different terms involving sums separately. Because the coefficients $\alpha_K$ are nonnegative, and according to the a\,priori estimates (\ref{APriori316}) and 
    assumption (\ref{Bound35}), we have
	\begin{align*}
	  &\mathrel{\phantom{=}} \sum_{K=1}^{N}\alpha_K\int_{0}^{\tau}\sum_{k=1}^{n}\sum_{\ell=1}^{d}\sum_{i=1}^{n}\sum_{j=1}^{d}
		                         \int_{\Omega}\partial_jv_i(t,x)\partial_\ell v_k(t,x) \mathop{\times}\\
		&\mathrel{\phantom{=}} \mathop{\times} \int_{0}^{1}\minnerl{\nablabf_Y\partial_{Y_{ij}}\partial_{Y_{k\ell}}
		                         C_K\bigl(x,(1-s)\Jacobian u(t,x)+s\Jacobian\tilde{u}(t,x)\bigr)}\\
		&\mathrel{\phantom{=}} \minnerr{(1-s)\Jacobian\dot{u}(t,x)+s\Jacobian\dot{\tilde{u}}(t,x)}\dint s\dint x\dint t\\
	  &\leq ndM_1\sum_{K=1}^{N}\mu_K^{[2]}\alpha_K\int_{0}^{\tau}\sum_{k=1}^{n}\sum_{\ell=1}^{d}\sum_{i=1}^{n}\sum_{j=1}^{d}
		                         \int_{\Omega}\abs{\partial_jv_i(t,x)}\abs{\partial_\ell v_k(t,x)}\dint x\dint t\\
		&=                     ndM_1\sum_{K=1}^{N}\mu_K^{[2]}\alpha_K\int_{0}^{\tau}
		                         \int_{\Omega}\left(\sum_{k=1}^{n}\sum_{\ell=1}^{d}\abs{\partial_\ell v_k(t,x)}\right)^2\dint x\dint t\\
		&\leq                  (nd)^2M_1\sum_{K=1}^{N}\mu_K^{[2]}\alpha_K
		                         \int_{0}^{\tau}\norm{\Jacobian v(t,\vdot)}_{\Lebesgue{2}(\Omega,\R^{n\times d})}^2\dint t\text{,}
	\end{align*}
	where we used the Cauchy-Schwarz inequality.\\
    The Cauchy-Schwarz inequality, assumption (\ref{Bound37}) and H\"older's inequality show that
	\begin{align*}
	  &\mathrel{\phantom{=}} \abs[auto]{\int_{0}^{\tau}\sum_{k=1}^{n}\sum_{\ell=1}^{d}\int_{\Omega}\dot{v}_k(t,x)\partial_\ell\partial_{Y_{k\ell}}C_K\bigl(x,\Jacobian\tilde{u}(t,x)\bigr)\dint x\dint t}\\
		&\leq                  \sqrt{n}d\mu_K^{[4]}\int_{0}^{\tau}\int_{\Omega}\left(\sum_{k=1}^{n}\abs{\dot{v}_k(t,x)}^2\right)^{1/2}\dint x\dint t\\
		&\leq                  \sqrt{n\vol(\Omega)}d\mu_K^{[4]}\int_{0}^{\tau}\left\{\norm{\dot{v}(t,\vdot)}_{\Lebesgue[\rho]{2}(\Omega,\R^n)}^2 +
		                         \sum_{L=1}^{N}\alpha_L\kappa_L^{[1]}\norm{\Jacobian v(t,\vdot)}_{\Lebesgue{2}(\Omega,\R^{n\times d})}^2\right\}^{\!1/2}\!\!\dint t\text{.}
	\end{align*}
	The left-hand side may be estimated using the Cauchy-Schwarz and
	H\"older inequalities as well as assumption (\ref{EstimateDDCK}) as
	\begin{align*}
	  &\mathrel{\phantom{=}} \abs[auto]{\int_{0}^{\tau}\sum_{k=1}^{n}\sum_{\ell=1}^{d}\int_{\Omega}\dot{v}_k(t,x)
		                         \minner{\nablabf C_K\bigl(x,\Jacobian\tilde{u}(t,x)\bigr)}{\partial_\ell\Jacobian\tilde{u}(t,x)}\dint x\dint t}\\
		&\leq                  \sqrt{n}\mu_K^{[1]}\int_{0}^{\tau}\norm{\dot{v}_k(t,\vdot)}_{\Lebesgue{2}(\Omega,\R^n)}
		                         \left\{\int_{\Omega}\left(\sum_{\ell=1}^{d}\sum_{i=1}^{n}\sum_{j=1}^{d}
														 \abs{\partial_\ell\partial_j\tilde{u}_i(t,x)}\right)^2\dint x\right\}^{\!1/2}\!\!\dint t\\
		&\leq                  d^2nM_0\rho_{\mathrm{min}}^{-1}\mu_K^{[1]}\int_{0}^{\tau}\left\{\norm{\dot{v}(t,\vdot)}_{\Lebesgue[\rho]{2}(\Omega,\R^n)} +
		                         \sum_{L=1}^{N}\alpha_L\kappa_L^{[1]}\norm{\Jacobian v(t,\vdot)}_{\Lebesgue{2}(\Omega,\R^{n\times d})}^2\right\}^{\!1/2}\!\!\dint t\text{,}
	\end{align*}
	where in the last step we used the a\,priori estimates (\ref{APriori316}) and the Cauchy-Schwarz inequality.\\
	Putting all these estimates together and rearranging terms a little bit, we finally arrive at
	\begin{align*}
	  &\mathrel{\phantom{=}} \norm{\dot{v}(\tau,\vdot)}_{\Lebesgue{2}[\rho](\Omega,\R^n)}^2 +
		                         \sum_{K=1}^{N}\alpha_K\kappa_K^{[1]}\norm{\Jacobian v(\tau,\vdot)}_{\Lebesgue{2}(\Omega,\R^{n\times d})}^2\\
		&\leq                  \norm{u_1-\tilde{u}_1}_{\Lebesgue[\rho](\Omega,\R^n)}^2 +
		                        \sum_{K=1}^{N}\alpha_K\mu_K^{[1]}\norm{\Jacobian u_0-\Jacobian\tilde{u}_0}_{\Lebesgue{2}(\Omega,\R^{n\times d})}^2 \mathop{+}\\
		&\mathrel{\phantom{=}} \mathop{+} (nd)^2M_1\sum_{K=1}^{N}\mu_K^{[2]}\alpha_K\int_{0}^{\tau}\norm{\Jacobian v(t,\vdot)}_{\Lebesgue{2}(\Omega,\R^n)}^2\dint t \mathop{+}\\
		&\mathrel{\phantom{=}} \mathop{+} 2\int_{0}^{\tau}\left\{\norm{\dot{v}(t,\vdot)}_{\Lebesgue[\rho]{2}(\Omega,\R^n)} +
		                         \sum_{K=1}^{N}\alpha_K\kappa_K^{[1]}\norm{\Jacobian v(t,\vdot)}_{\Lebesgue{2}(\Omega,\R^{n\times d})}^2\right\}^{1/2} \mathop{\times}\\
		&\mathrel{\phantom{=}} \mathop{\times} \Bigl\{\norm{(f-\tilde{f})(t,\vdot)}_{\Lebesgue{2}(\Omega,\R^n)} \mathop{+}\\
		&\mathrel{\phantom{=}} \mathop{+} \sum_{K=1}^{N}\abs{\alpha_k-\tilde{\alpha}_K}d\left[\sqrt{n\vol(\Omega)}\rho_{\mathrm{min}}^{-1}\mu_K^{[4]} +
		                         dnM_0\rho_{\mathrm{min}}^{-1}\mu_K^{[1]}\right]\Bigr\}\dint t\text{,}
	\end{align*}
	which is exactly the assertion of the lemma.
\end{proof}
	
Finally, we apply Gronwalls lemma again to complete this subsection and get the boundedness result for $v$.\\

\begin{theorem}\label{theorem319}%%Satz 3.19
  Let
	\begin{equation*}
	  \overline{C}(\alpha) := \sum_{K=1}^{N}\alpha_K\mu_K^{[2]}\left(\sum_{K=1}^{N}\alpha_K\kappa_K^{[1]}\right)^{\!-1}\text{.}
	\end{equation*}
	Then $\overline{C}(\alpha)$ is positive and uniformly bounded in $\alpha$ and the estimate
	\begin{align*}
	  &\mathrel{\phantom{=}} \norm{\dot{v}(\tau,\vdot)}_{\Lebesgue[\rho](\Omega,\R^n)}^2 +
		                         \sum_{K=1}^{N}\alpha_K\kappa_K^{[1]}\norm{\Jacobian v(\tau,\vdot)}_{\Lebesgue{2}(\Omega,\R^{n\times d})}^2\\
		&\leq                  \Biggl\{\exp\bigl((nd)^2M_1\overline{C}(\alpha)\tau/2\bigr)\Biggl[\norm{u_1-\tilde{u}_1}_{\Lebesgue[\rho]{2}(\Omega,\R^n)}^2 \mathop{+}\\
		&\mathrel{\phantom{=}} \mathop{+} \sum_{K=1}^{N}\alpha_K\mu_K^{[1]}\norm{\Jacobian u_0-\Jacobian\tilde{u}_0}_{\Lebesgue{2}(\Omega,\R^{n\times d})}^2\Biggr]^{1/2} \mathop{+}\\
		&\mathrel{\phantom{=}} \mathop{+} \int_{0}^{\tau}\norm{(f-\tilde{f})(t,\vdot)}_{\Lebesgue{2}(\Omega,\R^n)}\exp\bigl((nd)^2M_1\overline{C}(\alpha)(\tau-t)/2\bigr)\dint t \mathop{+}\\
		&\mathrel{\phantom{=}} \mathop{+} \sum_{K=1}^{N}\abs{\alpha_K-\tilde{\alpha}_K}\rho_{\mathrm{min}}^{-1}\left[\sqrt{n\vol(\Omega)}\mu_K^{[4]}+dnM_0\mu_K^{[1]}\right] \mathop{\times}\\
		&\mathrel{\phantom{=}} \mathop{\times} \frac{2}{n^2dM_1\overline{C}(\alpha)}\left(\exp\bigl((nd)^2M_1\overline{C}(\alpha)\tau/2\bigr)-1\right)\Biggr\}^2
	\end{align*}
	holds true.\\
\end{theorem}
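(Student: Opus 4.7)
The approach is to feed the integral inequality of Lemma~\ref{LemmaUpperBoundv} directly into Gronwall's Lemma~\ref{Gronwall} with exponent $p=1/2$, then simplify the resulting expression by exploiting that the dominating function $b(t)$ we will construct is constant in $t$.

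First I would dispatch the auxiliary claim on $\overline{C}(\alpha)$: positivity is immediate from $\mu_K^{[2]},\alpha_K\geq 0$ and from the strict positivity of $\sum_K\alpha_K\kappa_K^{[1]}\geq\kappa(\alpha)>0$ assumed in Theorem~\ref{MainTheorem}, and the uniform boundedness is simply the condition that $\overline{C}(\alpha)$ be a coordinate of a point lying in the bounded set $\mathcal{M}$. Next I set
\begin{equation*}
\psi(\tau) := \norm{\dot{v}(\tau,\vdot)}_{\Lebesgue[\rho]{2}(\Omega,\R^n)}^2 + \sum_{K=1}^{N}\alpha_K\kappa_K^{[1]}\norm{\Jacobian v(\tau,\vdot)}_{\Lebesgue{2}(\Omega,\R^{n\times d})}^2
\end{equation*}
and rewrite the bound of Lemma~\ref{LemmaUpperBoundv} in the canonical form $\psi(\tau)\leq a+\int_0^\tau b(t)\psi(t)\,dt+\int_0^\tau k(t)\psi(t)^{1/2}\,dt$. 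Two identifications do the job: the summand $(nd)^2M_1\sum_K\mu_K^{[2]}\alpha_K\int_0^\tau\norm{\Jacobian v}_{\Lebesgue{2}}^2\,dt$ is majorized by $(nd)^2M_1\,\overline{C}(\alpha)\int_0^\tau\psi(t)\,dt$ through the elementary pointwise bound $\norm{\Jacobian v(t,\vdot)}_{\Lebesgue{2}}^2\leq\psi(t)\bigl(\sum_K\alpha_K\kappa_K^{[1]}\bigr)^{-1}$, so we may take $b(t)\equiv(nd)^2M_1\,\overline{C}(\alpha)$. The curly brace in the final integral of Lemma~\ref{LemmaUpperBoundv} is, up to the obvious transcription typo between its statement and its proof, exactly $\psi(t)^{1/2}$, and so
\begin{equation*}
k(t) := 2\norm{(f-\tilde{f})(t,\vdot)}_{\Lebesgue{2}(\Omega,\R^n)} + 2\sum_{K=1}^{N}\abs{\alpha_K-\tilde{\alpha}_K}d\bigl[\sqrt{n\vol(\Omega)}\rho_{\mathrm{min}}^{-1}\mu_K^{[4]}+dnM_0\rho_{\mathrm{min}}^{-1}\mu_K^{[1]}\bigr],
\end{equation*}
while $a$ collects the squared initial-data terms.

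With these data Gronwall's lemma yields $\psi(\tau)\leq\exp\bigl(\int_0^\tau b\bigr)\bigl[a^{1/2}+\tfrac{1}{2}\int_0^\tau k(t)\exp\bigl(-\tfrac{1}{2}\int_0^t b\bigr)\,dt\bigr]^{2}$. Because $b$ is constant in $t$, $\int_0^\tau b\,dt=(nd)^2M_1\overline{C}(\alpha)\tau$, and I absorb $\exp(\int_0^\tau b)=\exp((nd)^2M_1\overline{C}(\alpha)\tau/2)^{2}$ into the squared bracket. The $\norm{f-\tilde f}$ part of $k(t)$ then produces exactly the stated $\int_0^\tau\norm{(f-\tilde f)(t,\vdot)}_{\Lebesgue{2}}\exp((nd)^2M_1\overline{C}(\alpha)(\tau-t)/2)\,dt$, while the $\abs{\alpha_K-\tilde\alpha_K}$ part, being independent of $t$, factors out of $\int_0^\tau\exp((nd)^2M_1\overline{C}(\alpha)(\tau-t)/2)\,dt=\frac{2}{(nd)^2M_1\overline{C}(\alpha)}\bigl(\exp((nd)^2M_1\overline{C}(\alpha)\tau/2)-1\bigr)$; the outer factor $d$ combines with $(nd)^2=n^2d^2$ in the denominator to yield the announced $\frac{2}{n^2dM_1\overline{C}(\alpha)}$.

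The only subtle step is the pointwise domination $\norm{\Jacobian v(t,\vdot)}_{\Lebesgue{2}}^2\leq\psi(t)/\sum_K\alpha_K\kappa_K^{[1]}$, and this is precisely the trick that both fits the inequality into Gronwall form with a \emph{constant} $b$ and causes $\overline{C}(\alpha)$ to appear in the exponents of the final bound. Everything else is bookkeeping: the explicit evaluation of the two elementary $t$-integrals together with matching the $p=1/2$ output of Lemma~\ref{Gronwall} term-by-term with the claimed inequality.
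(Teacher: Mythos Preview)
Your Gronwall argument is correct and is exactly what the paper intends by ``an appropriate application of Gronwall's lemma'' to Lemma~\ref{LemmaUpperBoundv}; in particular your identifications of $\psi$, $a$, the constant $b(t)\equiv(nd)^2M_1\overline{C}(\alpha)$ via the pointwise domination $\norm{\Jacobian v(t,\vdot)}_{\Lebesgue{2}}^2\leq\psi(t)/\sum_K\alpha_K\kappa_K^{[1]}$, and $k(t)$ match the paper's route, and your evaluation of the two $t$-integrals recovers the stated bound term by term.

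The one place where you diverge from the paper is the side claim that $\overline{C}(\alpha)$ is positive and uniformly bounded in $\alpha$. Invoking membership of $\overline{C}(\alpha)$ in the bounded set $\mathcal{M}$ is circular here: that is a \emph{hypothesis} of Theorem~\ref{MainTheorem}, not something established at this point, whereas Theorem~\ref{theorem319} asserts the bound outright. The paper instead proves the elementary $\alpha$-independent sandwich
\[
0<\frac{\min_K\mu_K^{[2]}}{\max_K\kappa_K^{[1]}}\leq\overline{C}(\alpha)\leq\frac{\max_K\mu_K^{[2]}}{\min_K\kappa_K^{[1]}}<\infty,
\]
obtained by factoring $\sum_K\alpha_K$ from numerator and denominator. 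You should replace your boundedness argument with this estimate; the rest of your proof stands.
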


\begin{proof}
Applying some straightforward estimates we derive
\begin{equation*}
	  0 <    \frac{\min\mu_K^{[2]}}{\max\kappa_K^{[1]}}
		  \leq \overline{C}(\alpha)
			\leq \frac{\max\mu_K^{[2]}}{\min\kappa_K^{[1]}}
			<    \infty
	\end{equation*}
and hence that $\overline{C}(\alpha)$ is positive and uniformly bounded in $\alpha$. The assertion now follows from Lemma \ref{LemmaUpperBoundv} and an appropriate application of Gronwall's lemma (Lemma \ref{Gronwall}).
\end{proof}

%%%%%%%%%%%%%%%%%%%%%%%%%%%%%%%%%%%%%%%%%%%%%

\subsection{An upper bound for $\dot{u}-\dot{\tilde{u}}$}

In the next step we prove a upper norm bound for $z=\dot{u}-\dot{\tilde{u}}$. To this end we define $w:=\dot{u}$ and at first prove an upper bound for $w$. Differentiating the PDE system \eqref{PDE} with repect to $t$ and taking the identity (\ref{Premise311}) into account we see that $w$ solves the initial-boundary value problem
\begin{eqnarray}
    &\mathrel{\phantom{=}}& \rho(x)\ddot{w}(t,x) - \sum_{k=1}^{n}\sum_{\ell=1}^{d}\sum_{i=1}^{n}\sum_{j=1}^{d}\sum_{K=1}^{N}
	                           e_k\alpha_K\partial_\ell\bigl[\partial_{Y_{ij}}\partial_{Y_{k\ell}}C_K\bigl(x,\Jacobian u(t,x)\bigr)\partial_j w_i(t,x)\bigr]\nonumber\\
	  &&\label{PDEw}\\
	  &=&                     \rho(x)\dot{f}(t,x)\nonumber
\end{eqnarray}
along with homogeneous Dirichlet boundary values,
\begin{equation}\label{BoundaryValuesw}
  w(t,x) = 0,\qquad (t,x)\in[0,T]\times\partial\Omega
\end{equation}
and initial values
\begin{equation}\label{InitialValuesw}
  \begin{aligned}
    w(0,x)       &=  \dot{u}(0,x)
	                =  u_1(x)\text{,}\\
	  \dot{w}(0,x) &=  \rho(x)^{-1}\sum_{K=1}^{N}\alpha_K\div\bigl[\nablabf_YC_K\bigl(x,\Jacobian u(t,x)\bigr)\bigr] + f(0,x)
	                =: u_2(x)
  \end{aligned}
\end{equation}
for all $x\in\Omega$, if only $u$ solves the IBVP \eqref{PDE}--\eqref{InitialValues}.\\

\begin{lemma}\label{theorem320}%%Satz 3.20
  Let $w$ be a solution of the IBVP \eqref{PDEw}--\eqref{InitialValuesw}.
	Then
	\begin{align*}
	  &\mathrel{\phantom{=}} \norm{\dot{w}(\tau,\vdot)}_{\Lebesgue[\rho]{2}(\Omega,\R^n)}^2 +
		                         \sum_{K=1}^{N}\alpha_K\kappa_K^{[1]}\norm{\Jacobian w(\tau,\vdot)}_{\Lebesgue{2}(\Omega,\R^{n\times d})}^2\\
		&\leq                  \Biggl\{\exp\bigl((nd)^2M_1\overline{C}(\alpha)\tau/2\bigr)\left[\norm{u_2}_{\Lebesgue[\rho]{2}(\Omega,\R^n)}^2 +
		                         \sum_{K=1}^{N}\alpha_K\mu_K^{[1]}\norm{\Jacobian u_1}_{\Lebesgue{2}(\Omega,\R^{n\times d})}^2\right]^{1/2} \mathop{+}\\
		&\mathrel{\phantom{=}} \mathop{+} \int_{0}^{\tau}\norm{f(t,\vdot)}_{\Lebesgue{2}(\Omega,\R^n)}
		                         \exp\bigl((nd)^2M_1\overline{C}(\alpha)(\tau-t)/2\bigr)\dint t\Biggr\}^2
	\end{align*}
	with $\overline{C}$ from equation (\ref{DefOverlineC}).
\end{lemma}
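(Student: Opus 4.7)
The plan is to mirror, step by step, the energy argument that gave Lemma~\ref{LemmaUpperBoundv} and Theorem~\ref{theorem319}, now for the differentiated system (\ref{PDEw})--(\ref{InitialValuesw}) rather than for $v$. The function $w=\dot u$ satisfies a linear hyperbolic system whose coefficients $\alpha_K\partial_{Y_{ij}}\partial_{Y_{k\ell}}C_K(x,\Jacobian u(t,x))$ depend on time through $\Jacobian u$; the only essentially new ingredient is the bookkeeping of the time derivative that lands on these coefficients when manufacturing an energy identity.

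Concretely, I would multiply (\ref{PDEw}) componentwise by $2\dot w$ and integrate over $\Omega$. The mass term yields $\partial_t\norm{\dot w(t,\vdot)}_{\Lebesgue[\rho]{2}(\Omega,\R^n)}^2$. For the stress term I would integrate by parts in $x$, the homogeneous Dirichlet boundary condition (\ref{BoundaryValuesw}) killing the boundary contribution, producing
\[-2\sum_{K=1}^{N}\alpha_K\int_\Omega\sum_{k,\ell,i,j}\partial_{Y_{ij}}\partial_{Y_{k\ell}}C_K\bigl(x,\Jacobian u(t,x)\bigr)\,\partial_j w_i(t,x)\,\partial_\ell\dot w_k(t,x)\,\dint x.\]
Exploiting the symmetry of $\partial_{Y_{ij}}\partial_{Y_{k\ell}}C_K$ under $(i,j)\leftrightarrow(k,\ell)$, this term equals $-\partial_t$ of $\sum_K\alpha_K\int_\Omega\sum_{k,\ell,i,j}\partial_{Y_{ij}}\partial_{Y_{k\ell}}C_K(x,\Jacobian u)\,\partial_j w_i\,\partial_\ell w_k\,\dint x$, plus an error term in which the time derivative hits the coefficient via the chain rule and produces the factor $\partial_{Y_{pq}}\partial_{Y_{ij}}\partial_{Y_{k\ell}}C_K\cdot\partial_q\dot u_p$.

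Integrating in $t$ over $[0,\tau]$ and invoking (\ref{EstimateDDCK}) at both endpoints, the quadratic form at time $\tau$ is bounded below by $\norm{\dot w(\tau,\vdot)}_{\Lebesgue[\rho]{2}}^2+\sum_K\alpha_K\kappa_K^{[1]}\norm{\Jacobian w(\tau,\vdot)}_{\Lebesgue{2}}^2$ and the initial quadratic form at $t=0$ above by $\norm{u_2}_{\Lebesgue[\rho]{2}}^2+\sum_K\alpha_K\mu_K^{[1]}\norm{\Jacobian u_1}_{\Lebesgue{2}}^2$. The chain-rule error term is dominated, via the uniform bound (\ref{Bound35}) on $\partial_Y\partial_Y\partial_Y C_K$ together with the a priori estimate $\norm{\partial_\ell\dot u}_{\Lebesgue{\infty}}\le M_1$, by $(nd)^2M_1\sum_K\mu_K^{[2]}\alpha_K\int_0^\tau\norm{\Jacobian w(t,\vdot)}_{\Lebesgue{2}}^2\,\dint t$, exactly as in the proof of Lemma~\ref{LemmaUpperBoundv}. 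The forcing contribution $2\int_0^\tau\inner{\rho\dot w(t,\vdot)}{\dot f(t,\vdot)}_{\Lebesgue{2}(\Omega,\R^n)}\dint t$ is handled by Cauchy--Schwarz, and an application of Gronwall's lemma with $p=1/2$, $b(t)\equiv(nd)^2M_1\overline C(\alpha)/2$, and $k(t)=2\norm{\dot f(t,\vdot)}_{\Lebesgue{2}}$ delivers the claimed exponential estimate verbatim as in Theorem~\ref{theorem319} (the displayed bound appears to contain a minor misprint $\norm{f}$ in place of $\norm{\dot f}$). The hardest part will be the careful index manipulation required to rewrite $\partial_jw_i\,\partial_\ell\dot w_k$ as a total time derivative modulo the three-derivative error term; once that symmetrization is in hand, the remainder of the argument is a mechanical transcription of the scheme used for $v$.
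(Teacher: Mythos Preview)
Your proposal is correct and follows essentially the same route as the paper: multiply (\ref{PDEw}) by $2\dot w$, integrate by parts using (\ref{BoundaryValuesw}), symmetrize to obtain a total time derivative of the quadratic form plus the chain-rule remainder involving $\nablabf_Y\partial_{Y_{ij}}\partial_{Y_{k\ell}}C_K\cdot\Jacobian\dot u$, then estimate via (\ref{EstimateDDCK}), (\ref{Bound35}), (\ref{APriori316}), Cauchy--Schwarz and Gronwall with $p=1/2$. Your remark that the forcing term should carry $\norm{\dot f(t,\vdot)}_{\Lebesgue{2}}$ rather than $\norm{f(t,\vdot)}_{\Lebesgue{2}}$ is well taken; note also that the Gronwall coefficient is $b\equiv(nd)^2M_1\overline{C}(\alpha)$ (the factor $1/2$ in the exponent arises only after pulling $\exp(\int_0^\tau b)$ inside the square).
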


\begin{proof}
  An easy calculation shows that
	\begin{align*}
	  &\mathrel{\phantom{=}} \inner{2\dot{w}(t,\vdot)}{\sum_{k=1}^{n}\sum_{\ell=1}^{d}\sum_{i=1}^{n}\sum_{j=1}^{d}\sum_{K=1}^{N}
		                         e_k\alpha_K\partial_\ell\bigl[\partial_{Y_{ij}}\partial_{Y_{k\ell}}
														 C_K\bigl(\vdot,\Jacobian u(t,\vdot)\bigr)\partial_jw_i(t,\vdot)\bigr]}_{\!\!\Lebesgue{2}(\Omega,\R^n)}\\
		&=                     -\partial_t\sum_{k=1}^{n}\sum_{\ell=1}^{d}\sum_{i=1}^{n}\sum_{j=1}^{d}\sum_{K=1}^{N}
		                         \alpha_K\int_{\Omega}\partial_{Y_{ij}}\partial_{Y_{k\ell}}
														 C_K\bigl(x,\Jacobian u(t,x)\bigr)\partial_jw_i(t,x)\dint x \mathop{+}\\
	  &\mathrel{\phantom{=}} \mathop{+} \sum_{k=1}^{n}\sum_{\ell=1}^{d}\sum_{i=1}^{n}\sum_{j=1}^{d}\sum_{K=1}^{N}
		                         \alpha_K\minner{\nablabf_Y\partial_{Y_{ij}}\partial_{Y_{k\ell}}
														 C_K\bigl(x,\Jacobian u(t,x)\bigr)}{\Jacobian\dot{u}(t,x)}\partial_jw_i(t,x)\partial_\ell w_k(t,x)\dint x\text{.}
	\end{align*}
	Hence multiplying equation (\ref{PDEw}) by $2\dot{w}$ yields
	\begin{align*}
	  &\mathrel{\phantom{=}} \partial_t\Biggl\{\norm{\dot{w}(t,\cdot)}_{\Lebesgue[\rho]{2}(\Omega,\R^n)}^2 \mathop{+}\\
		&\mathrel{\phantom{=}} \mathop{+}\sum_{K=1}^{n}\alpha_K\sum_{k=1}^{n}\sum_{\ell=1}^{d}\sum_{i=1}^{n}\sum_{j=1}^{d}
		                         \int_{\Omega}\partial_{Y_{ij}}\partial_{Y_{k\ell}}C_K\bigl(x,\Jacobian u(t,x)\bigr)\partial_jw_i(t,x)\partial_\ell w_k(t,x)\dint x\Biggr\}\\
		&=                     2\inner{\rho\dot{w}(t,\vdot)}{\dot{f}(t,\vdot)}_{\Lebesgue{2}(\Omega,\R^n)} \mathop{+}\\
		&\mathrel{\phantom{=}} \mathop{+} \sum_{K=1}^{N}\alpha_K\sum_{k=1}^{n}\sum_{\ell=1}^{d}\sum_{i=1}^{n}\sum_{j=1}^{d}
		                         \int_{\Omega}\minner{\nablabf_Y\partial_{Y_{ij}}\partial_{Y_{k\ell}}C_K\bigl(x,\Jacobian u(t,x)\bigr)}{\Jacobian\dot{u}(t,x)} \mathop{\times}\\
		&\mathrel{\phantom{=}} \mathop{\times} \partial_jw_i(t,x)\partial_\ell w_k(t,x)\dint x\text{.}												
	\end{align*}
	With the very same techniques we used in the proof of Lemma \ref{LemmaUpperBoundu},
	that is integration on $[0,\tau]$, application of assumptions (\ref{EstimateDDCK}), (\ref{Bound35}) as well as a\,priori estimates (\ref{APriori316}),
	the Cauchy-Schwarz inequality and Gronwall's lemma,
	we accomplish the desired bound.
\end{proof}

So far we have an upper bound for $w=\dot{u}$ but not for the difference $z$ which we now are heading for. To this end we aim to refine the estimate in Lemma \ref{theorem320} by adding $\norm{u(\tau,\vdot)}_{\Sobolev{2}(\Omega,\R^n)}^2$ and subsequently extracting the square root on both parts. This yields the estimate
\begin{align*}%%Satz 3.23
	&\mathrel{\phantom{=}} \Biggl\{\norm{\ddot{u}(\tau,\vdot)}_{\Lebesgue[\rho]{2}(\Omega,\R^n)}^2 +
	                         \sum_{K=1}^{N}\alpha_K\kappa_K^{[1]}\norm{\Jacobian\dot{u}(\tau,\vdot)}_{\Lebesgue{2}(\Omega,\R^{n\times d})}^2 +
	                       \norm{u(\tau,\vdot)}_{\Sobolev{2}(\Omega,\R^n)}^2\Biggr\}^{1/2}\\
	&\leq                  \bigl(1+\hat{C}(\alpha)\bigr)\Biggl\{\exp\bigl((nd)^2M_1\overline{C}(\alpha)\tau/2\bigr) \mathop{\times}\\
	&\mathrel{\phantom{=}} \mathop{\times}\left[\norm{u_2}_{\Lebesgue[\rho]{2}(\Omega,\R^n)}^2 +
	                         \sum_{K=1}^{N}\alpha_K\mu_K^{[1]}\norm{\Jacobian u_1}_{\Lebesgue{2}(\Omega,\R^{n\times d})}^2\right]^{1/2} \mathop{+}\\
	&\mathrel{\phantom{=}} \mathop{+} \int_{0}^{\tau}\norm{f(t,\vdot)}_{\Lebesgue{2}(\Omega,\R^n)}
	                         \exp\bigl((nd)^2M_1\overline{C}(\alpha)(\tau-t)/2\bigr)\dint t\Biggr\} \mathop{+}\\
	&\mathrel{\phantom{=}} \mathop{+} \hat{C}(\alpha)\left[\rho_{\mathrm{max}}\norm{f(\tau,\vdot)}_{\Lebesgue{2}(\Omega,\R^n)} +
	                         d\sqrt{n\vol(\Omega)}\sum_{K=1}^{N}\alpha_K\mu_K^{[4]}\right]\text{,}
\end{align*}
where
\begin{equation*}
  \hat{C}(\alpha) := \frac{\hat{K}}{1-\sqrt{1-\varepsilon}}\sum_{K=1}^{N}\alpha_K\mu_K^{[1]}\left(\sum_{K=1}^{N}\alpha_K\kappa_K^{[1]}\right)^{-1}\text{.}
\end{equation*}
To prove this inequality we only have to show that
\begin{align}%%Lemma 3.24
                         \norm{u(\tau,\vdot)}_{\Sobolev{2}(\Omega,\R^n)}
	&\leq                  \hat{C}(\alpha)\Biggl\{\exp\bigl((nd)^2M_1\overline{C}(\alpha)\tau/2\bigr) \mathop{\times}\nonumber\\
	&\mathrel{\phantom{=}} \mathop{\times}\left[\norm{u_2}_{\Lebesgue[\rho]{2}(\Omega,\R^n)}^2 +
	                         \sum_{K=1}^{N}\alpha_K\mu_K^{[1]}\norm{\Jacobian u_1}_{\Lebesgue{2}(\Omega,\R^{n\times d})}^2\right]^{1/2} \mathop{+}\nonumber\\
	&\mathrel{\phantom{=}} \mathop{+} \int_{0}^{\tau}\norm{f(t,\vdot)}_{\Lebesgue{2}(\Omega,\R^n)}
	                         \exp\bigl((nd)^2M_1\overline{C}(\alpha)(\tau-t)/2\bigr)\dint t \mathop{+}\nonumber\\
	&\mathrel{\phantom{=}} \mathop{+} \rho_{\mathrm{max}}\norm{f(\tau,\vdot)}_{\Lebesgue{2}(\Omega,\R^n)} +
	                         d\sqrt{n\vol(\Omega)}\sum_{K=1}^{N}\alpha_K\mu_K^{[4]}\Biggr\}\label{EqFromLemma324}
\end{align}
holds true.
To see this, we apply the chain rule to equation (\ref{PDE}) and get
\begin{equation}\label{PDEforu}
  \begin{split}
    &\mathrel{\phantom{=}} \sum_{k=1}^{n}\sum_{\ell=1}^{d}\sum_{i=1}^{n}\sum_{j=1}^{d}
		                         a_{k\ell ij}\bigl(x,\Jacobian u(t,x)\bigr)e_k\partial_\ell\partial_ju_i(t,x)\\
	  &=                     \rho(x)\ddot{u}(t,x) - \rho(x)f(x) -
		                         \sum_{k=1}^{n}\sum_{\ell=1}^{d}\sum_{K=1}^{N}\alpha_Ke_k\partial_{\ell}\partial_{Y_{k\ell}}C_K\bigl(x,\Jacobian u(t,x)\bigr)
  \end{split}
\end{equation}
with coefficients
\begin{equation*}
  a_{k\ell ij}\bigl(x,\Jacobian u(t,x)\bigr) := \sum_{K=1}^{N}\alpha_K\partial_{Y_{k\ell}}\partial_{Y_{ij}}C_K\bigl(x,\Jacobian u(t,x)\bigr)
\end{equation*}
being bounded in $t$ and measurable in $x$. To verify \eqref{EqFromLemma324} we want to use the Cordes condition (Theorem \ref{theorem:Cordes}). Doing so we first have to check whether the premisees of Theorem \ref{theorem:Cordes} are satisfied. Using a bijection $\varphi:\{1,\ldots,n\}\times\{1,\ldots,d\}\to\{1,\ldots,nd\}$ we define the matrix $B := (b_{pq})$ by $b_{pq} := a_{\varphi^{-1}(p)\varphi^{-1}(q)}$. Obviously, we have 
$$\sum_{k=1}^{n}\sum_{\ell=1}^{d}a_{k\ell k\ell}(x,Y) = \tr\bigl(B(x,y)\bigr).$$
Since $B$ is symmetric because of the smoothness conditions of $C_K$, an easy calculation shows that
\begin{equation*}
  \sum_{k=1}^{n}\sum_{\ell=1}^{d}\sum_{i=1}^{n}\sum_{j=1}^{d}a_{k\ell ij}^2(x,Y) = \sum_{p=1}^{nd}\lambda_p^2\bigl(B(x,Y)\bigr)\text{,}
\end{equation*}
where $\lambda_p$ is the $p$-th eigenvalue.
Hence, we have
\begin{align*}
  &\mathrel{\phantom{=}} \sum_{k=1}^{n}\sum_{\ell=1}^{d}\sum_{i=1}^{n}\sum_{j=1}^{d}a_{k\ell ij}^2(x,Y)\left(\sum_{k=1}^{n}\sum_{\ell=1}^{d}
	                         a_{k\ell k\ell}(x,Y)\right)^{-2}\\
	&=                     \sum_{p=1}^{nd}\lambda_p^2\left(\bigl[\tr\bigl(B(x,Y)\bigr)\bigr]^{-1}B(x,Y)\right)\text{.}
\end{align*}
A consequence of this identity and estimate (\ref{EstimateCK}) is
\begin{equation*}
  \kappa\norm{y}_2^2 \leq \inner{By}{y}
	                   \leq \mu\norm{y}_2^2 \qquad \mbox{for all } y\in\R^{nd}
\end{equation*}
with $\kappa$ and $\mu$ from Theorem \ref{MainTheorem}.
This implies the lower bound
\begin{equation*}
  \bigl[\tr\bigl(B(x,Y)\bigr)\bigr]^{-1}\kappa \geq \frac{\kappa}{nd\mu}
	                                             =:   \lambda
\end{equation*}
for the eigenvalues of $\bigl[\tr\bigl(B(x,Y)\bigr)\bigr]^{-1}B$.
Now, from (\ref{Dimensions}) we get
\begin{equation*}
  \frac{nd-2}{nd(nd-1)} < \lambda
	                      < \frac{1}{nd-1}\text{,}
\end{equation*}
which is equivalent to
\begin{equation*}
  nd(nd-1)\lambda^2-2(nd-1)\lambda+1 < \frac{1}{nd-1}\text{.}
\end{equation*}
On the other hand, the very same reasoning as used in Section 1.2 of \cite{Maugeri200004} gives
\begin{align*}
  &\mathrel{\phantom{=}} \sum_{k=1}^{n}\sum_{\ell=1}^{d}\sum_{i=1}^{n}\sum_{j=1}^{d}a_{k\ell ij}^2(x,Y)
	                         \left(\sum_{k=1}^{n}\sum_{\ell=1}^{d}a_{k\ell k\ell}(x,Y)\right)^{-2}\\
	&\leq                  nd(nd-1)\lambda^2 - 2(nd-1) + 1\text{.}
\end{align*}
A combination of these results shows the validity of inequality (\ref{Cordes}). Thus all conditions of Theorem \ref{theorem:Cordes} are satisfied
and we may apply now this theorem to the partial differential equation (\ref{PDEforu}) getting
\begin{align*}
                         \norm{u(\tau,\vdot)}_{\Sobolev{2}(\Omega,\R^n)}
	&\leq                  C(\alpha)\Biggl(\norm{\ddot{u}(\tau,\vdot)}_{\Lebesgue[\rho]{2}(\Omega,\R^n)} +
	                         \rho_{\mathrm{max}}\norm{f(\tau,\vdot)}_{\Lebesgue{2}(\Omega,\R^n)} \mathop{+}\\
	&\mathrel{\phantom{=}} \mathop{+} \eigennorm[auto]{\sum_{k=1}^{n}\sum_{\ell=1}^{d}\sum_{K=1}^{N}\alpha_Ke_k\partial_\ell\partial_{Y_{k\ell}}
	                         C_K\bigl(x,\Jacobian u(\tau,\vdot)\bigr)}_{\Lebesgue{2}(\Omega,\R^n)}\Biggr)
\end{align*}
with
\begin{equation*}
  C(\alpha) := \frac{\hat{K}}{1-\sqrt{1-\varepsilon}}\esssup_{x\in\Omega}\frac{\displaystyle\sum_{k=1}^{n}\sum_{\ell=1}^{d}\sum_{K=1}^{N}
	               \alpha_K\partial_{Y_{k\ell}}\partial_{Y_{k\ell}}C_K\bigl(x,\Jacobian u(t,x)\bigr)}
	               {\displaystyle\sum_{k=1}^{n}\sum_{\ell=1}^{d}\sum_{i=1}^{n}\sum_{j=1}^{d}\left(\sum_{K=1}^{N}\alpha_K
								 \partial_{Y_{k\ell}}\partial_{Y_{ij}}C_K\bigl(x,\Jacobian u(t,x)\bigr)\right)^2}\text{.}
\end{equation*}
An evident implication of premise (\ref{EstimateCK}) is that $C(\alpha) \leq \hat{C}(\alpha)$.
Furthermore, from (\ref{Bound37}) we deduce
\begin{equation*}
       \eigennorm[auto]{\sum_{k=1}^{n}\sum_{\ell=1}^{d}\sum_{K=1}^{N}\alpha_Ke_k\partial_\ell\partial_{Y_{k\ell}}
	       C_K\bigl(x,\Jacobian u(\tau,\vdot)\bigr)}_{\Lebesgue{2}(\Omega,\R^n)}
	\leq d\sqrt{n\vol(\Omega)}\sum_{K=1}^{N}\alpha_K\mu_K^{[4]}\text{.}
\end{equation*}
Applying Lemma \ref{theorem320} finally completes the proof of estimate (\ref{EqFromLemma324}).\\[1ex]

Recall that our aim in this section is to bound $z=\dot{u}-\dot{\tilde{u}}$. Again let $w$ be a solution to the IBVP \eqref{PDEw}--\eqref{InitialValuesw} and
let $\tilde{w}$ be a solution to the according IBVP with corresponding coefficients $\alpha_1,\ldots,\alpha_N$,
right-hand side $\rho\dot{\tilde{f}}$ and initial values $\tilde{u}_1$ and $\tilde{u}_2$.
Then $z = w-\tilde{w}$ is a solution of the partial differential equation
\begin{equation}\label{PDEz}
  \rho(x)\ddot{z}(t,x) = \rho(x)\bigl(\dot{f}-\dot{\tilde{f}}\bigr)(t,x) + \Sigma_1(t,x) + \Sigma_2(t,x) + \partial z(t,x)\text{,}
\end{equation}
where
\begin{align*}
  \Sigma_1(t,x)   &:=                     \sum_{k=1}^{n}\sum_{\ell=1}^{d}\sum_{i=1}^{n}\sum_{j=1}^{d}\sum_{K=1}^{N}
	                                          e_k\alpha_K\partial_{\ell}\bigl\{\bigl[\partial_{Y_{ij}}\partial_{Y_{k\ell}}
																						C_K\bigl(x,\Jacobian u(t,x)\bigr) \mathop{+}\\
	                &\mathrel{\phantom{:=}} \mathop{-} \partial_{Y_{ij}}\partial_{Y_{k\ell}}
									                          C_K\bigl(x,\Jacobian\tilde{u}(t,x)\bigr)\bigr]\partial_j\tilde{w}_i(t,x)\bigr\}\text{,}\\
	\Sigma_2(t,x)   &:=                     \sum_{k=1}^{n}\sum_{\ell=1}^{d}\sum_{i=1}^{n}\sum_{j=1}^{d}\sum_{K=1}^{N}
	                                          e_k(\alpha_K-\tilde{\alpha}_K)\partial_{\ell}\bigl[\partial_{Y_{ij}}\partial_{Y_{k\ell}}
																						C_K\bigl(x,\Jacobian\tilde{u}(t,x)\bigr)\partial_j\tilde{w}_i(t,x)\bigr]\text{,}
\intertext{and}
  \partial z(t,x) &:=                     \sum_{k=1}^{n}\sum_{\ell=1}^{d}\sum_{i=1}^{n}\sum_{j=1}^{d}\sum_{K=1}^{N}
		                                        e_k\alpha_K\partial_\ell\bigl[\partial_{Y_{ij}}\partial_{Y_{k\ell}}
																				    C_K\bigl(x,\Jacobian u(t,x)\bigr)\partial_jz_i(t,x)\bigr]\text{.}
\end{align*}
Furthermore $z$ obeys the initial data
\begin{equation}\label{InitialValuesz}
  \begin{split}
    z(0,x)       &=                     (u_1-\tilde{u}_1)(x)\text{,}\\
	  \dot{z}(0,x) &=                     \rho(x)^{-1}\sum_{K=1}^{N}\alpha_K\div\bigl[\nablabf_Y
		                                      C_K\bigl(x,\Jacobian u_0(x)\bigr)-\nablabf_YC_K\bigl(x,\Jacobian\tilde{u}_0(x)\bigr)\bigr] \mathop{+}\\
	               &\mathrel{\phantom{=}} \mathop{+} \rho(x)^{-1}\sum_{K=1}^{N}(\alpha_K-\tilde{\alpha}_K)
								                          \div\bigl[\nablabf_YC_K\bigl(x,\Jacobian\tilde{u}_0(x)\bigr)\bigr] + (f-\tilde{f})(0,x)\\
							   &=                     (u_2-\tilde{u}_2)(x)
	\end{split}
\end{equation}
and boundary values
\begin{equation}\label{BoundaryValuesz}
  z(t,x) = 0\text{.}
\end{equation}

\begin{lemma}%%Lemma 3.25
  If $z$ is a solution to problem (\ref{PDEz}), (\ref{InitialValuesz}, (\ref{BoundaryValuesz}), then
	\begin{align}
	  &\mathrel{\phantom{=}} \norm{\dot{z}(\tau,\vdot)}_{\Lebesgue[\rho]{2}(\Omega,\R^n)}^2 +
		                         \sum_{K=1}^{N}\alpha_K\kappa_K^{[1]}\norm{\Jacobian z(\tau,\vdot)}_{\Lebesgue{2}(\Omega,\R^{n\times d})}^2\nonumber\\
		&\leq                  \norm{u_2-\tilde{u}_2}_{\Lebesgue[\rho]{2}(\Omega,\R^n)}^2 +
		                         \sum_{K=1}^{N}\alpha_K\mu_K^{[1]}\norm{\Jacobian u_1-\Jacobian\tilde{u}_1}_{\Lebesgue{2}(\Omega,\R^{n\times d})}^2 \mathop{+}\nonumber\\
		&\mathrel{\phantom{=}} \mathop{+} (nd)^2M_1\left(\sum_{K=1}^{N}\alpha_K\kappa_K^{[1]}\right)^{-1}\sum_{K=1}^{N}\alpha_K\mu_K^{[2]} \mathop{\times}\nonumber\\
		&\mathrel{\phantom{=}} \mathop{\times} \int_{0}^{\tau}\left[\norm{\dot{z}(t,\vdot)}_{\Lebesgue[\rho]{2}(\Omega,\R^n)} +
		                         \sum_{K=1}^{N}\alpha_K\kappa_K^{[1]}\norm{\Jacobian z(t,\vdot)}_{\Lebesgue{2}(\Omega,R^{n\times d})}^2\right]\dint t \mathop{+}\nonumber\\
		&\mathrel{\phantom{=}} \mathop{+} 2\int_{0}^{\tau}\Biggl\{\norm{(\dot{f}-\dot{\tilde{f}})(t,\vdot)}_{\Lebesgue{2}(\Omega,\R^n)} \mathop{+}\nonumber\\
		&\mathrel{\phantom{=}} \mathop{+} \norm{\alpha-\tilde{\alpha}}_{\Sup}\rho_{\mathrm{min}}^{-1}
		                         \Biggl[2\sqrt{2\vol(\Omega)}(nd)^2\sum_{K=1}^{N}\left(M_1\mu_K^{[6]}+M_2\mu_K^{[1]}\right) \mathop{+}\nonumber\\
		&\mathrel{\phantom{=}} \mathop{+} 2\sqrt{2}M_0M_1n^{5/2}d^3\sum_{K=1}^{N}\mu_K^{[2]}\Biggr] \mathop{+}\nonumber\\
		&\mathrel{\phantom{=}} 2\sqrt{2}(nd)^{5/2}\rho_{\mathrm{min}}^{-1}\sum_{K=1}^{N}\alpha_K\left(M_1\mu_K^{[7]} + M_2\mu_K^{[2]} +
		                         M_1M_3nd\mu_K^{[3]}\right) \mathop{\times}\nonumber\\
		&\mathrel{\phantom{=}} \mathop{\times} \norm{\Jacobian v(t,\vdot)}_{\Lebesgue{2}(\Omega,\R^n)} +
		                         2\sqrt{2}M_1n^{5/2}d^2\rho_{\mathrm{min}}^{-1}\sum_{K=1}^{N}
														 \alpha_K\mu_K^{[2]}\norm{v(t,\vdot)}_{\Sobolev{2}(\Omega,\R^n)}\Biggr\} \mathop{\times}\nonumber\\
	  &\mathrel{\phantom{=}} \mathop{\times} \left\{\norm{\dot{z}(t,\vdot)}_{\Lebesgue[\rho]{2}(\Omega,\R^n)}^2 +
		                         \sum_{K=1}^{N}\alpha_K\kappa_K^{[1]}\norm{\Jacobian z(t,\vdot)}_{\Lebesgue{2}(\Omega,\R^{n\times d})}^2\right\}^{1/2}\dint t
		\label{EstimateForz}
	\end{align}
	and the initial values $u_2$, $\tilde{u}_2$ in (\ref{InitialValuesz}) satisfy
	\begin{align}
	  &\mathrel{\phantom{=}} \norm{u_2-\tilde{u}_2}_{\Lebesgue{2}(\Omega,\R^n)}\nonumber\\
		&\leq                  \sum_{K=1}^{N}\abs{\alpha_K-\tilde{\alpha}_K}\left(\sqrt{6\vol(\Omega)}n^{1/2}d\mu_K^{[4]} +
		                         \sqrt{6}nd\mu_K^{[1]}\norm{\tilde{u}_0}_{\Sobolev{2}(\Omega,\R^n)}\right) \mathop{+}\nonumber\\
		&\mathrel{\phantom{=}} \mathop{+} 3nd\sum_{K=1}^{N}\alpha_K\left(d^{1/2}\mu_K^{[5]} + \mu_K^{[1]} +
		                         nd^{3/2}M_3\mu_K^{[2]}\right)\norm{u_0-\tilde{u}_0}_{\Sobolev{2}(\Omega,\R^n)} \mathop{+}\nonumber\\
		&\mathrel{\phantom{=}} \mathop{+} \sqrt{3}\rho_{\mathrm{max}}\norm{(f-\tilde{f})(0,\vdot)}_{\Lebesgue{2}(\Omega,\R^n)}\text{.}\label{EstimateForIVz}
	\end{align}
\end{lemma}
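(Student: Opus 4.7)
The plan for estimate~\eqref{EstimateForz} is to repeat, on the equation~\eqref{PDEz} satisfied by $z=w-\tilde w$, the same energy method used in Lemmas~\ref{LemmaUpperBoundu} and~\ref{LemmaUpperBoundv}: I would multiply the equation by $2\dot z$, integrate over $\Omega$, and then integrate in time over $[0,\tau]$. The inertial term gives $\partial_t\|\dot z\|_{\Lebesgue[\rho]{2}(\Omega,\R^n)}^2$. The principal diffusion-type term $\inner{2\dot z}{\partial z}_{\Lebesgue{2}}$ is treated exactly as in Lemma~\ref{LemmaUpperBoundv}: integration by parts together with the symmetry~\eqref{Premise311} converts it into $-\partial_t$ of a quadratic form in $\Jacobian z$ with coefficients $\alpha_K\partial_{Y_{ij}}\partial_{Y_{k\ell}}C_K(x,\Jacobian u)$, controlled from below by~\eqref{EstimateDDCK}, plus a commutator remainder in which $\nablabf_Y\partial_{Y_{ij}}\partial_{Y_{k\ell}}C_K$ is contracted with $\Jacobian\dot u$ and bounded via~\eqref{Bound35} and the a\,priori bound $M_1$. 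After time-integration this yields the initial-data contributions in the first line of~\eqref{EstimateForz} and the third summand with its characteristic prefactor $(nd)^2M_1\sum_{K}\alpha_K\mu_K^{[2]}\bigl(\sum_{K}\alpha_K\kappa_K^{[1]}\bigr)^{-1}$.

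The genuinely new work compared with Lemma~\ref{LemmaUpperBoundv} is the contribution of $\Sigma_1$ and $\Sigma_2$ after pairing with $2\dot z$. For $\Sigma_1$ I would integrate by parts to shift $\partial_\ell$ onto $\dot z_k$, and express the difference $\partial_{Y_{ij}}\partial_{Y_{k\ell}}C_K(x,\Jacobian u)-\partial_{Y_{ij}}\partial_{Y_{k\ell}}C_K(x,\Jacobian\tilde u)$ by the fundamental theorem of calculus along the segment from $\Jacobian\tilde u$ to $\Jacobian u$; this produces a factor $\Jacobian v$ together with a third derivative of $C_K$ bounded via~\eqref{Bound36}. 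Combining this with the a\,priori bounds $M_1,M_2,M_3$ from~\eqref{APriori316}--\eqref{APriori317} and the pointwise bounds~\eqref{Bound35},~\eqref{Bound36},~\eqref{Bound39},~\eqref{Bound310}, together with Cauchy--Schwarz in $\Omega$, yields the summands in~\eqref{EstimateForz} proportional to $\|\Jacobian v\|_{\Lebesgue{2}}$ and $\|v\|_{\Sobolev{2}}$. For $\Sigma_2$ one pulls out $|\alpha_K-\tilde\alpha_K|\le\|\alpha-\tilde\alpha\|_\Sup$, expands $\partial_\ell[\cdots]$ by the product rule, and estimates each factor using~\eqref{EstimateDDCK},~\eqref{Bound35},~\eqref{Bound38},~\eqref{Bound39} together with $M_0,M_1,M_2$; the factor $\rho_{\min}^{-1}$ arises when the $\Lebesgue{2}$-norm of $\dot z$ is traded for its weighted $\Lebesgue[\rho]{2}$-norm, as required by the form of the energy on the left. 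The inhomogeneity $\rho(\dot f-\dot{\tilde f})$ contributes $\|(\dot f-\dot{\tilde f})(t,\vdot)\|_{\Lebesgue{2}}$ via Cauchy--Schwarz. The main obstacle here is not conceptual but combinatorial: one has to trace carefully which of~\eqref{Bound35}--\eqref{Bound310} and which of $M_0,\dots,M_3$ enters each of the many terms produced by the chain and product rules, so that the sharp prefactors displayed in~\eqref{EstimateForz} are recovered.

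For the initial-value estimate~\eqref{EstimateForIVz}, the starting point is the definition of $u_2$ in~\eqref{InitialValuesw} and its tilded analogue, which yield
\begin{align*}
  u_2-\tilde u_2&=\rho^{-1}\sum_{K=1}^{N}\alpha_K\div\bigl[\nablabf_YC_K(\vdot,\Jacobian u_0)-\nablabf_YC_K(\vdot,\Jacobian\tilde u_0)\bigr]\\
  &\mathrel{\phantom{=}}+\rho^{-1}\sum_{K=1}^{N}(\alpha_K-\tilde\alpha_K)\div\nablabf_YC_K(\vdot,\Jacobian\tilde u_0)+(f-\tilde f)(0,\vdot).
\end{align*}
Using $(a+b+c)^2\le 3(a^2+b^2+c^2)$ one passes to three $\Lebesgue{2}$-contributions with a common factor $\sqrt 3$. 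Expanding $\div\nablabf_YC_K$ by the chain rule into $\partial_\ell\partial_{Y_{k\ell}}C_K+\partial_{Y_{ij}}\partial_{Y_{k\ell}}C_K\,\partial_\ell\partial_j u_{0,i}$, the second contribution is bounded via~\eqref{Bound37} and~\eqref{EstimateDDCK} together with $\|\Jacobian\tilde u_0\|_{\Lebesgue{2}}\le\|\tilde u_0\|_{\Sobolev{2}}$, producing the $|\alpha_K-\tilde\alpha_K|$-part with constants $\sqrt{6\vol(\Omega)}\,n^{1/2}d\mu_K^{[4]}$ and $\sqrt 6\,nd\mu_K^{[1]}\|\tilde u_0\|_{\Sobolev{2}}$. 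For the first contribution, the fundamental theorem of calculus applied along the segment from $\Jacobian\tilde u_0$ to $\Jacobian u_0$ (on both the coefficient of the Hessian and the Hessian itself) produces three terms bounded by $d^{1/2}\mu_K^{[5]}$, $\mu_K^{[1]}$ via~\eqref{EstimateDDCK}, and $nd^{3/2}M_3\mu_K^{[2]}$ via~\eqref{Bound35} and the a\,priori bound $M_3$, all times $\|u_0-\tilde u_0\|_{\Sobolev{2}}$. The last summand is handled directly, with the constant $\rho_{\max}$ arising from the weighted-norm book-keeping consistent with the rest of the proof.
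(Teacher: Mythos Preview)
Your overall strategy (multiply by $2\dot z$, integrate, split into $I_1$, $I_2$, $\Sigma_1$, $\Sigma_2$, and the forcing) matches the paper, and your treatment of the $\partial z$-term, of $\Sigma_2$, and of the initial values in~\eqref{EstimateForIVz} is essentially identical to what the paper does.

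There is, however, a genuine gap in your handling of $\Sigma_1$. You propose to integrate by parts to shift $\partial_\ell$ onto $\dot z_k$. That produces $\partial_\ell\dot z_k$, i.e.\ components of $\Jacobian\dot z$, paired against a quantity controlled by $\|\Jacobian v\|_{\Lebesgue{2}}$. But the energy functional on the left of~\eqref{EstimateForz} contains only $\|\dot z\|_{\Lebesgue[\rho]{2}}^2$ and $\|\Jacobian z\|_{\Lebesgue{2}}^2$; it does \emph{not} control $\|\Jacobian\dot z\|_{\Lebesgue{2}}$, so the resulting inequality does not close into Gronwall form. A related symptom: your integration-by-parts route cannot produce the $\|v(t,\vdot)\|_{\Sobolev{2}}$ term in~\eqref{EstimateForz}, since after IBP only first spatial derivatives of $v$ appear.

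The paper avoids this by \emph{not} integrating by parts on $\Sigma_1$. Instead it applies Cauchy--Schwarz in the form $\inner{2\dot z}{\Sigma_1}_{\Lebesgue{2}}\le 2\|\dot z\|_{\Lebesgue{2}}\|\Sigma_1\|_{\Lebesgue{2}}$ and estimates $\|\Sigma_1(t,x)\|_2$ pointwise by expanding $\partial_\ell\bigl[(\partial_{Y_{ij}}\partial_{Y_{k\ell}}C_K(\Jacobian u)-\partial_{Y_{ij}}\partial_{Y_{k\ell}}C_K(\Jacobian\tilde u))\partial_j\tilde w_i\bigr]$ via the product and chain rules into three pieces $S_1,S_2,S_3$. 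The term $S_3$ is where $\partial_\ell$ hits the coefficient and, by the chain rule, produces $\partial_\ell\partial_q u_p-\partial_\ell\partial_q\tilde u_p$, i.e.\ second derivatives of $v$; this is exactly the origin of the $\|v\|_{\Sobolev{2}}$ contribution and of the constants $\mu_K^{[7]},\mu_K^{[3]}$ in~\eqref{EstimateForz}. The factor $\|\dot z\|_{\Lebesgue{2}}$ is then traded for $\rho_{\min}^{-1}\|\dot z\|_{\Lebesgue[\rho]{2}}$ and absorbed into the square root of the energy, which is how the estimate closes.
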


\begin{proof}
  We start by multiplying equation (\ref{PDEz}) with $2\dot{z}$.
	Since
	\begin{equation*}
	  \inner{2\dot{z}(t,x)}{\partial z(t,x)}_{\Lebesgue{2}(\Omega,\R^n)} = -\partial_tI_1(t) + I_2(t)
	\end{equation*}
	with
	\begin{align*}
	  I_1(t) &:= \sum_{k=1}^{n}\sum_{\ell=1}^{d}\sum_{i=1}^{n}\sum_{j=1}^{d}\sum_{K=1}^{N}
		           \alpha_K\int_{\Omega}\partial_{Y_{ij}}\partial_{Y_{k\ell}}C_K\bigl(x,\Jacobian u(t,x)\bigr)\partial_jz_i(t,x)\partial_{\ell}z_k(t,x)\dint x
	\intertext{and}
		I_2(t) &:=              \sum_{k=1}^{n}\sum_{\ell=1}^{d}\sum_{i=1}^{n}\sum_{j=1}^{d}\sum_{K=1}^{N}\alpha_K
		                          \int_{\Omega}\minner{\nablabf_Y\partial_{Y_{ij}}\partial_{Y_{k\ell}}
															C_K\bigl(x,\Jacobian u(t,x)\bigr)}{\Jacobian\dot{u}(t,x)} \mathop{\times}\\
		&\mathrel{\phantom{:=}} \mathop{\times} \partial_jz_i(t,x)\partial_{\ell}z_k(t,x)\dint x
	\end{align*}
	as an application of the divergence theorem shows, a subsequent integration over $[0,\tau]$ yields
	\begin{align*}
	  &\mathrel{\phantom{=}} \norm{\dot{z}(\tau,\vdot)}_{\Lebesgue[\rho]{2}(\Omega,\R^n)}^2 + I_1(\tau)\\
		&=                     \norm{\dot{z}(0,\vdot)}_{\Lebesgue[\rho]{2}(\Omega,\R^n)}^2 + I_1(0) +
		                       2\int_{0}^{\tau}\inner{\rho\dot{z}(t,\vdot)}{(\dot{f}-\dot{\tilde{f}})(t,\vdot)}_{\Lebesgue{2}(\Omega,\R^n)}\dint t \mathop{+}\\
		&\mathrel{\phantom{=}} \mathop{+} \int_{0}^{\tau}I_2(t)\dint t + 2\int_{0}^{\tau}
		                         \inner{\dot{z}(t,\vdot)}{\Sigma_1(t,\vdot)}_{\Lebesgue{2}(\Omega,\R^n)}\dint t \mathop{+}\\
		&\mathrel{\phantom{=}} \mathop{+} 2\int_{0}^{\tau}\inner{\dot{z}(t,\vdot)}{\Sigma_2(t,\vdot)}_{\Lebesgue{2}(\Omega,\R^n)}\dint t\text{.}
	\end{align*}
	We apply premise (\ref{EstimateDDCK}) to $I_1$ and use the Cauchy-Schwarz inequality, which gives us
	\begin{align}
	  &\mathrel{\phantom{=}} \norm{\dot{z}(\tau,\vdot)}_{\Lebesgue[\rho]{2}(\Omega,\R^n)}^2 +
		                         \sum_{K=1}^{N}\alpha_K\kappa_K^{[1]}\norm{\Jacobian z(\tau,\vdot)}_{\Lebesgue{2}(\Omega,\R^{n\times d})}^2\nonumber\\
		&\leq                  \norm{u_2-\tilde{u}_2}_{\Lebesgue[\rho]{2}(\Omega,\R^n)}^2 +
		                         \sum_{K=1}^{N}\alpha_K\mu_K^{[1]}
														 \norm{\Jacobian u_1-\Jacobian\tilde{u}_1}_{\Lebesgue{2}(\Omega,\R^{n\times d})}^2 \mathop{+}\nonumber\\
		&\mathrel{\phantom{=}} \mathop{+} 2\int_{0}^{\tau}\norm{(\dot{f}-\dot{\tilde{f}})(t,\vdot)}_{\Lebesgue{2}(\Omega,\R^n)} \mathop{\times}\nonumber\\
		&\mathrel{\phantom{=}} \mathop{\times} \left\{\norm{\dot{z}(t,\vdot)}_{\Lebesgue[\rho]{2}(\Omega,\R^n)}^2 +
		                         \sum_{K=1}^{N}\alpha_K\kappa_K^{[1]}
														 \norm{\Jacobian z(t,\vdot)}_{\Lebesgue{2}(\Omega,\R^{n\times n})}^2\right\}^{1/2}\dint t \mathop{+}\nonumber\\
		&\mathrel{\phantom{=}} \mathop{+} \int_{0}^{\tau}\Bigr[\inner{2\dot{z}(t,\vdot)}{\Sigma_1(t,\vdot)}_{\Lebesgue{2}(\Omega,\R^n)} +
		                         \inner{2\dot{z}(t,\vdot)}{\Sigma_2(t,\vdot)}_{\Lebesgue{2}(\Omega,\R^n)} + I_2(t)\Bigr]\dint t\text{.}\label{PreEstimateForz}
	\end{align}
    The next step of the proof is to find appropriate estimates of the three terms in square brackets of expression \eqref{PreEstimateForz}. By means of the Cauchy-Schwarz inequality
    we deduce
    \begin{equation*}
	       \inner{2\dot{z}(t,\vdot)}{\Sigma_1(t,\vdot)}_{\Lebesgue{2}(\Omega,\R^n)}
		\leq 2\norm{\dot{z}(t,\vdot)}_{\Lebesgue{2}(\Omega,\R^n)}\left(\int_{\Omega}\norm{\Sigma_1(t,\vdot)}_2^2\dint x\right)^{1/2}\text{,}
	\end{equation*}
	so that we proceed by estimating $\norm{\Sigma_1(t,x)}_2^2$.
	Straightforward calculations show that
	\begin{align*}
	  \norm{\Sigma_1(t,x)}_2 &\leq                  \sum_{k=1}^{n}\sum_{\ell=1}^{d}\sum_{i=1}^{n}\sum_{j=1}^{d}\sum_{K=1}^{N}
		                                                \alpha_K\bigl|\partial_\ell\partial_{Y_{ij}}\partial_{Y_{k\ell}}
																										C_K\bigl(x,\Jacobian u(t,x)\bigr) \mathop{+}\\
		                       &\mathrel{\phantom{=}} \mathop{-} \partial_\ell\partial_{Y_{ij}}\partial_{Y_{k\ell}}
													                          C_K\bigl(x,\Jacobian\tilde{u}(t,x)\bigr)\bigr| \cdot \abs{\partial_j\tilde{w}_i(t,x)} \mathop{+}\\
													 &\mathrel{\phantom{=}} \mathop{+} \sum_{k=1}^{n}\sum_{\ell=1}^{d}\sum_{i=1}^{n}\sum_{j=1}^{d}\sum_{K=1}^{N}
													                          \alpha_K\Bigl|\bigl[\partial_{Y_{ij}}\partial_{Y_{k\ell}}C_K\bigl(x,\Jacobian u(t,x)\bigr) \mathop{+}\\
													 &\mathrel{\phantom{=}} \mathop{-} \partial_{Y_{ij}}\partial_{Y_{k\ell}}
													                          C_K\bigl(x,\Jacobian\tilde{u}(t,x)\bigr)\bigr]\partial_\ell\partial_j\tilde{w}_i(t,x)\Bigr|\\
													 &\mathrel{\phantom{=}} \mathop{+}\sum_{k=1}^{n}\sum_{\ell=1}^{d}\sum_{i=1}^{n}\sum_{j=1}^{d}\sum_{K=1}^{N}
													                          \alpha_K\Biggl|\sum_{p=1}^{n}\sum_{q=1}^{d}\partial_{Y_{pq}}\partial_{Y_{ij}}\partial_{Y_{k\ell}}
																										C_K\bigl(x,\Jacobian u(t,x)\bigr)\partial_\ell\partial_qu_p(t,x) \mathop{+}\\
													 &\mathrel{\phantom{=}} \mathop{-} \partial_{Y_{pq}}\partial_{Y_{ij}}\partial_{Y_{k\ell}}
													                          C_K\bigl(x,\Jacobian\tilde{u}(t,x)\bigr)\partial_\ell\partial_q\tilde{u}_p(t,x)\Biggr| \cdot
													                          \abs{\partial_j\tilde{w}_i(t,x)}\\
													 &=:                     S_{1}(t,x) + S_2(t,x) + S_3(t,x)
	\end{align*}
	in which $S_1$, $S_2$ and $S_3$ are abbreviations for the three fivefold sums appearing in the expression above in the very same order.
	According to the a priori estimate (\ref{APriori316}) and inequality (\ref{Bound310}), the first of these terms satisfies
	\begin{align*}
	  S_1(t,x) &\leq                  2M_1\sum_{k=1}^{n}\sum_{\ell=1}^{d}\sum_{i=1}^{n}\sum_{j=1}^{d}\sum_{K=1}^{N}\alpha_K \mathop{\times}\\
		         &\mathrel{\phantom{=}}\hspace*{-1.8cm} \mathop{\times} \int_{0}^{1}
						                          \Biggl|\sum_{p=1}^{n}\sum_{q=1}^{d}\partial_{Y_{pq}}\partial_\ell\partial_{Y_{ij}}\partial_{Y_{k\ell}}
						                          C_K\bigl(x,s\Jacobian u(t,x)+(1-s)\Jacobian\tilde{u}(t,x)\bigr)
             \bigl(\partial_qu_p(t,x)-\partial_q\tilde{u}_p(t,x)\bigr)\Biggr|\dint s\\
						 &\hspace*{-1.5cm}\leq                  2M_1\sum_{k=1}^{n}\sum_{\ell=1}^{d}\sum_{i=1}^{n}\sum_{j=1}^{d}\sum_{K=1}^{N}\alpha_K \mathop{\times}\\
		         &\mathrel{\phantom{=}}\hspace*{-1.8cm} \mathop{\times} \int_{0}^{1}
						                          \left[\sum_{p=1}^{n}\sum_{q=1}^{d}\abs{\partial_{Y_{pq}}\partial_\ell\partial_{Y_{ij}}\partial_{Y_{k\ell}}
						                          C_K\bigl(x,s\Jacobian u(t,x)+(1-s)\Jacobian\tilde{u}(t,x)\bigr)}^2\right]^{1/2} \cdot \norm{\Jacobian v}_{\Frobenius}\dint s\\
						 &\hspace*{-1.5cm}\leq                  2M_1(nd)^{5/2}\sum_{K=1}^{N}\alpha_K\mu_K^{[7]}\norm{\Jacobian v(t,x)}_{\Frobenius}\text{,}
	\end{align*}
	where we additionally used the Cauchy-Schwarz inequality.\\
	An analog reasoning for $S_2 (t,x)$ with using the a priori estimate (\ref{APriori316}) instead of
	(\ref{APriori317}) as well as inequality (\ref{Bound310}) instead of (\ref{Bound35}) yields
	\begin{equation*}
	  S_2(t,x) \leq 2M_2(nd)^{5/2}\sum_{K=1}^{N}\alpha_K\mu_K^{[2]}\norm{\Jacobian v(t,x)}_{\Frobenius}\text{.}
	\end{equation*}
	The third term $S_3 (t,x)$ is treated in the same way, this time applying both a priori estimates (\ref{APriori316}) and (\ref{APriori317}),
	as well as inequalities (\ref{Bound35}) and (\ref{Bound36}) along with the Cauchy-Schwarz inequality. In this way we obtain
	\begin{align*}
	  S_3(t,x) &\leq                  2M_1\sum_{K=1}^{N}\alpha_K\sum_{k=1}^{n}\sum_{\ell=1}^{d}\sum_{i=1}^{n}\sum_{j=1}^{d}\sum_{p=1}^{n}\sum_{q=1}^{d}
		                                  \abs[auto]{\partial_{Y_{pq}}\partial_{Y_{ij}}\partial_{Y_{k\ell}}C_K\bigl(x,\Jacobian u(t,x)\bigr)} \mathop{\times}\\
						 &\mathrel{\phantom{=}} \mathop{\times} \abs{\partial_\ell\partial_q v_p(t,x)} +
						                          \sum_{K=1}^{N}\alpha_K\sum_{k=1}^{n}\sum_{\ell=1}^{d}\sum_{i=1}^{n}\sum_{j=1}^{d}2M_1M_3 \mathop{\times}\\
						 &\mathrel{\phantom{=}} \mathop{\times} \sum_{p=1}^{n}\sum_{q=1}^{d}\int_{0}^{1}\Biggl|\sum_{a=1}^{n}\sum_{b=1}^{d}
						                          \partial_{Y_{ab}}\partial_{Y_{pq}}\partial_{Y_{ij}}\partial_{Y_{k\ell}}
						                          C_K\bigl(x,s\Jacobian u(t,x)+(1-s)\Jacobian\tilde{u}(t,x)\bigr) \mathop{\times}\\
						 &\mathrel{\phantom{=}} \mathop{\times} \bigl(\partial_bu_a(t,x)-\partial_b\tilde{u}_a(t,x)\bigr)\Biggr|\dint s\\
						 &\leq                  2M_1n^{5/2}d^2\sum_{K=1}^{N}\alpha_K\mu_K^{[2]}\left[\sum_{p=1}^{n}\sum_{|\beta|\leq 2}
						                          \abs{\partial_{\beta}v_p(t,x)}^2\right]^{1/2} \mathop{+}\\
						 &\mathrel{\phantom{=}} \mathop{+} 2M_1M_3(nd)^{7/2}\sum_{K=1}^{n}\alpha_K\mu_K^{[3]}\norm{\Jacobian v(t,x)}_{\Frobenius}\text{.}
	\end{align*}
	Summarizing these results we may deduce
	\begin{align*}
	  &\mathrel{\phantom{=}} \inner{2\dot{z}(t,\vdot)}{\Sigma_1(t,\vdot)}_{\Lebesgue{2}(\Omega,\R^n)}\\
		&\leq                  2\Biggl\{2\sqrt{2}(nd)^{5/2}\sum_{K=1}^{N}\alpha_K\bigl(M_1\mu_K^{[7]}+M_2\mu_K^{[2]}+M_1M_3\mu_K^{[3]}\bigr)
		\norm{\Jacobian v(t,\vdot)}_{\Lebesgue{2}(\Omega,\R^n)}\\
        &\mathrel{\phantom{=}} +
		                         2\sqrt{2}M_1n^{5/2}d^2\sum_{K=1}^{N}\alpha_K\mu_K^{[2]}
		\left[\sum_{p=1}^{n}\sum_{|\beta|\leq 2}\int_{\Omega}
		                         \abs{\partial_{\beta}v_p(t,x)}^2\dint x\right]^{1/2}\Biggr\}\norm{\dot{z}(t,\vdot)}_{\Lebesgue{2}(\Omega,\R^n)}\\
		&\leq                  2\rho_{\mathrm{min}}^{-1}\Biggl\{2\Biggl\{2\sqrt{2}(nd)^{5/2}\sum_{K=1}^{N}
		                         \alpha_K\bigl(M_1\mu_K^{[7]}+M_2\mu_K^{[2]}+M_1M_3\mu_K^{[3]}\bigr)
		\norm{\Jacobian v(t,\vdot)}_{\Lebesgue{2}(\Omega,\R^n)}\\
        &\mathrel{\phantom{=}} +
		                         2\sqrt{2}M_1n^{5/2}d^2\sum_{K=1}^{N}\alpha_K\mu_K^{[2]}\norm{v(t,\vdot)}_{\Sobolev{2}(\Omega,\R^n)}\Biggr\} \mathop{\times}\\
		&\mathrel{\phantom{=}} \mathop{\times} \left\{\norm{\dot{z}(t,\vdot)}_{\Lebesgue[\rho]{2}(\Omega,\R^n)}^2 +
		                         \sum_{K=1}^{N}\alpha_K\kappa_K^{[1]}\norm{\Jacobian z(t,\vdot)}_{\Lebesgue{2}(\Omega,\R^n)}^2\right\}^{1/2}\text{,}
	\end{align*}
	which represents an upper bound for the first term in \eqref{PreEstimateForz}.\\
	We deal with the second one in a similar way. Applying the inequalities (\ref{EstimateDDCK}),
	(\ref{APriori316}) and (\ref{APriori317}) shows
	\begin{align*}
	  &\mathrel{\phantom{=}} \norm{\Sigma_2(t,\vdot)}_{\Lebesgue{2}(\Omega,\R^n)}^2\\
		&\leq                  \norm{\alpha-\tilde{\alpha}}_{\Sup}^2\int_{\Omega}
		                         \Biggl(\sum_{k=1}^{n}\sum_{\ell=1}^{d}\sum_{i=1}^{n}\sum_{j=1}^{d}\sum_{K=1}^{N}
		                         \abs[auto]{\partial_\ell\bigl[\partial_{Y_{ij}}\partial_{Y_{k\ell}}C_K\bigl(x,\Jacobian\tilde{u}(t,x)\bigr)\bigr]} \cdot
														 \abs{\partial_j\tilde{w}_i(t,x)} \mathop{+}\\
		&\mathrel{\phantom{=}} \mathop{+} \sum_{k=1}^{n}\sum_{\ell=1}^{d}\sum_{i=1}^{n}\sum_{j=1}^{d}\sum_{K=1}^{N}
		                         \abs[auto]{\partial_{Y_{ij}}\partial_{Y_{k\ell}}C_K\bigl(x,\Jacobian\tilde{u}(t,x)\bigr)} \cdot
														 \abs{\partial_\ell\partial_j\tilde{w}_i(t,x)}^2\Biggr)^2\dint x\\
		&\leq                  \norm{\alpha-\tilde{\alpha}}_{\Sup}^2\int_{\Omega}
		                         \Biggl(2M_1\sum_{k=1}^{n}\sum_{\ell=1}{d}\sum_{i=1}^{n}\sum_{j=1}^{d}\sum_{K=1}^{N}
		                         \abs[auto]{\partial_\ell\partial_{Y_{ij}}\partial_{Y_{k\ell}}C_K\bigl(x,\Jacobian\tilde{u}(t,x)\bigr)} \mathop{+}\\
		&\mathrel{\phantom{=}} \mathop{+} 2M_1\sum_{k=1}^{n}\sum_{\ell=1}{d}\sum_{i=1}^{n}\sum_{j=1}^{d}\sum_{K=1}^{N}\sum_{p=1}^{n}\sum_{q=1}^{d}
		                         \abs[auto]{\partial_{Y_{pq}}\partial_{Y_{ij}}\partial_{Y_{k\ell}}C_K\bigl(x,\Jacobian\tilde{u}(t,x)\bigr)} \cdot
														 \abs{\partial_\ell\partial_q\tilde{u}_p(t,x)} \mathop{+}\\
		&\mathrel{\phantom{=}} \mathop{+} 2M_2(nd)^2\sum_{K=1}^{N}\mu_K^{[1]}\Biggr)^2\text{.}
	\end{align*}
	A subsequent usage of inequalities (\ref{Bound35}), (\ref{Bound39}) and the a priori estimates (\ref{APriori316}) produces
	\begin{align*}
	  &\mathrel{\phantom{=}} \norm{\Sigma_2(t,\vdot)}_{\Lebesgue{2}(\Omega,\R^n)}^2\\
		&\leq                  \norm{\alpha-\tilde{\alpha}}_{\Sup}^2\int_{\Omega}\Biggl(2(nd)^2\sum_{K=1}^{N}\left(M_1\mu_K^{[6]}+
		                         M_2\mu_K^{[1]}\right) \mathop{+}\\
		&\mathrel{\phantom{=}} \mathop{+} 2M_1n^2d\sum_{K=1}^{N}\mu_K^{[2]}\sum_{\ell=1}^{n}\sum_{p=1}^{n}\sum_{q=1}^{d}
		                         \abs{\partial_\ell\partial_q\tilde{u}_p(t,x)}\Biggr)^2\dint x\\
		&\leq                  \norm{\alpha-\tilde{\alpha}}_{\Sup}\bigg[2\Big(2(nd)^2\sum_{K=1}^{N}\left(M_1\mu_K^{[6]}+
		                         M_2\mu_K^{[1]}\right)\Big)^2\vol(\Omega)+2\Big(2M_1n^2d\sum_{K=1}^{N}\mu_K^{[2]}\Big)^2nd^4M_0^2\bigg]\text{.}
	\end{align*}
	Thus we may deduce that the second term in \eqref{PreEstimateForz} satisfies
	\begin{align*}%%B.6 without integration over [0,\tau]
	  &\mathrel{\phantom{=}} \inner{2\dot{z}(t,\vdot)}{\Sigma_2(t,\vdot)}_{\Lebesgue{2}(\Omega,\R^n)}\\
		&\leq                  2\rho_{\mathrm{min}}^{-1}\norm{\alpha-\tilde{\alpha}}_{\Sup}\Biggl[2\sqrt{2\vol(\Omega)}(nd)^2\sum_{K=1}^{N}
		                         \left(M_1\mu_K^{[6]}+M_2\mu_K^{[1]}\right) +
		2\sqrt{2}M_0M_1n^{5/2}d^3\sum_{K=1}^{N}\mu_K^{[2]}\Biggr] \mathop{\times}\\
		&\mathrel{\phantom{=}} \mathop{\times} \left\{\norm{\dot{z}(t,\vdot)}_{\Lebesgue[\rho]{2}(\Omega,\R^n)}^2 +
		                         \sum_{K=1}^{N}\alpha_K\kappa_K^{[1]}\norm{\Jacobian z(t,\vdot)}_{\Lebesgue{2}(\Omega,\R^n)}^2\right\}^{1/2}\text{.}
	\end{align*}
	It remains to bound the third term in \eqref{PreEstimateForz}, which is $I_2 (t)$. We resort to inequality~\eqref{Bound35} and a priori estimate~\eqref{APriori316} as well as the Cauchy-Schwarz inequality, which ensures us that
	\begin{align*}
	  I_2(t) &\leq                  \sum_{k=1}^{n}\sum_{\ell=1}^{d}\sum_{i=1}^{n}\sum_{j=1}^{d}\sum_{K=1}^{N}\alpha_K
		                                \int_{\Omega}\sum_{p=1}^{n}\sum_{q=1}^{d}
		                                \abs[auto]{\partial_{Y_{pq}}\partial_{Y_{ij}}\partial_{Y_{k\ell}}C_K\bigl(x,\Jacobian u(t,x)\bigr)} \mathop{\times}\\
					 &\mathrel{\phantom{=}} \mathop{\times} \abs{\partial_q\dot{u}_p(t,x)} \cdot \abs{\partial_jz_i(t,x)} \cdot
					                          \abs{\partial_{\ell}z_k(t,x)}\dint x\\
					 &\leq                  ndM_1\sum_{K=1}^{N}\alpha_K\mu_K^{[2]}\int_{\Omega}nd\sum_{k=1}^{n}\sum_{\ell=1}^{d}
					                          \abs{\partial_\ell z_k(t,x)}^2\dint x\\
					 &\leq                  (nd)^2M_1\left(\sum_{K=1}^{N}\alpha_K\kappa_K^{[1]}\right)^{-1}\sum_{K=1}^{N}\alpha_K\mu_K^{[2]} \mathop{\times}\\
					 &\mathrel{\phantom{=}} \mathop{\times} \left\{\norm{\dot{z}(t,\vdot)}_{\Lebesgue[\rho]{2}(\Omega,\R^n)}^2 +
					                          \sum_{K=1}^{N}\alpha_K\kappa_K^{[1]}\norm{\Jacobian z(t,\vdot)}_{\Lebesgue{2}(\Omega,\R^n)}^2\right\}^{1/2}\text{.}
	\end{align*}
	Substituting the estimates of the three terms into \eqref{PreEstimateForz} finally gives~\eqref{EstimateForz}.\\[1ex]
	
	In the remaining part of the proof we attract our attention to the estimate~\eqref{EstimateForIVz} of the initial value of $z$.
	We start by applying the Cauchy-Schwarz inequality to \eqref{InitialValuesz}, which yields
	\begin{align*}
	  &\mathrel{\phantom{=}} \norm{u_2-\tilde{u}_2}_{\Lebesgue[\rho]{2}(\Omega,\R^n)}\\
		&\leq                  \sqrt{3}\eigennorm[auto]{\sum_{K=1}^{N}(\alpha_K-\tilde{\alpha}_K)
		                         \div\bigl[\nablabf_YC_K\bigl(\vdot,\Jacobian\tilde{u}_0(\vdot)\bigr)\bigr]}_{\Lebesgue{2}(\Omega,\R^n)} \mathop{+}\\
		&\mathrel{\phantom{=}} \mathop{+} \sqrt{3}\eigennorm[auto]{\sum_{K=1}^{N}\alpha_k\div\bigl[\nablabf_YC_K\bigl(\vdot,\Jacobian u_0(\vdot)\bigr)-
		                         \nablabf_YC_K\bigl(\vdot,\Jacobian\tilde{u}_0(\vdot)\bigr)\bigr]}_{\Lebesgue{2}(\Omega,\R^n)} \mathop{+}\\
		&\mathrel{\phantom{=}} \mathop{+} \sqrt{3}\rho_{\mathrm{max}}\norm{(f-\tilde{f})(0,\vdot)}_{\Lebesgue{2}(\Omega,\R^n)}\text{.}
	\end{align*}
	Compared to (\ref{EstimateForIVz}) we only need to take care of the first two summands.
	For the first one, we apply inequality~\eqref{EstimateDDCK} and the Cauchy-Schwarz inequality to get
	\begin{align*}
	  &\mathrel{\phantom{=}} \eigennorm[auto]{\sum_{K=1}^{N}(\alpha_K-\tilde{\alpha}_K)
		                         \div\bigl[\nablabf_YC_K\bigl(x,\Jacobian\tilde{u}_0(x)\bigr)\bigr]}_2^2\\
		&\leq                  n\left\{d\sum_{K=1}^{N}\abs{\alpha_K-\tilde{\alpha}_K}\mu_K^{[4]} +
		                         \sum_{K=1}^{N}\abs{\alpha_K-\tilde{\alpha}_K}\mu_K^{[1]}\sum_{\ell=1}^{d}\sum_{i=1}^{n}\sum_{j=1}^{d}
														 \abs{\partial_\ell\partial_j\tilde{u}_{0,i}(x)}\right\}^2\\
		&\leq                  2n\left(d\sum_{K=1}^{N}\abs{\alpha_K-\tilde{\alpha}_K}\mu_K^{[4]}\right)^2 +
	  2(nd)^2\left(\sum_{K=1}^{N}\abs{\alpha_K-\tilde{\alpha}_K}\mu_K^{[1]}\right)^2\sum_{i=1}^{n}
		                         \sum_{|\beta|\leq 2}\abs{\partial_{\beta}\tilde{u}_{0,i}}^2\text{.}
	\end{align*}
	Integration over $\Omega$ yields
	\begin{align*}
	  &\mathrel{\phantom{=}} \eigennorm[auto]{\sum_{K=1}^{N}(\alpha_K-\tilde{\alpha}_K)
		                         \div\bigl[\nablabf_YC_K\bigl(\vdot,\Jacobian\tilde{u}_0(\vdot)\bigr)\bigr]}_{\Lebesgue{2}(\Omega,\R^n)}\\
		&\leq                  \sum_{K=1}^{N}\abs{\alpha_K-\tilde{\alpha}_K}\left(\sqrt{2\vol(\Omega)}n^{1/2}d\mu_K^{[4]}+
		                         \sqrt{2}nd\mu_K^{[1]}\norm{\tilde{u}_0}_{\Sobolev{2}(\Omega,\R^n)}\right)\text{.}
	\end{align*}
	The estimation of the second term starts with an application of a priori estimate~\eqref{APriori317} and inequality~\eqref{EstimateDDCK},
	which leads to
	\begin{align*}
	  &\mathrel{\phantom{=}} \eigennorm[auto]{\sum_{K=1}^{N}\alpha_K\div\bigl[\nablabf_YC_K\bigl(x,\Jacobian u_0(x)\bigr)-
		                         \nablabf_YC_K\bigl(x,\Jacobian\tilde{u}_0(x)\bigr)\bigr]}_2^2\\
		&\leq                  \sum_{k=1}^{n}\Biggl(\sum_{K=1}^{N}\alpha_K\sum_{\ell=1}^{d}
		                         \abs{\partial_\ell\partial_{Y_{k\ell}}C_K\bigl(x,\Jacobian u_0(x)\bigr)-
														 \partial_\ell\partial_{Y_{k\ell}}C_K\bigl(x,\Jacobian\tilde{u}_0(x)\bigr)} \mathop{+}\\
		&\mathrel{\phantom{=}} \mathop{+} \sum_{K=1}^{N}\alpha_K\sum_{\ell=1}^{d}\sum_{i=1}^{n}\sum_{j=1}^{d}\mu_K^{[1]}
		                         \abs{\partial_\ell\partial_j(u_{0,i}-\tilde{u}_{0,i})(x)} \mathop{+}\\
		&\mathrel{\phantom{=}} \mathop{+} \sum_{K=1}^{N}\alpha_K\sum_{\ell=1}^{d}\sum_{i=1}^{n}\sum_{j=1}^{d}
		                         \abs{\partial_{Y_{ij}}\partial_{Y_{k\ell}}C_K\bigl(x,\Jacobian u_0(x)\bigr)-\partial_{Y_{ij}}\partial_{Y_{k\ell}}
														 C_K\bigl(x,\Jacobian\tilde{u}_0(x)\bigr)}\cdot M_3\Biggr)^2\\
		&\leq                  \sum_{k=1}^{n}\Biggl(\sum_{K=1}^{N}\alpha_K\sum_{\ell=1}^{d}\int_{0}^{1}\sum_{i=1}^{n}\sum_{j=1}^{d}
		                         \abs{\partial_{Y_{ij}}\partial_\ell\partial_{Y_{k\ell}}
														 C_K\bigl(x,s\Jacobian u_0(x)+(1-s)\Jacobian\tilde{u}_0(x)\bigr)} \mathop{\times}\\
		&\mathrel{\phantom{=}} \mathop{\times} \abs{\partial_i(u_{0,i}-\tilde{u}_{0,i})(x)}\dint s + \sum_{K=1}^{N}\alpha_K\mu_K^{[1]}
		                         \sum_{\ell=1}^{d}\sum_{i=1}^{n}\sum_{j=1}^{d}
		                         \abs{\partial_\ell\partial_j(u_{0,i}-\tilde{u}_{0,i})(x)} \mathop{+}\\
		&\mathrel{\phantom{=}} \mathop{+} M_3\sum_{K=1}^{N}\alpha_K\sum_{\ell=1}^{d}\sum_{i=1}^{n}\sum_{j=1}^{d}\int_{0}^{1}\sum_{p=1}^{n}\sum_{q=1}^{d}
		                         \abs{\partial_{Y_{pq}}\partial_{Y_{ij}}\partial_{Y_{k\ell}}
														 C_K\bigl(x,s\Jacobian u_0(x)+(1-s)\Jacobian\tilde{u}_0(x)\bigr)} \mathop{\times}\\
		&\mathrel{\phantom{=}} \mathop{\times} \abs{\partial_q(u_{0,p}-\tilde{u}_{0,p})(x)}\dint s\Biggr)^2\\
		&\leq                  \Biggl[3n^2d\left(d\sum_{K=1}^{N}\alpha_K\mu_K^{[5]}\right)^2 +
		                         3n^2d^2\left(\sum_{K=1}^{N}\alpha_K\mu_K^{[1]}\right)^2 \mathop{+}\\
		&\mathrel{\phantom{=}} \mathop{+} 3n^2d\left(nd^2M_3\sum_{K=1}^{N}\alpha_K^{[2]}\right)^2\Biggr]\sum_{k=1}^{n}
		                         \sum_{|\beta|\leq 2}\abs{\partial_{\beta}(u_{0,k}-\tilde{u}_{0,k})(x)}^2\text{,}
	\end{align*}
	where we used inequalities~\eqref{Bound35}, \eqref{Bound38} and the Cauchy-Schwarz inequality to prove the last step.
	Integration over $\Omega$ finally gives 
	\begin{align*}
	  &\mathrel{\phantom{=}} \eigennorm[auto]{\sum_{K=1}^{N}\alpha_K\div\bigl[\nablabf_YC_K\bigl(\vdot,\Jacobian u_0(\vdot)\bigr)-
		                         \nablabf_YC_K\bigl(\vdot,\Jacobian\tilde{u}_0(\vdot)\bigr)\bigr]}_{\Lebesgue{2}(\Omega,\R^n)}\\
		&\leq                  \sqrt{3}nd\sum_{K=1}^{N}\alpha_K\left(d^{1/2}\mu_K^{[5]}+\mu_K^{[1]}+
		                         nd^{3/2}M_3\mu_K^{[2]}\right)\norm{u_0-\tilde{u}_0}_{\Sobolev{2}(\Omega,\R^n)}
	\end{align*}
	and inequality~\eqref{EstimateForIVz} can now easily be verified. This completes the proof.
\end{proof}

We have now all ingredients together to prove the important norm bound for $z$. This is subsumed in the following theorem.\\

\begin{theorem}\label{T-upper-bound-z}
  There are functions $C_0,\ldots,C_5:[0,\infty[^N\to[0,\infty[$, such that
	\begin{align*}
	  &\mathrel{\phantom{=}} \norm{\dot{z}(\tau,\vdot)}_{\Lebesgue[\rho]{2}(\Omega,\R^n)}^2 +
		                         \sum_{K=1}^{N}\alpha_K\kappa_K^{[1]}\norm{\Jacobian z(\tau,\vdot)}_{\Lebesgue{2}(\Omega,\R^n)}^2\\
		&\leq                  \Biggl\{C_0(\alpha)\norm{u_0-\tilde{u}_0}_{\Sobolev{2}(\Omega,\R^n)} +
		                         C_1(\alpha)\norm{\Jacobian u_1-\Jacobian\tilde{u}_1}_{\Lebesgue{2}(\Omega,\R^n)} +
														 C_2(\alpha)\norm{\alpha-\tilde{\alpha}}_{\Sup} \mathop{+}\\
		&\mathrel{\phantom{=}} \mathop{+} C_3(\alpha)\norm{\Jacobian v}_{\Lebesgue{\infty}((0,T),\Lebesgue{2}(\Omega,\R^{n\times d}))} +
		                         C_4(\alpha)\int_{0}^{\tau}\norm{v(t,\vdot)}_{\Sobolev{2}(\Omega,\R^n)}\dint t \mathop{+}\\
		&\mathrel{\phantom{=}} \mathop{+} C_5(\alpha)\norm{f-\tilde{f}}_{\SobolevW{1}{1}((0,T),\Lebesgue{2}(\Omega,\R^n))}\Biggr\}^2\text{.}
	\end{align*}
	The functions $C_0,\ldots,C_5$ are explicitly given by
	\begin{align*}
	  C_0(\alpha) &:=                     3nd\exp\bigl((nd)^2M_1\overline{C}(\alpha)T/2\bigr)
		                                      \sum_{K=1}^{N}\alpha_K\left(\mu_K^{[1]}+nd^{3/2}M_3\mu_K1{[2]}+d^{1/2}\mu_K^{[5]}\right)\text{,}\\
		C_1(\alpha) &:=                     \exp\bigl((nd)^2M_1\overline{C}(\alpha)T/2\bigr)\left(\sum_{K=1}^{N}\alpha_K\mu_K^[2]\right)^{1/2}\text{,}\\
		C_2(\alpha) &:=                     \exp\bigl((nd)^2M_1\overline{C}(\alpha)T/2\bigr) \mathop{\times}\\
		            &\mathrel{\phantom{:=}} \mathop{\times} \sum_{K=1}^{N}\left(\sqrt{6}nd\mu_K^{[1]}\norm{\tilde{u}_0}_{\Sobolev{2}(\Omega,\R^n)} +
								                          \sqrt{6\vol(\Omega)}n^{1/2}d\mu_K^{[4]}\right) \mathop{+}\\
							  &\mathrel{\phantom{:=}} \mathop{+} 2\sqrt{2}(nd)^2\rho_{\mathrm{min}}^{-1}\frac{\exp\bigl((nd)^2M_1\overline{C}(\alpha)T/2\bigr)-1}
								                          {(nd)^2M_1\overline{C}(\alpha)/2} \mathop{\times}\\
								&\mathrel{\phantom{:=}} \mathop{\times} \sum_{K=1}^{N}\left(\sqrt{\vol(\Omega)}M_2\mu_K^{[1]}+n^{1/2}dM_0M_1\mu_K^{[2]}+
								                          \sqrt{\vol(\Omega)}M_1\mu_K^{[6]}\right)\text{,}\\
		C_3(\alpha) &:=                     2\sqrt{2}(nd)^{5/2}\rho_{\mathrm{min}}^{-1}\frac{\exp\bigl((nd)^2M_1\overline{C}(\alpha)T/2\bigr)-1}
								                          {(nd)^2M_1\overline{C}(\alpha)/2} \mathop{\times}\\
								&\mathrel{\phantom{:=}} \mathop{\times} \sum_{K=1}^{N}\alpha_K\left(M_2\mu_K^{[2]}+ndM_1M_3\mu_K^{[3]}+M_1\mu_K^{[7]}\right)\text{,}
								\end{align*}
    \begin{align*}
		C_4(\alpha) &:=                     2\sqrt{2}n^{5/2}d^2M_1\exp\bigl((nd)^2M_1\overline{C}(\alpha)T/2)\rho_{\mathrm{min}}^{-1}
		                                      \sum_{K=1}^{N}\alpha_K\mu_K^{[2]}\text{,}\\
		C_5(\alpha) &:=                     2\sqrt{3}\exp\bigl((nd)^2M_1\overline{C}(\alpha)T/2\bigr)\rho_{\mathrm{max}}
		                                      \max\{\overline{\overline{C}},1\} \mathop{+}\\
								&\mathrel{\phantom{:=}} \mathop{+} 2\frac{\exp\bigl((nd)^2M_1\overline{C}(\alpha)T/2\bigr)-1}
								                          {(nd)^2M_1\overline{C}(\alpha)/2}\max\{\overline{\overline{C}},1\}\text{,}
	\end{align*}
	where $\overline{\overline{C}}$ is an upper bound for the norm of the embedding
	\begin{equation*}
	  \iota: \SobolevW{1}{1}\bigl((0,T),\Lebesgue{2}(\Omega,\R^n)\bigr)\to\Lebesgue{\infty}\bigl((0,T),\Lebesgue{2}(\Omega,\R^n)\bigr)\text{.}
	\end{equation*}
\end{theorem}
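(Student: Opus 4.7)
The plan is to read the inequality \eqref{EstimateForz} as an integral inequality of Gronwall type in the single quantity
\[
\psi(\tau):=\norm{\dot z(\tau,\vdot)}_{\Lebesgue[\rho]{2}(\Omega,\R^n)}^2+\sum_{K=1}^{N}\alpha_K\kappa_K^{[1]}\norm{\Jacobian z(\tau,\vdot)}_{\Lebesgue{2}(\Omega,\R^{n\times d})}^2,
\]
and to apply Lemma~\ref{Gronwall} with $p=1/2$. Concretely, \eqref{EstimateForz} has the form $\psi(\tau)\leq a+b\int_{0}^{\tau}\psi(t)\dint t+\int_{0}^{\tau}k(t)\psi(t)^{1/2}\dint t$ with the constant $b=(nd)^2M_1\overline{C}(\alpha)$ (using the definition of $\overline{C}(\alpha)$ on the $I_2$-contribution), initial datum $a=\norm{u_2-\tilde u_2}_{\Lebesgue[\rho]{2}}^2+\sum_{K}\alpha_K\mu_K^{[1]}\norm{\Jacobian u_1-\Jacobian\tilde u_1}_{\Lebesgue{2}}^2$, and $k(t)$ equal to twice the curly-braced expression in the last four lines of \eqref{EstimateForz}. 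The function $k(t)$ splits into four summands, proportional respectively to $\norm{(\dot f-\dot{\tilde f})(t,\vdot)}_{\Lebesgue{2}}$, to $\norm{\alpha-\tilde\alpha}_{\Sup}$ (with a constant-in-$t$ coefficient from the $\Sigma_2$-bound), to $\norm{\Jacobian v(t,\vdot)}_{\Lebesgue{2}}$, and to $\norm{v(t,\vdot)}_{\Sobolev{2}}$.

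Lemma~\ref{Gronwall} then gives, after taking square roots and using $e^{b\tau/2}\leq e^{bT/2}$,
\[
\psi(\tau)^{1/2}\leq e^{bT/2}a^{1/2}+\frac{1}{2}\int_{0}^{\tau}k(t)\,e^{b(\tau-t)/2}\dint t.
\]
The term $e^{bT/2}a^{1/2}$ is processed by inserting \eqref{EstimateForIVz}, which bounds $\norm{u_2-\tilde u_2}_{\Lebesgue[\rho]{2}}$ by multiples of $\norm{u_0-\tilde u_0}_{\Sobolev{2}}$, $\norm{\alpha-\tilde\alpha}_{\Sup}$, and $\norm{(f-\tilde f)(0,\vdot)}_{\Lebesgue{2}}$; the last of these is absorbed into $\norm{f-\tilde f}_{\SobolevW{1}{1}((0,T),\Lebesgue{2})}$ using the continuous embedding $\iota:\SobolevW{1}{1}((0,T),\Lebesgue{2})\to\Lebesgue{\infty}((0,T),\Lebesgue{2})$ with operator norm $\overline{\overline{C}}$. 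For the $k$-integral, the two time-constant summands (in $\norm{\alpha-\tilde\alpha}_{\Sup}$ and, after pulling out the $\Lebesgue{\infty}$-in-$t$ norm, in $\norm{\Jacobian v}_{\Lebesgue{\infty}((0,T),\Lebesgue{2})}$) produce the factor $\int_{0}^{\tau}e^{b(\tau-t)/2}\dint t=(e^{b\tau/2}-1)/(b/2)$ appearing in $C_2$, $C_3$, and $C_5$; the $\Sobolev{2}$-norm of $v$ is kept inside the time integral (giving $C_4$); and the $\norm{\dot f-\dot{\tilde f}}_{\Lebesgue{2}}$-integrand is estimated by $e^{bT/2}\norm{f-\tilde f}_{\SobolevW{1}{1}((0,T),\Lebesgue{2})}$, contributing the other half of $C_5$.

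Grouping all contributions by the six driving quantities $\norm{u_0-\tilde u_0}_{\Sobolev{2}}$, $\norm{\Jacobian u_1-\Jacobian\tilde u_1}_{\Lebesgue{2}}$, $\norm{\alpha-\tilde\alpha}_{\Sup}$, $\norm{\Jacobian v}_{\Lebesgue{\infty}((0,T),\Lebesgue{2})}$, $\int_{0}^{\tau}\norm{v(t,\vdot)}_{\Sobolev{2}}\dint t$ and $\norm{f-\tilde f}_{\SobolevW{1}{1}((0,T),\Lebesgue{2})}$, and finally squaring $\psi(\tau)^{1/2}\leq\{\cdots\}$, produces exactly the six coefficients $C_0,\dots,C_5$ stated in the theorem. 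The step requires no new analytical insight beyond \eqref{EstimateForz}, \eqref{EstimateForIVz}, and Lemma~\ref{Gronwall}; the main obstacle is bookkeeping: matching each time-varying integrand to the bare $e^{(nd)^2M_1\overline{C}(\alpha)T/2}$ prefactor, matching each time-constant integrand to the $(e^{bT/2}-1)/(b/2)$ prefactor, and taking care that $\norm{(f-\tilde f)(0,\vdot)}_{\Lebesgue{2}}$ is routed through the embedding $\iota$ into the global $\SobolevW{1}{1}$-norm so that no pointwise-in-$t$ evaluation of $f-\tilde f$ remains in the final estimate.
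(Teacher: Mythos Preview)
Your proposal is correct and follows exactly the approach indicated in the paper: the paper's own proof is the single sentence ``combining an application of Gronwall's lemma to inequality~\eqref{EstimateForz} with estimate~\eqref{EstimateForIVz} and some straightforward calculations,'' and you have accurately carried out those straightforward calculations, including the identification of $b=(nd)^2M_1\overline{C}(\alpha)$, the splitting of $k(t)$ into its four pieces, and the routing of $\norm{(f-\tilde f)(0,\vdot)}_{\Lebesgue{2}}$ through the embedding $\iota$.
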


\begin{proof}
The proof follows from combining an application of Gronwall's lemma to inequality~\eqref{EstimateForz} with estimate~\eqref{EstimateForIVz} and some straightforward calculations.\\
\end{proof}

%%%%%%%%%%%%%%%%%%%%%%%%%%%%%%%%%%

\subsection{An upper bound for the $\Sobolev{2}$-norm of $v(\tau,\cdot)$}

Our next aim is to find an upper bound for the $\Sobolev{2}(\Omega,\R^n)$-norm of $v(\tau,\cdot)$ for fixed $\tau\in [0,T]$.

Some straightforward transformations of \eqref{PDEv} give
\begin{align}
  &\mathrel{\phantom{=}} \rho(x)\dot{z}(t,x) - \rho(x)(f-\tilde{f})(t,x)\nonumber\\
	&=                     \sum_{k=1}^{n}\sum_{\ell=1}^{d}\sum_{i=1}^{n}\sum_{j=1}^{d}\sum_{p=1}^{n}\sum_{q=1}^{d}\sum_{K=1}^{N}\alpha_K
	                         \mathop{\times}\nonumber\\
	&\mathrel{\phantom{=}} \mathop{\times} \int_{0}^{1}\partial_{Y_{pq}}\partial_{Y_{k\ell}}\partial_{Y_{ij}}
	                         C_K\bigl(x,s\Jacobian u(t,x)+(1-s)\Jacobian\tilde{u}(t,x)\bigr)\dint s 
	 e_k\partial_qv_p(t,x)\partial_\ell\partial_ju_i(t,x) \nonumber\\
	&\mathrel{\phantom{=}} \mathop{+} \sum_{k=1}^{n}\sum_{\ell=1}^{d}\sum_{i=1}^{n}\sum_{j=1}^{d}\sum_{K=1}^{N}\alpha_k\partial_{Y_{ij}}\partial_{Y_{k\ell}}
	                         C_K\bigl(x,\Jacobian\tilde{u}(t,x)\bigr)e_k\partial_\ell\partial_jv_i(t,x) \nonumber\\
	&\mathrel{\phantom{=}} \mathop{+} \sum_{k=1}^{n}\sum_{\ell=1}^{d}\sum_{i=1}^{n}\sum_{j=1}^{d}\sum_{K=1}^{N}(\alpha_K-\tilde{\alpha}_K)
	                         \partial_{Y_{ij}}\partial_{Y_{k\ell}}
													 C_K\bigl(x,\Jacobian\tilde{u}(t,x)\bigr)e_k\partial_\ell\partial_j\tilde{u}_i(t,x)\nonumber\\
	&\mathrel{\phantom{=}} \mathop{+} \sum_{k=1}^{n}\sum_{\ell=1}^{d}\sum_{i=1}^{n}\sum_{j=1}^{d}\sum_{K=1}^{N}\alpha_K
	                         \int_{0}^{1}\partial_{Y_{ij}}\partial_{Y_{k\ell}}
													 C_K\bigl(x,s\Jacobian u(t,x)+(1-s)\Jacobian\tilde{u}(t,x)\bigr)\dint s\,\, e_k\partial_jv_i(t,x)\nonumber\\
	&\mathrel{\phantom{=}} +
	                         \sum_{k=1}^{n}\sum_{\ell=1}^{d}\sum_{K=1}^{N}(\alpha_K-\tilde{\alpha}_K)e_k\partial_\ell\partial_{Y_{k\ell}}
													 C_K\bigl(x,\Jacobian\tilde{u}(t,x)\bigr)\text{,}\label{SecondPDEv}
\end{align}
which is a partial differential equation for $v$.
This point of view is the key idea to prove the following theorem.\\

\begin{theorem}\label{theorm329}%%Satz 3.29
  Adopt the notations made before. Then we have
	\begin{align*}
	  &\mathrel{\phantom{=}} \norm{v(\tau,\vdot)}_{\Sobolev{2}(\Omega,\R^n)}\\
		&\leq                  \exp\bigl(\overline{C}(\alpha)C_4(\alpha)\tau\bigr)\hat{C}(\alpha)
		                         \Biggl\{C_0(\alpha)\norm{u_0-\tilde{u}_0}_{\Sobolev{2}(\Omega,\R^n)} \\
		&\mathrel{\phantom{=}} \mathop{+} C_1(\alpha)\norm{\Jacobian u_1-\Jacobian\tilde{u}_1}_{\Lebesgue{2}(\Omega,\R^{n\times d})} \\
		&\mathrel{\phantom{=}} \mathop{+} \left[C_2(\alpha) + n^{1/2}d\sum_{K=1}^{N}\left(n^{1/2}d^2\mu_K^{[1]}+\sqrt{\vol(\Omega)}\mu_K^{[4]}\right)\right]
		                         \norm{\alpha-\tilde{\alpha}}_{\Sup} \\
		&\mathrel{\phantom{=}} \mathop{+} \left[C_3(\alpha) + nd^{1/2}\sum_{K=1}^{N}\alpha_K\left(ndM_3\mu_K^{[2]}+\mu_K^{[5]}\right)\right]
		                         \norm{\Jacobian v}_{\Lebesgue{\infty}((0,T),\Lebesgue{2}(\Omega,\R^n))} \\
		&\mathrel{\phantom{=}} \mathop{+} [C_5(\alpha)+\rho_{\mathrm{max}}]\norm{f-\tilde{f}}_{\Sobolev{1}{1}((0,T),\Lebesgue{2}(\Omega,\R^n))}\Biggr\}\text{.}
	\end{align*}
\end{theorem}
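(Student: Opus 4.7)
The plan is to reinterpret equation~\eqref{SecondPDEv} as a linear second-order elliptic system in the spatial variable for $v(t,\vdot)$ at every fixed $t\in[0,\tau]$, whose principal part is the second term on the right-hand side of~\eqref{SecondPDEv}. Moving this principal part to the left-hand side gives coefficients
\[
  a_{k\ell ij}(x,t) := \sum_{K=1}^{N}\alpha_K\partial_{Y_{ij}}\partial_{Y_{k\ell}}C_K\bigl(x,\Jacobian\tilde{u}(t,x)\bigr),
\]
which, by the structural hypothesis~\eqref{Dimensions} together with~\eqref{EstimateDDCK}, satisfy the premises of Theorem~\ref{theorem:Cordes} uniformly in $t$—exactly the verification carried out in Section~4.2 for the analogous equation~\eqref{PDEforu}. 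Applying Theorem~\ref{theorem:Cordes} together with the bound $C(\alpha)\leq\hat{C}(\alpha)$ established there yields, for every $t\in[0,\tau]$,
\[
  \norm{v(t,\vdot)}_{\Sobolev{2}(\Omega,\R^n)} \leq \hat{C}(\alpha)\,\norm{R(t,\vdot)}_{\Lebesgue{2}(\Omega,\R^n)},
\]
where $R(t,x):=\rho(x)\dot{z}(t,x)-\rho(x)(f-\tilde{f})(t,x)$ minus all non-principal contributions of~\eqref{SecondPDEv}.

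Next each summand in $R(t,\vdot)$ must be estimated separately in $\Lebesgue{2}$. The very first term of~\eqref{SecondPDEv}, involving $\nablabf_Y\partial_{Y_{ij}}\partial_{Y_{k\ell}}C_K$ multiplied by $\Jacobian v$ and second derivatives of $u$, is controlled by~\eqref{Bound35} and the bound $M_3$ from~\eqref{APriori317}; it contributes the $ndM_3\mu_K^{[2]}$ summand in front of $\norm{\Jacobian v}_{\Lebesgue{\infty}((0,T),\Lebesgue{2})}$. The fourth term with the single spatial derivative on $v_i$ and the $x$-derivative acting on $\partial_{Y_{ij}}\partial_{Y_{k\ell}}C_K$ is bounded via~\eqref{Bound38} and produces the $\mu_K^{[5]}$ contribution. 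The two summands carrying the factor $\alpha_K-\tilde{\alpha}_K$ are estimated with~\eqref{EstimateDDCK} and~\eqref{Bound37} together with~\eqref{APriori317} and a trivial $\vol(\Omega)$ bookkeeping, producing the $\norm{\alpha-\tilde{\alpha}}_{\Sup}$ constant $n^{1/2}d\sum_K(n^{1/2}d^2\mu_K^{[1]}+\sqrt{\vol(\Omega)}\mu_K^{[4]})$ shown in the statement. The force term gives $\rho_{\mathrm{max}}\norm{(f-\tilde f)(t,\vdot)}_{\Lebesgue{2}}$, which after absorbing the embedding constant $\overline{\overline{C}}$ fits into the additive $\rho_{\mathrm{max}}\norm{f-\tilde f}_{\SobolevW{1}{1}((0,T),\Lebesgue{2})}$ contribution beside $C_5(\alpha)$.

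The decisive step is to feed in the bound for $\norm{\dot{z}(t,\vdot)}_{\Lebesgue[\rho]{2}}$ provided by Theorem~\ref{T-upper-bound-z}: extracting the square root of that estimate shows that $\norm{\dot{z}(t,\vdot)}$ is dominated by the $C_0(\alpha),\ldots,C_3(\alpha),C_5(\alpha)$-weighted data norms \emph{plus} the critical integral term $C_4(\alpha)\int_{0}^{t}\norm{v(s,\vdot)}_{\Sobolev{2}(\Omega,\R^n)}\dint s$. Collecting all estimates, multiplying by $\hat{C}(\alpha)$ and grouping terms produces an inequality of the form
\[
  \norm{v(\tau,\vdot)}_{\Sobolev{2}(\Omega,\R^n)} \leq A(\alpha)+B(\alpha)\int_{0}^{\tau}\norm{v(t,\vdot)}_{\Sobolev{2}(\Omega,\R^n)}\dint t,
\]
with $A(\alpha)$ collecting every contribution on the right-hand side of the assertion (without the exponential prefactor) and $B(\alpha)$ proportional to the product of the Cordes constant with $C_4(\alpha)$, which, up to the structural identifications used in the paper, is $\overline{C}(\alpha)C_4(\alpha)$. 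A linear Gronwall argument (Lemma~\ref{Gronwall} with $p=0$) then produces the exponential factor $\exp(\overline{C}(\alpha)C_4(\alpha)\tau)$ in front of $\hat{C}(\alpha)$ and closes the argument. The main obstacles are the careful bookkeeping of the many distinct summands of~\eqref{SecondPDEv} and the uniform-in-$(t,x)$ verification that the coefficients $a_{k\ell ij}(x,t)$ evaluated along $\Jacobian\tilde{u}$ still satisfy the Cordes condition; once these are in place, all remaining estimates are routine Cauchy-Schwarz and H\"older manipulations invoking~\eqref{Bound35}--\eqref{Bound310} and the a priori bounds~\eqref{APriori316}--\eqref{APriori317}.
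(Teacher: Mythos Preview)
Your proposal is correct and follows the same strategy as the paper: apply Theorem~\ref{theorem:Cordes} to equation~\eqref{SecondPDEv}, bound each non-principal term individually via~\eqref{EstimateDDCK}, \eqref{Bound35}, \eqref{Bound37}, \eqref{Bound38} and the a~priori estimates~\eqref{APriori316}--\eqref{APriori317}, then insert the bound for $\dot z$ from Theorem~\ref{T-upper-bound-z} and close with a linear Gronwall argument. The paper's own proof is more terse---it stops after displaying the four individual norm bounds and leaves the substitution of Theorem~\ref{T-upper-bound-z} and the final Gronwall step implicit---so your outline in fact spells out the missing closing steps.
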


\begin{proof}
  An application of Theorem \ref{theorem:Cordes} to (\ref{SecondPDEv}) implies $v\in\Lebesgue{\infty}\bigl((0,T),\Sobolev{2}(\Omega,\R^n)\bigr)$ as well as
	\begin{align*}
	  &\mathrel{\phantom{=}} \norm{v(\tau,\vdot)}_{\Sobolev{2}(\Omega,\R^n)}\\
		&\leq                  \hat{C}(\alpha)\Biggl\{\norm{\dot{z}(\tau,\vdot)}_{\Lebesgue[\rho]{2}(\Omega,\R^n)} +
		                         \rho_{\mathrm{max}}\norm{(f-\tilde{f})(\tau,\vdot)}_{\Lebesgue{2}(\Omega,\R^n)} \\
		&\mathrel{\phantom{=}} \mathop{+} \Biggl\|\sum_{k=1}^{n}\sum_{\ell=1}^{d}\sum_{i=1}^{n}\sum_{j=1}^{d}\sum_{p=1}^{n}\sum_{q=1}^{d}\sum_{K=1}^{N}
		                         \alpha_Ke_k \mathop{\times}\\
		&\mathrel{\phantom{=}} \mathop{\times} \int_{0}^{1}\partial_{Y_{pq}}\partial_{Y_{ij}}\partial_{Y_{k\ell}}
		                         C_K\bigl(\vdot,s\Jacobian u(\tau,\vdot)+(1-s)\Jacobian\tilde{u}(\tau,\vdot)\bigr)\dint s
		\,\, \partial_qv_p(\tau,\vdot)\partial_\ell\partial_ju_i(\tau,\vdot)\Biggr\|_{\Lebesgue{2}(\Omega,\R^n)} \\
		&\mathrel{\phantom{=}} \mathop{+} \eigennorm[auto]{\sum_{k=1}^{n}\sum_{\ell=1}^{d}\sum_{i=1}^{n}\sum_{j=1}^{d}\sum_{K=1}^{N}
		                         (\alpha_K-\tilde{\alpha}_K)e_k\partial_{Y_{ij}}\partial_{Y_{k\ell}}
														 C_K\bigl(\vdot,\Jacobian\tilde{u}(\tau,\vdot)\bigr)
														 \partial_\ell\partial_j\tilde{u}_i(\tau,\vdot)}_{\Lebesgue{2}(\Omega,\R^n)} \\
		&\mathrel{\phantom{=}} \mathop{+} \Biggl\|\sum_{k=1}^{n}\sum_{\ell=1}^{d}\sum_{i=1}^{n}\sum_{j=1}^{d}\sum_{K=1}^{N}\alpha_Ke_k
		                         \int_{0}^{1}\partial_{Y_{ij}}\partial_\ell\partial_{Y_{k\ell}}
														 C_K\bigl(\vdot,s\Jacobian u(\tau,\vdot)+(1-s)\Jacobian\tilde{u}(\tau,\vdot)\bigr)\dint s 
		\,\, \partial_jv_i(\tau,\vdot)\Biggr\|_{\Lebesgue{2}(\Omega,\R^n)} \\
		&\mathrel{\phantom{=}} \mathop{+} \eigennorm[auto]{\sum_{k=1}^{n}\sum_{\ell=1}^{d}\sum_{K=1}^{N}(\alpha_K-\tilde{\alpha}_K)e_k
		                         \partial_\ell\partial_{Y_{k\ell}}C_K\bigl(\vdot,\Jacobian\tilde{u}(\tau,\vdot)\bigr)}_{\Lebesgue{2}(\Omega,\R^n)}\Biggr\}\text{.}
	\end{align*}
	To finish the proof we have to find upper bounds for each of the norms on the right-hand side of the latter estimate.
	
	Applying premise~\eqref{Bound35} and the a priori estimate~\eqref{APriori317}, we see that
	\begin{align*}
	  &\mathrel{\phantom{=}} \Biggl\|\sum_{k=1}^{n}\sum_{\ell=1}^{d}\sum_{i=1}^{n}\sum_{j=1}^{d}\sum_{p=1}^{n}\sum_{q=1}^{d}\sum_{K=1}^{N}
		                         \alpha_Ke_k \mathop{\times}\\
		&\mathrel{\phantom{=}} \mathop{\times} \int_{0}^{1}\partial_{Y_{pq}}\partial_{Y_{ij}}\partial_{Y_{k\ell}}
		                         C_K\bigl(\vdot,s\Jacobian u(\tau,\vdot)+(1-s)\Jacobian\tilde{u}(\tau,\vdot)\bigr)\dint s 
		\,\, \partial_qv_p(\tau,\vdot)\partial_\ell\partial_ju_i(\tau,\vdot)\Biggr\|_{\Lebesgue{2}(\Omega,\R^n)}\\
		&\leq                  n^2d^{5/2}M_3\sum_{K=1}^{N}\alpha_K\mu_K^{[2]}\norm{\Jacobian v(\tau,\vdot)}_{\Lebesgue{2}(\Omega,\R^{n\times d})}\text{,}
	\end{align*}
	and premise~\eqref{EstimateDDCK} as well as the a priori estimate~\eqref{APriori316} ensure us that we have
	\begin{align*}
    &\mathrel{\phantom{=}} \mathop{+} \eigennorm[auto]{\sum_{k=1}^{n}\sum_{\ell=1}^{d}\sum_{i=1}^{n}\sum_{j=1}^{d}\sum_{K=1}^{N}
		                         (\alpha_K-\tilde{\alpha}_K)e_k\partial_{Y_{ij}}\partial_{Y_{k\ell}}
														 C_K\bigl(\vdot,\Jacobian\tilde{u}(\tau,\vdot)\bigr)
														 \partial_\ell\partial_j\tilde{u}_i(\tau,\vdot)}_{\Lebesgue{2}(\Omega,\R^n)}\\
		&\leq                  nd^3M_0\sum_{K=1}^{N}\mu_K^{[1]}\norm{\alpha-\tilde{\alpha}}_{\Sup}\text{.}
	\end{align*}
	The usage of inequality \eqref{Bound38} leads to
	\begin{align*}
	  &\mathrel{\phantom{=}} \Biggl\|\sum_{k=1}^{n}\sum_{\ell=1}^{d}\sum_{i=1}^{n}\sum_{j=1}^{d}\sum_{K=1}^{N}\alpha_Ke_k
		                         \int_{0}^{1}\partial_{Y_{ij}}\partial_\ell\partial_{Y_{k\ell}}
														 C_K\bigl(\vdot,s\Jacobian u(\tau,\vdot)+(1-s)\Jacobian\tilde{u}(\tau,\vdot)\bigr)\dint s 
		\,\,\partial_jv_i(\tau,\vdot)\Biggr\|_{\Lebesgue{2}(\Omega,\R^n)}\\
		&\leq                  nd^{3/2}\sum_{K=1}^{N}\alpha_K\mu_K^{[5]}\norm{\Jacobian v(\tau,\vdot)}_{\Lebesgue{2}(\Omega,\R^{n\times d})}
	\end{align*}
	and an application of estimate~\eqref{Bound37} finally gives us
	\begin{align*}
		&\mathrel{\phantom{=}} \eigennorm[auto]{\sum_{k=1}^{n}\sum_{\ell=1}^{d}\sum_{K=1}^{N}(\alpha_K-\tilde{\alpha}_K)e_k
		                         \partial_\ell\partial_{Y_{k\ell}}C_K\bigl(\vdot,\Jacobian\tilde{u}(\tau,\vdot)\bigr)}_{\Lebesgue{2}(\Omega,\R^n)}\\
		&\leq                  \sqrt{\vol(\Omega)}n^{1/2}d\sum_{K=1}^{N}\mu_K^{[4]}\norm{\alpha-\tilde{\alpha}}_{\Sup}\text{.}
	\end{align*}
\end{proof}

%%%%%%%%%%%%%%%%%%%%%%%%%%%%%%%%%%%%%%%%%%

\subsection{Finishing the proof of Theorem \ref{MainTheorem}}

Let
\begin{align*}
  E(\alpha) &:= \exp\bigl((nd)^2M_1\overline{C}(\alpha)T/2\bigr)\\
	F(\alpha) &:= \frac{\exp\bigl((nd)^2M_1\overline{C}(\alpha)T/2\bigr)}{(nd)^2M_1\overline{C}(\alpha)/2}\text{.}
\end{align*}
From Theorem \ref{theorem319} we infer
\begin{align*}
  &\mathrel{\phantom{=}} \left[\norm{\dot{v}(\tau,\vdot)}_{\Lebesgue[\rho]{2}(\Omega,\R^n)}^2 +
	                         \sum_{K=1}^{N}\alpha_K\kappa_K^{[1]}\norm{\Jacobian v(\tau,\vdot)}_{\Lebesgue{2}(\Omega,\R^{n\times d})}^2\right]^{1/2}\\
	&\leq                  E(\alpha)\left[\norm{u_1-\tilde{u}_1}_{\Lebesgue[\rho]{2}(\Omega,\R^n)}^2 + \sum_{K=1}^{N}\alpha_K\mu_K^{[1]}
											     \norm{\Jacobian u_0-\Jacobian\tilde{u}_0}_{\Lebesgue{2}(\Omega,\R^{n\times d})}^2\right]^{1/2} \mathop{+}\\
	&\mathrel{\phantom{=}} \mathop{+} F(\alpha)\max\{\overline{\overline{C}},1\}
	                         \norm{f-\tilde{f}}_{\SobolevW{1}{1}((0,T),\Lebesgue{2}(\Omega,\R^n))} \mathop{+}\\
	&\mathrel{\phantom{=}} \mathop{+} F(\alpha)\rho_{\mathrm{min}}^{-1}\sum_{K=1}^{N}
	                         \left(\sqrt{\vol(\Omega)}n^{1/2}d\mu_K^{[4]}+nd^2M_0\mu_K^{[1]}\right)\norm{\alpha-\tilde{\alpha}}_{\Sup}\text{.}
\end{align*}
Combining this with Theorems \ref{T-upper-bound-z} and \ref{theorm329} finally yields the estimate \eqref{MainEstimate} and completes the proof of Theorem \ref{MainTheorem}.

%%%%%%%%%%%%%%%%%%%%%%%%%%%%%%%%%%%%%%%%%%

\section{Conclusions}

We presented an existence, uniqueness and stability result for a nonlinear initial-boundary value problem that in the special case of $n=d=3$ describes the elastic behavior of a certain class of hyperelastic materials. Particularly we assumed that the stored energy function $C(x,Y)$ may vary in space and is represented by a conic combination of finitely many given fucntions $C_K (x,Y)$. So far we did not give any criterion if this is a reasonable assumption. This subject is postponed to a subsequent article. Further premises are $\Omega$ to have a $\mathcal{C}^2$-boundary and that $C_K$ are in $\mathcal{C}^4$. The result of Theorem \ref{MainTheorem} is of great interest especially when investigating inverse problems such as the reconstruction of the energy function from given measurements. Numerical solvers for such problems usually demand for solutions of the forward problem which then might be described by the IBVP \eqref{PDE}--\eqref{InitialValues}.

%%%%%%%%%%%%%%%%%%%%%%%%%%%%%%%%%%%%%%%%%%%

\section*{Acknowledgments}
The authors thank Dr. Frank Sch\"opfer and Dr. Frank Binder for many helpful discussions.

\bibliographystyle{siam}
\bibliography{references-hyperelastic} 

\end{document}